\newtheorem{remark}{Remark}
\newtheorem{exmp}{Example}[section]
\newtheorem{defi}{Definition}[section]
\crefname{exmp}{Example}{Examples}
\newcommand{\xset}{\mathbb{X}}
\newcommand{\yset}{\mathbb{Y}}
\newcommand{\permut}{\sigma}
\newcommand{\type}[1]{\mathcal{T}_\mathcal{#1}}
\newcommand{\tset}{T}
\newcommand{\stirst}[2]{\mathcal{S}_{1}(#1,#2)}
\newcommand{\stirstg}[3]{\mathcal{S}^{#1}_{1}(#2,#3)}
\newcommand{\stirnd}[2]{\mathcal{S}_{2}(#1,#2)}
\title{Decomposition of admissible functions in weighted coupled cell networks\thanks{First submitted to the editors at 12 of January of 2022. Revised version submited at 24 of August of 2022. Accepted at 9 of November of 2022.
		\funding{This work was supported in part by the FCT Project RELIABLE (PTDC/EEI-AUT/3522/2020), funded by FCT/MCTES and in part by the National Science Foundation under Grant No.~ECCS-2029985. The work of P. Sequeira was supported by a Ph.D. Scholarship, grant SFRH/BD/119835/2016 from Fundação para a Ciência e a Tecnologia (FCT), Portugal (POCH program).}}}
\author{Pedro Sequeira\thanks{Faculdade de Engenharia, Universidade do Porto, Portugal (\email{pedro.sequeira@fe.up.pt}, \email{pedro.aguiar@fe.up.pt}).}
	\and João P. Hespanha\thanks{Department of Electrical and Computer Engineering, University of California, Santa Barbara, CA (\email{hespanha@ece.ucsb.edu}).}
	\and A. Pedro Aguiar\footnotemark[2]
}
\begin{document}

\maketitle

\begin{abstract}
This work makes explicit the degrees of freedom involved in modeling the dynamics of a network, or some other first-order property of a network, such as a measurement function. 
In previous work, an admissible function in a network was constructed through the evaluation of what we called oracle components. These oracle components are defined through some minimal properties that they are expected to obey. This is a high-level description in the sense that it is not clear how one could design such an object. The goal is to obtain a low-level representation of these objects by unwrapping them into their degrees of freedom. To achieve this, we introduce two decompositions. The first one is the more intuitive one and allows us to define the important concept of coupling order. The second decomposition is built on top of the first one and is valid for the class of coupling components that have finite coupling order. Despite this requirement, we show that this is still a very useful tool for designing coupling components with infinite coupling orders, through a limit approach.
\end{abstract}
\begin{keywords}
	Coupled cell networks, Admissibility, Function decomposition, Stirling numbers
\end{keywords}
\begin{AMS}
	34A34, 41A63, 11B73
\end{AMS}
\section{Introduction}
Networks are ubiquitous structures, be it either in the natural world or in engineering. In order to study dynamical systems associated with such structures, the groupoid formalism of \textit{coupled cell networks} (CCN) was introduced in 
\cite{stewart2003symmetry,golubitsky2005patterns,golubitsky2006nonlinear}. Here, the concept of \textit{admissibility} was defined through the minimal properties that a function must obey in order for it to be a plausible modeling of the dynamics (or some other first-order property) of a network. Here, ``first-order" means that we are modeling something that, when evaluated at cell, depends on the state of that cell and its in-neighbors. This does not mean that everything on a network has to (or can) be defined by such a function. For instance, the second derivative or the two-step evolution of the mentioned dynamical systems will not be of this form. Those functions will be ``second-order" in the sense that they are dependent on their first and second in-neighborhoods (neighbor of neighbor). They are, however, fully defined from the original first-order functions.
This concept of order should not be confused with ``coupling order", which is the focus of this work and is something entirely different.\\
Although the groupoid formalism is general enough to cover admissible functions with more complex structure, many important models are usually given by very simple dynamical functions, such as being ``additive in the edges/weights" or being ``weakly coupled".
The Kuramoto model \cite{arenas2008synchronization}, \cite{dorfler2014synchronization}, \cite{rodrigues2016kuramoto}, is one of the most predominant models in the study of synchrony of oscillators and, although it has many variants, it is often assumed to have this simple structure.\\
Nevertheless, the importance of studying systems with higher order couplings has been recognized \cite{battiston2021physics}, as reviewed in \cite{bick2021higher,battiston2020networks}.\\
Refer for instance to \cite{memmesheimer2012non}, in which it is experimentally observed that changing additive coupling dynamics to non-additive can enable persistent synchrony, with this phenomenon appearing even in random networks, with no pre-constructed graph structure that would justify the existence of synchronism. The inability of the additive coupling system to exhibit such a feature might mean that such a system is, in some sense, degenerate.\\
This has lead to many works that extend the concept of network, such as hypernetworks \cite{aguiar2022network} and simplicial complexes
\cite{nijholt2022dynamical}.\\
The generalization of networks into more complex, higher dimensional structures is justified through the objective of constructing admissible functions that have non-pairwise terms. While it is true that these structures allow for this type of terms, this is not a requirement. That is, standard networks are perfectly capable of having higher order, non-pairwise terms, although they are constrained in a very particular way, as we show in this work.\\
There have been some extensions of the Kuramoto model to higher coupling orders \cite{aguiar2018synchronization,ashwin2016hopf,bick2016chaos}. While these functions certainty are invariant to permutations (\cref{thm:oracle_permut} of \cref{defi:oracle}) and dependent only on the cell of reference and  its in-neighbors (\cref{thm:oracle_0_equal} of \cref{defi:oracle}), it is not clear whenever they follow the edge-merging principle (\cref{thm:oracle_merge} of \cref{defi:oracle}), which is a very strong constraint. Note that this last condition was already present in the original groupoid formalism and it is essential in order to properly define quotient networks. The weighted formalism used here only makes it more explicit.\\
The groupoid formalism has previously been  applied to the case of networks with weighted edges \cite{aguiar2017patterns}, where the weights could assume real values. However, here the admissible functions were considered to only admit a very simple additive in the weights structure, which is far too restrictive for our purposes.
For this reason, we use the formalism for general weighted CCNs recently introduced in \cite{sequeira2021commutative}, which is a proper generalization of the groupoid formalism. This formalism uses the algebraic structure of the commutative monoid to deal with arbitrary edge sets. This is the minimal structure, with the necessary symmetry properties, that is able to encode finite edges in parallel.\\
Much more important than the extension to general weights, is the development of the concept of oracle components.
An oracle component is a mathematical object that describes how cells of a given type respond to arbitrary finite in-neighborhoods. It completely separates the modeling of the behavior of cells from the particular network on which the cells of interest are inserted.\\
Then, to specify an admissible function on a CCN, which models its dynamics (or an output function in general), we just need  to choose a tuple of oracle components (one for each cell type), which is called an oracle function. The admissible function is then obtained by simply evaluating on each cell, together with its corresponding in-neighborhood, the appropriate oracle component.\\
Note that the oracle component is a much preferable mathematical object to work with than the admissible function. That is, in order to study a function, we would rather know it completely than just knowing its value when evaluated at some points. In particular, despite the fact that in most applications we might not have to deal with cells that have arbitrarily large in-neighborhoods, it proves essential for the oracle components to be properly defined in such cases.\\
The oracle components are defined through a set of equality constraints to itself. Although in this work we define them in a simpler way than in \cite{sequeira2021commutative}, it is still a high-level description in the sense that it is not clear how one could construct such an object. In this work, we build on top of this formalism with the intent of dissecting the mathematical objects that are the oracle components by making explicit their degrees of freedom.\\
To the given set of cell types $ \tset $, we associate the state sets $ \{\xset_i\}_{i\in\tset} $, which are used to construct the domains of oracle components and the output sets $ \{\yset_i\}_{i\in\tset} $ which are the corresponding codomains. This work requires $ \{\yset_i\}_{i\in\tset} $ to be vector spaces in order for the decompositions to be well defined. Note that this is still fairly general. In particular, we might consider the state sets $ \{\xset_i\}_{i\in\tset} $ to be manifolds and the output sets to be their tangents spaces, that is, $ \yset_i = T_p\xset_{i} $, which are indeed vector spaces. Therefore, we can apply these results to spaces other than $ \mathbb{R}^n $, such as spaces involving angles (torus) in the study of oscillators.\\
Furthermore, we always assume that the scalar fields associated with the vector spaces contain at least the rational numbers. This covers, for instance, real and complex vector spaces, which are the most commonly used ones. Some results involve convergence and require output sets to have a topology defined on them. These topologies need very tame assumptions, such as being Hausdorff. Note that in this work some assumptions could be weakened. For instance, \cref{sec:decomp_coupl_comp} only requires additions and subtractions, which means that we could use commutative (often called abelian) groups instead of vector spaces. However, we prefer to have these results in their vector space form in order to apply them easily in \cref{sec:decomp_basis_comp}.\\
In \cref{sec:new_formalism} we provide the necessary background for understanding the rest of the work by summarizing the commutative monoid formalism for general weighted CCNs, which we present in a simpler form than in the original paper \cite{sequeira2021commutative}.\\
In \cref{sec:decomp_coupl_comp} we study what we call a coupling decomposition. We decompose an oracle component $ \hat{f}_i $ by establishing a bijection between it and an infinite family of functions $ \{f_i^{\mathbf{k}}\}_{\mathbf{k} \geq\mathbf{0}_{\vert\tset\vert}} $, called coupling components. That is, they are equivalent representations of the same object. This does not yet solve the problem of understanding the degrees of freedom involved in modeling oracle components $ \hat{f}_i $, since the resulting coupling components $ \{f_i^{\mathbf{k}}\}_{\mathbf{k} \geq\mathbf{0}_{\vert\tset\vert}}  $ are all related to each other. It is, however, an essential first step. In particular, this decomposition allows us to define the important concept of coupling order.\\ 
In \cref{sec:decomp_basis_comp} we show that for oracle components with (arbitrary) finite coupling order, we can establish a bijection between $ \{f_i^{\mathbf{k}}\}_{\mathbf{k} \geq\mathbf{0}_{\vert\tset\vert}} $ and a family of simpler functions $ \{\leftidx{^b}{}f_i^{\mathbf{k}}\}_{\mathbf{k} \geq \mathbf{0}_{\vert\tset\vert}} $, with finite support, called basis components. More importantly, the functions in $ \{\leftidx{^b}{}f_i^{\mathbf{k}}\}_{\mathbf{k} \geq \mathbf{0}_{\vert\tset\vert}} $ are all decoupled from one another. From composition of bijections, this implies that every oracle component $ \hat{f}_i $ (with finite coupling order) corresponds to exactly one set of basis components $ \{\leftidx{^b}{}f_i^{\mathbf{k}}\}_{\mathbf{k} \geq \mathbf{0}_{\vert\tset\vert}} $ (with finite support), which have a simple structure and are decoupled. That is, we have successfully exposed the degrees of freedom involved in modeling oracle components (with finite coupling order).\\
Although this decomposition only applies to (arbitrary) finite coupling order, we show that it is a very useful tool for designing oracle components with infinite coupling order. In particular, we can build a valid $ \hat{f}_i $ with infinite coupling order by taking the limit of a sequence of oracle components with finite order. The relationships between the oracle components and the two decompositions is illustrated, with the relevant equations, in the diagram of \cref{fig:decomposition_scheme}.
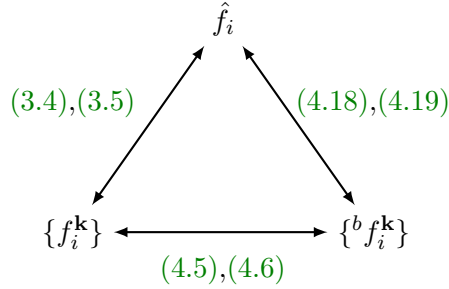
\begin{figure}[h]
	\centering
	\begin{tikzpicture}[
	node1/.style = {},
	edge1/.style = {>=latex,thick},
	]
	\def\scale{2.0}
	\node[node1] at (0,0)(n1){$ \{f_i^{\mathbf{k}}\} $};
	\node[node1] at (2*\scale,0)(n2){$ \{\leftidx{^b}{}f_i^{\mathbf{k}}\} $};
	\node[node1] at (1*\scale,{sqrt(2)*\scale})(n3){$ \hat{f}_i $};

	\draw [<->,edge1](n1) -- (n2);
	\draw [<->,edge1](n1) -- (n3);
	\draw [<->,edge1](n2) -- (n3);
	
	\node (f2fk) at ($(n1)!0.5!(n3) + (-1.0,0.3)$) {\cref{eq:decomp_explicit},\cref{eq:composition_f}};

	\node (fk2bfk) at ($(n1)!0.5!(n2) + (0.0,-0.5)$) {\cref{eq:component_from_basis1_inf},\cref{eq:basis_from_component1_inf}};

	\node (f2fk) at ($(n2)!0.5!(n3) + (+1.0,0.3)$) {\cref{eq:oracle_from_basis1_inf},\cref{eq:basis1_inf_from_oracle}};
	\end{tikzpicture} 
	\caption{Decomposition diagram.}
	\label{fig:decomposition_scheme}
\end{figure}
\section{Weighted multi-edge CCNs} \label{sec:new_formalism}
In this section we briefly introduce the necessary concepts developed in \cite{sequeira2021commutative} on top of which this work is built.
\subsection{Commutative monoids}
The commutative monoid is a set equipped with a binary operation (usually denoted $ + $) such that it is commutative and associative. Furthermore, it has one identity element (usualy denoted $ 0 $). This is the simplest algebraic structure that can be used to describe arbitrary finite parallels of edges. Note that associativity and commutativity, together, are equivalent to the invariance to permutations property.\\
In this work, the ``sum'' operation is denoted by $ \| $, with the meaning of ``adding in parallel''. In this context, the zero element of a monoid should be interpreted as ``no edge''.
Note that we do not require the existence of inverse elements. That is, given an edge, there does not need to exist another one such that the two in parallel act as ``no edge". This is the reason for the use of monoids instead of the algebraic structure of groups.
\subsection{Multi-indexes}
In this work we require the use of multi-index notation. A multi-index is an ordered $ n $-tuple of non-negative integers (indexes). That is, an element of $ \mathbb{N}_0^n $. Two particularly important multi-indexes are $ \mathbf{0}_n $ and $ \mathbf{1}_n $, which represent the tuple of $ n $ zeros and the tuple of $ n $ ones, respectively. Furthermore, we denote by $ 1_j $ the tuple such that its $ j^{th} $ entry is $ 1 $ and all the others are zero.\\
We will denote the multi-indexes with the same notation we use for vectors, using bold, as in $ \mathbf{k} = \left[k_1, \ldots. k_n\right]^{\top}$. Their norm is defined as $ \vert \mathbf{k} \vert := \sum_{i=1}^{n} k_i $.\\
The elements (of the same tupleness $ n $) can be multiplied by non-negative integers and added together freely, although subtraction and division are not always well-defined. For instance,
\begin{align*}
\mathbf{k} = 2 \, \mathbf{1}_3 + 3 \, 1_2 
=
\begin{bmatrix}
2\\
5\\
2
\end{bmatrix} 
\end{align*}
(note the difference between bold and non-bold).\\
The multi-indexes (of the same tupleness $ n $) form a partial order in the straightforward way, that is, $ \mathbf{k^1}\geq \mathbf{k^2} $ if and only if $ k^1_i \geq k^2_i $ for every entry $ 1 \leq i \leq n $. Note that for $ n>1 $ the order is partial since neither $ \mathbf{k^1}\geq \mathbf{k^2} $ nor $ \mathbf{k^1}\leq \mathbf{k^2} $ are required. This happens when there are $ 1 \leq i,j \leq n $ such that $ k^1_i > k^2_i $ and $ k^1_j < k^2_j $. In this case we say that the pair $(\mathbf{k}_1, \mathbf{k}_2 )$ is non-comparable.\\
We will often specify the tupleness $ n $ of a multi-index $ \mathbf{k} $ indirectly, by using $ \mathbf{k}\geq \mathbf{0}_n  $ in order to denote $ \mathbf{k}\in\mathbb{N}_0^n $, or $ \mathbf{k}\geq \mathbf{1}_n  $ to denote $ \mathbf{k}\in\mathbb{N}^n $.
\subsection{CCN formalism} \label{subsec:CCN_formalism}
According to \cite{sequeira2021commutative}, a general weighted coupled cell network is given by the following definition.
\begin{defi}
	A network $ \mathcal{G} $ consists of a set of cells $ \mathcal{C}_{\mathcal{G}} $, where each cell has a type, given by an index set $ \tset=\{1, \ldots, \vert\tset\vert\} $ according to $ \type{G}\colon\ \mathcal{C}_{\mathcal{G}}\to\tset $ and has an \mbox{$ \vert\mathcal{C}_{\mathcal{G}}\vert\times\vert\mathcal{C}_{\mathcal{G}}\vert $} \mbox{in-adjacency} matrix $ M_{\mathcal{G}} $. The entries of $ M_{\mathcal{G}} $ are elements of a family of commutative monoids $ \{\mathcal{M}_{ij}\}_{i,j\in\tset} $ such that $ \left[M_{\mathcal{G}}\right]_{cd} = m_{cd}\in\mathcal{M}_{ij} $, for any cells $ c,d \in \mathcal{C}_{\mathcal{G}} $ with types $ i =\type{G}(c) $, $ j =\type{G}(d) $. 
	\hfill$ \square $
\end{defi}
For each commutative monoid $ \mathcal{M}_{ij} $ we denote its ``zero'' element as $ 0_{ij} $. 
\begin{remark}
	The subscripts $ _\mathcal{G} $ are omitted when the network of interest is clear from context. 
	\hfill$ \square $
\end{remark}
\subsection{Admissibility}\label{sec:Gadmissibility}
A function used to model some first-order property of a network, such as its dynamics, is admissible if it respects the minimal properties that we expect from it. Consider the simple network of \cref{fig:edge_merginga}, (which could be part of a larger network) consisting of cell $ c $ and its in-neighborhood. 
\begin{figure}[h]
	\centering
	\begin{subfigure}[t]{0.23\textwidth}
		\centering
		\begin{tikzpicture}[
node1/.style = {circle,minimum size=23,draw},
node2/.style = {circle,minimum size=23,draw,fill=white!75!black},
node3/.style = {circle,minimum size=23,draw,fill=white!50!black},
noderect/.style = {rectangle,minimum size=20,draw},
edge1/.style = {>=latex,thick},
edgedash/.style = {>=latex,thick,dashed},
edge2/.style = {>=latex,thick,blue},
edge3/.style = {>=latex,thick,red}
]
\node[noderect] at (-0.75,2)(n1){$ x_a $};
\node[noderect] at (+0.75,2)(n2){$ x_b $};
\node[node1] at (0,0)(n3){$ x_c $};

\draw [->,edge1](n1) -- (n3);
\draw [->,edge1](n2) -- (n3);

\node (w13) at ($(n1)!0.5!(n3) + (-0.4,-0.0)$) {$ w_a $};
\node (w23) at ($(n2)!0.5!(n3) + (+0.4,-0.0)$) {$ w_b $};


\end{tikzpicture} 
		\caption{Original.}
		\label{fig:edge_merginga}
	\end{subfigure}
	\begin{subfigure}[t]{0.23\textwidth}
		\centering
		\begin{tikzpicture}[
node1/.style = {circle,minimum size=23,draw},
node2/.style = {circle,minimum size=23,draw,fill=white!75!black},
node3/.style = {circle,minimum size=23,draw,fill=white!50!black},
noderect/.style = {rectangle,minimum size=20,draw},
edge1/.style = {>=latex,thick},
edgedash/.style = {>=latex,thick,dashed},
edge2/.style = {>=latex,thick,blue},
edge3/.style = {>=latex,thick,red}
]

\node[noderect] at (0,2)(n3){$ x_a=x_b $};
\node[node1] at (0,0)(n12){$ x_c $};

\draw [->,edge1](n3) -- (n12);
\node (w13) at ($(n3)!0.5!(n12) + (+0.8,-0.0)$) {$ w_a \| w_b $};


\end{tikzpicture} 
		\caption{Merged.}
		\label{fig:edge_mergingb}
	\end{subfigure}
	\centering
	\caption{Edge merging.}
	\label{fig:edge_merging}
\end{figure}
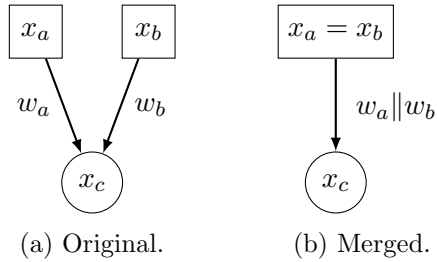
We have cell types $ \tset =\{1,2\} $ which represent ``circle" and ``square" cells, respectively. In order to define functions on the cells we associate with them the state sets $ \xset_{1},\xset_{2} $ and the output sets $ \yset_{1},\yset_{2} $ according to their respective type.\\
We consider that the input received by a cell is independent of how we draw the network, that is, from the point of view of cell $ c $, there would be no difference if cell $ b $ was at the left of cell $ a $. Then, for a function $ \hat{f}_1 $ acting on cells of type $ 1 $, we would expect that
\begin{align*}
\hat{f}_1\left(x_c; \begin{bmatrix}w_a \\ w_b\end{bmatrix},
\begin{bmatrix}x_a \\ x_b\end{bmatrix}\right) 
=
\hat{f}_1\left(x_c; \begin{bmatrix}w_b \\ w_a\end{bmatrix},
\begin{bmatrix}x_b \\ x_a\end{bmatrix}\right), 
\end{align*}
for $ x_c\in\xset_{1} $, $ x_a,x_b\in\xset_{2} $ and $ w_a,w_b\in \mathcal{M}_{12}$. 
Moreover, since cells $ a $ and $ b $ are of the same cell type (square) ($ \type{}(a) = \type{}(b) = 2 $), we expect that when they are in the same state ($ x_a=x_b =x_{ab}$), the total input received by cell $ c $ at that instant, is the same as if both edges originated from a single ``square" cell with that state, as in \cref{fig:edge_mergingb}. That is,
\begin{align*}
\hat{f}_1\left(x_c; \begin{bmatrix}w_a \\ w_b\end{bmatrix},
\begin{bmatrix}x_{ab} \\ x_{ab}\end{bmatrix}\right) 
=
\hat{f}_1\left(x_c;w_a \| w_b, x_{ab} \right). 
\end{align*}
Although this might look inconsistent since the domains look mismatched, the following definition formalizes it in a rigorous way. 
Finally, when $ \hat{f}_1 $ is evaluated at a cell it should only depend on the in-neighborhood of that cell. Therefore, if $ w_a = 0_{12} $, cell $ c $ should not be directly influenced by cell $ a $. That is,
\begin{align*}
\hat{f}_1\left(x_c; \begin{bmatrix}0_{12} \\ w_b\end{bmatrix},
\begin{bmatrix}x_a \\ x_b\end{bmatrix}\right) 
=
\hat{f}_1\left(x_c; w_b,
x_b\right). 
\end{align*}
These ideas are now formalized in the following definition, which is a simpler but equivalent version of the one in \cite{sequeira2021commutative}.
\begin{defi} \label{defi:oracle}
	Consider the set of cell types $ \tset $, and some related sets $ \{\xset_j, \yset_j\}_{j\in\tset} $ together with a family of commutative monoids $ \{\mathcal{M}_{ij}\}_{j\in\tset} $, for a given fixed $ i\in\tset $. Take a function $ \hat{f}_i $ defined on
	\begin{align}
	\hat{f}_i\colon \xset_i\times\bigcup^\circ_{\mathbf{k} 
		\geq \mathbf{0}_{\vert\tset\vert}} \left( \mathcal{M}_i^{\mathbf{k}} \times \xset^{\mathbf{k}} \right)  \to \yset_i,
	\label{eq:oracle_domain_func}
	\end{align}
	where $ \bigcup\limits^\circ $ denotes the disjoint union and for multi-index $ \mathbf{k} $ we define $ \xset^{\mathbf{k}} := \xset_1^{k_1} \times \ldots \times \xset_{\vert\tset\vert}^{k_{\vert\tset\vert}} $ and $ \mathcal{M}_i^{\mathbf{k}} := \mathcal{M}_{i1}^{k_1} \times \ldots \times \mathcal{M}_{i\vert\tset\vert}^{k_{\vert\tset\vert}} $.\\
	The function $ \hat{f}_i $ is called an \textbf{oracle component of type i}, if it has the following properties:
	\begin{enumerate}
		\item \label{thm:oracle_permut}
		If $ \permut $ is a \textbf{permutation} matrix (of appropriate dimension), then
		\begin{align}
		\label{eq:oracle_permut}
		\hat{f}_i(x;\mathbf{w},\mathbf{x})
		=
		\hat{f}_i(x;\permut{\mathbf{w}},\permut{\mathbf{x}}),
		\end{align}
		where we assume, without loss of generality, that one can keep track of the cell types of each element of $ \permut{\mathbf{w}} $ and $ \permut{\mathbf{x}} $.
		\item
		\label{thm:oracle_merge}
		If the indexes $ j_1 $, $j_2 $ and $ j_{12} $ denote cells of type $ j \in\tset$, then
		\begin{align}
		\label{eq:oracle_merge}
		\hat{f}_i
		\left(x; 
		\begin{bmatrix}
		w_{j_1} \| w_{j_2}\\ 
		\mathbf{w}
		\end{bmatrix},
		\begin{bmatrix}
		x_{j_{12}} \\ 
		\mathbf{x}
		\end{bmatrix}
		\right)
		=
		\hat{f}_i
		\left(x; 
		\begin{bmatrix}
		w_{j_1} \\ 
		w_{j_2} \\
		\mathbf{w}
		\end{bmatrix},
		\begin{bmatrix}
		x_{j_{12}} \\ 
		x_{j_{12}} \\ 
		\mathbf{x}
		\end{bmatrix}
		\right).
		\end{align}
		\item
		\label{thm:oracle_0_equal}
		If $ \mathbf{w} $ has its $ k^{th} $ element (corresponding to cell $ c_k $) equal to $ 0_{ij} $, with $ j = \type{}(c_k) $, then 
		\begin{align}
		\hat{f}_i(x;\mathbf{w},\mathbf{x}) = 
		\hat{f}_i(x;\mathbf{w}_{-k},\mathbf{x}_{-k}),
		\label{eq:oracle_0_equal}
		\end{align}
		where $ \mathbf{w}_{-k} $, $ \mathbf{x}_{-k} $ denotes the result of removing the $ k^{th} $ element of the original vectors $ \mathbf{w} $, $ \mathbf{x} $.
	\end{enumerate}
	\hfill$ \square $
\end{defi}
The disjoint union allows us to distinguish neighborhoods of different types, that is, the set $ \xset_1\times\xset_1 $ is always taken as a different set from $ \xset_1\times\xset_2 $ even in the particular case of $ \xset_1 = \xset_2 $.	
\begin{remark}
	As stated in \cref{thm:oracle_permut} of \cref{defi:oracle}, it is always assumed that given any weight $ w_c $ or state $ x_c $, we always know the cell type of the corresponding cell $ c $. Note that one can always do enough bookkeeping in order to ensure this. For instance, one can extend $ \hat{f}_i(x;\mathbf{w},\mathbf{x}) $ into $ \hat{f}_i(x;\mathbf{t},\mathbf{w},\mathbf{x}) $, where $ \mathbf{t} $ would be a vector that encodes the cell types associated with $ \mathbf{w},\mathbf{x} $. Then, we would have $ \hat{f}_i(x;\mathbf{t},\mathbf{w},\mathbf{x})=\hat{f}_i(x;\permut{\mathbf{t}},\permut{\mathbf{w}},\permut{\mathbf{x}}) $ instead.\\
	Our implicit bookkeeping means that we do not have to constrain $ \permut $ to preserve cell typing. That is, if we assume some canonical order of the cell types in the part of the domain $ \mathcal{M}_i^{\mathbf{k}} \times \xset^{\mathbf{k}} $ in \cref{eq:oracle_domain_func}, then we know the correct $ \mathbf{k}	\geq \mathbf{0}_{\vert\tset\vert} $ and can reorder the rows of $ \mathbf{w} $ and $ \mathbf{x} $ in $ \hat{f}_i(x;\mathbf{w},\mathbf{x}) $ appropriately.\\
	Note that by considering invariance under general permutations, and not having to worry about preserving cell types or respecting some canonical ordering of cell types, we are always able to shift the cells of major interest to the top of the vectors, as in \cref{eq:oracle_merge}, regardless of the types of other cells. This is used throughout the paper and it allows us to make our statements and proofs more manageable.
	\hfill$ \square $
\end{remark}
In \cref{defi:oracle} we slightly changed the original notation by interpreting $ \mathbf{w} \in \mathcal{M}_i^{\mathbf{k}} $ as a column vector instead of a row. This is merely cosmetic but it makes this work more clear.\\
The \textbf{oracle set} is the set of all $ \vert \tset\vert $-tuples of oracle components, such that each element of the tuple represents one of the types in $ \tset $. It is denoted as
\begin{align*}
\hat{\mathcal{F}}_{\tset} = \prod_{i\in \tset}\hat{\mathcal{F}}_{i} ,
\end{align*}
where $ \hat{\mathcal{F}}_{i} $ is the set of all oracle components of type $ i $. We are always implicitly assuming sets $ \{\xset_i, \yset_i\}_{i\in\tset} $ and commutative monoids $ \{\mathcal{M}_{ij}\}_{i,j\in\tset} $.
Note that modeling some aspect of a network that follows our assumptions is effectively choosing one of the elements of $ \hat{\mathcal{F}}_{\tset} $, which we call \textbf{oracle functions}. In this work we will make use of the following topological result.
\begin{lemma}
	Consider $ \hat{\mathcal{F}}_{i} $ and $ \hat{\mathcal{F}}_{\tset} $ such that the related sets $ \{\yset_j\}_{j\in\tset} $ are Hausdorff spaces. Then, $ \hat{\mathcal{F}}_{i} $ and $ \hat{\mathcal{F}}_{\tset} $ are sequentially closed in the topology of pointwise convergence (product topology).
	\label{lemma:F_closed}
	\hfill$ \square $
\end{lemma}
\begin{proof}
	Consider a sequence of functions $ (\leftidx{^N}{}\hat{f}_i)_{N\in\mathbb{N}} $, with $ \leftidx{^N}{}\hat{f}_i \in \hat{\mathcal{F}}_{i} $ for all $ N\in\mathbb{N} $, such that it converges pointwise to some function $ \hat{f}_i $. That is,
	\begin{align*}
	\lim\limits_{N\to\infty}
	\leftidx{^N}{}\hat{f}_i(x;\mathbf{w},\mathbf{x})
	= 
	\hat{f}_i(x;\mathbf{w},\mathbf{x}),
	\end{align*}
	for all $ x\in\xset_i $, $ \mathbf{x} \in \xset^{\mathbf{k}} $, $ \mathbf{w} \in \mathcal{M}_i^{\mathbf{k}} $, for any given $ \mathbf{k} 
	\geq \mathbf{0}_{\vert\tset\vert} $. Given a permutation matrix $ \permut $ of appropriate dimension, then
	\begin{align*}
	\lim\limits_{N\to\infty}
	\leftidx{^N}{}\hat{f}_i(x;\permut\mathbf{w},\permut\mathbf{x})
	= 
	\hat{f}_i(x;\permut\mathbf{w},\permut\mathbf{x}).
	\end{align*}
	Note that from assumption, \cref{eq:oracle_permut} is satisfied for every $ \leftidx{^N}{}\hat{f}_i $. Therefore, these two sequences are the same. Since $ \yset_i $ is Hausdorff, we know that the limit of a convergent sequence is unique, which implies $ \hat{f}_i(x;\mathbf{w},\mathbf{x})	=	\hat{f}_i(x;\permut{\mathbf{w}},\permut{\mathbf{x}}) $. That is, $ \hat{f}_i $ also satisfies \cref{eq:oracle_permut}. The same reasoning applies with respect to
	\cref{eq:oracle_merge,eq:oracle_0_equal}.\\
	Therefore, $ \hat{f}_i \in \hat{\mathcal{F}}_{i} $, which means that $ \hat{\mathcal{F}}_{i} $ is sequentially closed. Since the product of sequentially closed sets is sequentially closed, $ \hat{\mathcal{F}}_{\tset} $ is also sequentially closed.
\end{proof}
\section{Decomposition into coupling components} \label{sec:decomp_coupl_comp}
We are now ready to develop our first decomposition scheme. In this section, we present a decomposition scheme for oracle components in which the output sets $ \{\yset_i\}_{i\in\tset} $ are vector spaces. We start by illustrating the main ideas with an example.
\begin{exmp}
	Consider cell types $ \tset=\{1,2\} $ which denote the cell types ``circle" and ``square" respectively. \Cref{fig:decomp_exmp} presents a cell of type $ 1 $ with different types of inputs sets, denoted by the multi-indexes $ \left[0 0\right]$, $ \left[0 1\right]$ and $ \left[0 2\right]$ respectively. 
	\begin{figure}[h]
		\centering
		\begin{subfigure}[t]{0.30\textwidth}
			\centering
			\begin{tikzpicture}[
node1/.style = {circle,minimum size=23,draw},
node2/.style = {circle,minimum size=23,draw,fill=white!75!black},
node3/.style = {circle,minimum size=23,draw,fill=white!50!black},
noderect/.style = {rectangle,minimum size=20,draw},
edge1/.style = {>=latex,thick},
edgedash/.style = {>=latex,thick,dashed},
edge2/.style = {>=latex,thick,blue},
edge3/.style = {>=latex,thick,red}
]

\node[node1] at (0,0)(n3){$ x $};


\end{tikzpicture} 
			\caption{No in-neighbors.}
			\label{fig:decomp_exmpa}
		\end{subfigure}
		\begin{subfigure}[t]{0.30\textwidth}
			\centering
			\begin{tikzpicture}[
node1/.style = {circle,minimum size=23,draw},
node2/.style = {circle,minimum size=23,draw,fill=white!75!black},
node3/.style = {circle,minimum size=23,draw,fill=white!50!black},
noderect/.style = {rectangle,minimum size=20,draw},
edge1/.style = {>=latex,thick},
edgedash/.style = {>=latex,thick,dashed},
edge2/.style = {>=latex,thick,blue},
edge3/.style = {>=latex,thick,red}
]
\node[noderect] at (0.0,2)(n1){$ x_a $};
\node[node1] at (0,0)(n3){$ x $};

\draw [->,edge1](n1) -- (n3);

\node (w13) at ($(n1)!0.5!(n3) + (-0.4,-0.0)$) {$ w_a $};


\end{tikzpicture} 
			\caption{One in-neighbor.}
			\label{fig:decomp_exmpb}
		\end{subfigure}
		\begin{subfigure}[t]{0.30\textwidth}
			\centering
			\begin{tikzpicture}[
node1/.style = {circle,minimum size=23,draw},
node2/.style = {circle,minimum size=23,draw,fill=white!75!black},
node3/.style = {circle,minimum size=23,draw,fill=white!50!black},
noderect/.style = {rectangle,minimum size=20,draw},
edge1/.style = {>=latex,thick},
edgedash/.style = {>=latex,thick,dashed},
edge2/.style = {>=latex,thick,blue},
edge3/.style = {>=latex,thick,red}
]
\node[noderect] at (-0.75,2)(n1){$ x_a $};
\node[noderect] at (+0.75,2)(n2){$ x_b $};
\node[node1] at (0,0)(n3){$ x $};

\draw [->,edge1](n1) -- (n3);
\draw [->,edge1](n2) -- (n3);

\node (w13) at ($(n1)!0.5!(n3) + (-0.4,-0.0)$) {$ w_a $};
\node (w23) at ($(n2)!0.5!(n3) + (+0.4,-0.0)$) {$ w_b $};


\end{tikzpicture} 
			\caption{Two in-neighbors.}
			\label{fig:decomp_exmpc}
		\end{subfigure}
		\centering
		\caption{Simple input sets.}
		\label{fig:decomp_exmp}
	\end{figure}
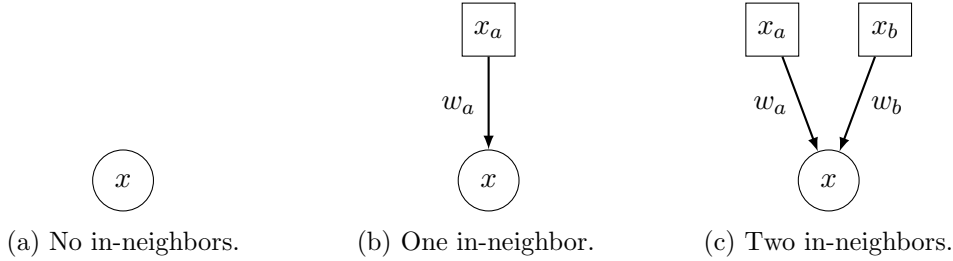
	We assume a particular oracle component $ \hat{f}_1 \in \hat{\mathcal{F}}_{1} $ has been chosen.
	Consider the input set in \cref{fig:decomp_exmpa}. This cell does not depend on anything else in the network, it evolves only according to its own internal dynamics. We define the function $ f_1^{\left[ 0 0 \right]}\colon
	\xset_1\to \yset_1 $ as
	\begin{align*}
	f_1^{\left[00\right]}(x) := \hat{f}_1(x).
	\end{align*}
	We use this to rewrite the function evaluation of the input set in \cref{fig:decomp_exmpb} as
	\begin{align*}
	\hat{f}_1(x;w_a,x_a) 
	= 
	f_1^{\left[00\right]}(x) 
	+ 
	f_1^{\left[01\right]}(x;w_a,x_a),
	\end{align*}
	where $ f_1^{\left[01\right]}
	\colon
	\xset_1\times 
	\mathcal{M}_{12}
	\times 
	\xset_{2}
	\to \yset_1 $ is defined as
	\begin{align*}
	f_1^{\left[01\right]}(x;w_a,x_a) 
	:= 
	\hat{f}_1(x;w_a,x_a) 
	-
	f_1^{\left[00\right]}(x).
	\end{align*}
	That is, we decompose the evaluation of the oracle component $ \hat{f}_1 $ into the internal dynamics of the cell $ \left(f_1^{\left[00\right]}\right) $ and the influence from its single in-neighbor of cell type $ 2 $ $ \left(f_1^{\left[01\right]}\right) $. Note that if the weight value is $ 0_{12}$, this case reduces to the one in \cref{fig:decomp_exmpa}, which implies that $ f_1^{\left[01\right]}(x;0_{12},x_a) = 0_{\yset_1} $.\\
	Consider now the input set in \cref{fig:decomp_exmpc}. We can write its evaluation of the oracle component as
	\begin{align*}
	\hat{f}_1\left(x; \begin{bmatrix}w_a \\ w_b\end{bmatrix},
	\begin{bmatrix}x_a \\ x_b\end{bmatrix}\right) 
	=
	f_1^{\left[00\right]}(x)
	+ 
	f_1^{\left[01\right]}(x;w_a,x_a) 
	+ 
	f_1^{\left[01\right]}(x;w_b,x_b) 
	+ 
	f_1^{\left[02\right]}\left(x; 
	\begin{bmatrix}w_a \\ w_b\end{bmatrix},
	\begin{bmatrix}x_a \\ x_b\end{bmatrix}
	\right), 
	\end{align*}
	where $ f_1^{\left[02\right]}\colon
	\xset_1\times
	\mathcal{M}_{12}^2 \times \xset_{2}^2
	\to \yset_1 $ is defined as
	\begin{align*}
	f_1^{\left[02\right]}
	\left(x; 
	\begin{bmatrix}w_a \\ w_b\end{bmatrix},
	\begin{bmatrix}x_a \\ x_b\end{bmatrix}
	\right) 
	&:=
	\hat{f}_1
	\left(x; 
	\begin{bmatrix}w_a \\ w_b
	\end{bmatrix},
	\begin{bmatrix}x_a \\ x_b
	\end{bmatrix}
	\right) 
	-
	f_1^{\left[00\right]}(x)
	-
	f_1^{\left[01\right]}(x;w_a,x_a) 
	-
	f_1^{\left[01\right]}(x;w_b,x_b).
	\end{align*}
	The term $ f_1^{\left[02\right]} $ describes a $ 2 $-order coupling effect of cells of type ``square" onto cells of type ``circle". By definition, it corresponds to what cannot be explained by the internal dynamics $ \left(f_1^{\left[00\right]}\right) $ ($ 0 $-order coupling) and the $ 1 $-order coupling contributions from each ``square" in-neighbor $ \left(f_1^{\left[01\right]}\right) $.
	Note that if \textbf{any} of its weight parameters $ w_a,w_b $ is $ 0_{12} $, this reduces to the previous case and similarly we conclude that $ f_1^{\left[02\right]}\left(x; 
	\begin{bmatrix}w_a \\ w_b\end{bmatrix},
	\begin{bmatrix}x_a \\ x_b\end{bmatrix}
	\right) = 0_{\yset_1} $. Moreover, note that
	\begin{align*}
	f_1^{\left[02\right]}
	\left(x; 
	\begin{bmatrix}w_a \\ w_b\end{bmatrix},
	\begin{bmatrix}x_a \\ x_b\end{bmatrix}
	\right)
	=
	f_1^{\left[02\right]}
	\left(x; 
	\begin{bmatrix}w_b \\ w_a\end{bmatrix},
	\begin{bmatrix}x_b \\ x_a\end{bmatrix}
	\right). 
	\end{align*}
	Consider now the case where $ x_a = x_b = x_{ab}$. This is equivalent to having an edge weight of $ w_a \| w_b $ in \cref{fig:decomp_exmpb}. This implies
	\begin{align}
	f_1^{\left[01\right]}(x;w_a \| w_b,x_{ab}) 
	= 
	f_1^{\left[01\right]}(x;w_a,x_{ab})
	+ 
	f_1^{\left[01\right]}(x;w_b,x_{ab})
	+ 
	f_1^{\left[02\right]}
	\left(x; 
	\begin{bmatrix}w_a \\ w_b\end{bmatrix},
	\begin{bmatrix}x_{ab} \\ x_{ab}\end{bmatrix}
	\right)
	\label{eq:f1_f2_dependence},
	\end{align}
	which means that $ f_1^{\left[01\right]} $ and $ f_1^{\left[02\right]} $ are related to one another and cannot be chosen independently.
	\hfill$ \square $
\end{exmp}
The following definition is the generalization of this approach to arbitrary finite cell types and in-neighborhoods.
\begin{defi}
	Consider the set of cell types $ \tset $ and the related sets $ \{\xset_i, \yset_i\}_{i\in\tset} $ where $ \{\yset_i\}_{i\in\tset} $ are vector spaces. Given an oracle component $ \hat{f}_i \in \hat{\mathcal{F}}_{i} $, $ i\in\tset $, we define the family of \textbf{coupling components} $ \{f_i^{\mathbf{k}}\}_{\mathbf{k} \geq\mathbf{0}_{\vert\tset\vert}} $, with
	\begin{align}
	f_i^{\mathbf{k}}\colon
	\xset_i\times
	\mathcal{M}_i^{\mathbf{k}} \times \xset^{\mathbf{k}}
	\to \yset_i, 
	\end{align}
	defined recursively by
\begin{equation}
f_i^{\mathcal{K}(\mathbf{s})}\left(x;\mathbf{w}_{\mathbf{s}},\mathbf{x}_{\mathbf{s}}\right)
:=
\hat{f}_i\left(x; \mathbf{w}_{\mathbf{s}},\mathbf{x}_{\mathbf{s}}\right) 
-
\sum_{\overline{\mathbf{s}}\subset\mathbf{s}} f_i^{\mathcal{K}(\overline{\mathbf{s}})}\left(x;\mathbf{w}_{\overline{\mathbf{s}}},\mathbf{x}_{\overline{\mathbf{s}}}\right),
\label{eq:decomp_defi_recursive}
\end{equation}
where $ \mathbf{k} =  \mathcal{K}(\mathbf{s}) $ gives the corresponding multi-index of the types of cells $ \mathbf{s} $ and $ x\in\xset_{i} $, $ \mathbf{x}_{\mathbf{s}} \in \xset^{\mathbf{k}} $, $ \mathbf{w}_{\mathbf{s}} \in 
\mathcal{M}_i^{\mathbf{k}} $.
\hfill$ \square $
\label{defi:func_decomp}
\end{defi}
The following result expands the recursive formula in \cref{eq:decomp_defi_recursive} and writes $ \{f_i^{\mathbf{k}}\}_{\mathbf{k} \geq\mathbf{0}_{\vert\tset\vert}} $ explicitly in terms of $ \hat{f}_i $.
\begin{lemma}\label{lemma:decomp_explicit}
The coupling components $ \{f_i^{\mathbf{k}}\}_{\mathbf{k} \geq\mathbf{0}_{\vert\tset\vert}} $ of an oracle component $ \hat{f}_i \in \hat{\mathcal{F}}_{i} $, $ i\in\tset $, are given by
\begin{align}
f_i^{\mathcal{K}(\mathbf{s})}\left(x;\mathbf{w}_{\mathbf{s}},\mathbf{x}_{\mathbf{s}}\right)
=
\sum_{\overline{\mathbf{s}}\subseteq\mathbf{s}}
(-1)^{\vert\mathbf{s}\vert-\vert\overline{\mathbf{s}}\vert}
\hat{f}_i
\left(x;\mathbf{w}_{\overline{\mathbf{s}}},\mathbf{x}_{\overline{\mathbf{s}}}\right).
\label{eq:decomp_explicit}
\end{align}
\hfill$ \square $
\end{lemma}
\begin{proof}	
The proof is by strong induction. Assume the statement to be true for all $ \overline{\mathbf{s}}\subset\mathbf{s} $. Then, by assumption we can plug the explicit formula \cref{eq:decomp_explicit} into the recursive definition \cref{eq:decomp_defi_recursive} in order to obtain
\begin{align*}
f_i^{\mathcal{K}(\mathbf{s})}\left(x;\mathbf{w}_{\mathbf{s}},\mathbf{x}_{\mathbf{s}}\right)
&=
\hat{f}_i\left(x; \mathbf{w}_{\mathbf{s}},\mathbf{x}_{\mathbf{s}}\right) 
-
\sum_{\overline{\mathbf{s}}\subset\mathbf{s}} f_i^{\mathcal{K}(\overline{\mathbf{s}})}\left(x;\mathbf{w}_{\overline{\mathbf{s}}},\mathbf{x}_{\overline{\mathbf{s}}}\right)
\\
&=
\hat{f}_i\left(x; \mathbf{w}_{\mathbf{s}},\mathbf{x}_{\mathbf{s}}\right) 
-
\sum_{\overline{\mathbf{s}}\subset\mathbf{s}}
\sum_{\overline{\mathbf{r}}\subseteq\overline{\mathbf{s}}}
(-1)^{\vert\overline{\mathbf{s}}\vert-\vert\overline{\mathbf{r}}\vert}
\hat{f}_i
\left(x;\mathbf{w}_{\overline{\mathbf{r}}},\mathbf{x}_{\overline{\mathbf{r}}}\right).
\end{align*}
We reorder this such that the outer sum is indexed over $ \overline{\mathbf{r}} $, which yields
\begin{align*}
\hat{f}_i\left(x; \mathbf{w}_{\mathbf{s}},\mathbf{x}_{\mathbf{s}}\right) 
-
\sum_{\overline{\mathbf{r}}\subset\mathbf{s}}
\left[
\sum_{\substack{
	\overline{\mathbf{s}} \subset \mathbf{s}\\
	\overline{\mathbf{s}} \supseteq \overline{\mathbf{r}}
}}
(-1)^{\vert\overline{\mathbf{s}}\vert- \vert\overline{\mathbf{r}}\vert}
\right]
\hat{f}_i
\left(x;\mathbf{w}_{\overline{\mathbf{r}}},\mathbf{x}_{\overline{\mathbf{r}}}\right).
\end{align*}
Note that
\begin{align*}
\sum_{\substack{
		\overline{\mathbf{s}} \subset \mathbf{s}\\
		\overline{\mathbf{s}} \supseteq \overline{\mathbf{r}}
}}
(-1)^{\vert\overline{\mathbf{s}}\vert- \vert\overline{\mathbf{r}}\vert}
=
\sum_{
		(\overline{\mathbf{s}}\setminus \overline{\mathbf{r}}) \subset (\mathbf{s} \setminus\overline{\mathbf{r}})
}
(-1)^{\vert\overline{\mathbf{s}}\setminus\overline{\mathbf{r}}\vert}
=
\sum_{
	(\overline{\mathbf{s}}\setminus \overline{\mathbf{r}}) \subseteq (\mathbf{s} \setminus\overline{\mathbf{r}})
}
(-1)^{\vert\overline{\mathbf{s}}\setminus\overline{\mathbf{r}}\vert}
-(-1)^{\vert\mathbf{s}\setminus\overline{\mathbf{r}}\vert} .
\end{align*}
In the power set of a non-empty finite set, half of the subsets have an even size and the other half has odd size. Therefore, if $ \overline{\mathbf{r}}\subset\mathbf{s} $, we are in this situation and the sum cancels, and we get
\begin{align*}
f_i^{\mathcal{K}(\mathbf{s})}\left(x;\mathbf{w}_{\mathbf{s}},\mathbf{x}_{\mathbf{s}}\right)
&=
\hat{f}_i\left(x; \mathbf{w}_{\mathbf{s}},\mathbf{x}_{\mathbf{s}}\right) 
-
\sum_{\overline{\mathbf{r}}\subset\mathbf{s}}
\left[
-(-1)^{\vert\mathbf{s}\setminus\overline{\mathbf{r}}\vert}
\right]
\hat{f}_i
\left(x;\mathbf{w}_{\overline{\mathbf{r}}},\mathbf{x}_{\overline{\mathbf{r}}}\right)\\
&=
\hat{f}_i\left(x; \mathbf{w}_{\mathbf{s}},\mathbf{x}_{\mathbf{s}}\right) 
+
\sum_{\overline{\mathbf{r}}\subset\mathbf{s}}
(-1)^{\vert\mathbf{s} \vert - \vert \overline{\mathbf{r}}\vert}
\hat{f}_i
\left(x;\mathbf{w}_{\overline{\mathbf{r}}},\mathbf{x}_{\overline{\mathbf{r}}}\right)\\
&=
\sum_{\overline{\mathbf{r}}\subseteq\mathbf{s}}
(-1)^{\vert\mathbf{s} \vert - \vert \overline{\mathbf{r}}\vert}
\hat{f}_i
\left(x;\mathbf{w}_{\overline{\mathbf{r}}},\mathbf{x}_{\overline{\mathbf{r}}}\right),
\end{align*}
which proves the result for $ \mathbf{s} $. Note that the strong induction immediately satisfies the case $ \mathbf{s} = \emptyset $ since its hypothesis is vacuously true.
\end{proof}
Similarly, we can also write $ \hat{f}_i $ explicitly in terms of $ \{f_i^{\mathbf{k}}\}_{\mathbf{k} \geq\mathbf{0}_{\vert\tset\vert}} $.
\begin{lemma}
An oracle component $ \hat{f}_i \in \hat{\mathcal{F}}_{i} $, $ i\in\tset $ is given by its coupling components $ \{f_i^{\mathbf{k}}\}_{\mathbf{k} \geq\mathbf{0}_{\vert\tset\vert}} $, according to
	\begin{equation}
	\hat{f}_i\left(x; \mathbf{w}_{\mathbf{s}},\mathbf{x}_{\mathbf{s}}\right) 
	=
	\sum_{\overline{\mathbf{s}}\subseteq\mathbf{s}} f_i^{\mathcal{K}(\overline{\mathbf{s}})}
	\left(x;
	\mathbf{w}_{\overline{\mathbf{s}}},
	\mathbf{x}_{\overline{\mathbf{s}}}
	\right).
	\label{eq:composition_f}
	\end{equation}
\hfill$ \square $
\end{lemma}
\begin{proof}
This is immediate from  \cref{eq:decomp_defi_recursive} by simple rearrangement.
\end{proof}
Note that \cref{eq:composition_f} can also be written as
\begin{equation}
\hat{f}_i\left(x; \mathbf{w}_{\mathbf{s}},\mathbf{x}_{\mathbf{s}}\right) 
=
\sum_{\substack{\overline{\mathbf{k}} \leq \mathbf{k}
		\\
		\mathcal{K}(\mathbf{s})=\mathbf{k}
}}
\sum_{\substack{\overline{\mathbf{s}}\subseteq\mathbf{s}
		\\ 
		\mathcal{K}(\overline{\mathbf{s}})=\overline{\mathbf{k}}
}}
f_i^{\overline{\mathbf{k}}}\left(x;\mathbf{w}_{\overline{\mathbf{s}}},\mathbf{x}_{\overline{\mathbf{s}}}\right).
\label{eq:decomp_order_sum}
\end{equation}
\begin{remark}
	The number of multi-indexes smaller or equal to $ \mathbf{k} $ is $ \prod_{i\in\tset}(k_i+1) $ and for each particular $ \overline{\mathbf{k}} $ the number of terms in the sum is $ \prod_{i\in\tset}\binom{k_i}{\overline{k}_i} $.
	\hfill$ \square $	
\end{remark}
\begin{remark}
	These functions operate on an arbitrary (but finite) set of cells $ \mathbf{s} $. Even though there is no upper bound for the amount of terms in the sums, for any particular chosen $ \mathbf{s} $ the sum is always finite. Therefore, everything is well-defined and there are no convergence issues.
	\hfill$ \square $	
\end{remark}
This is exactly the anchored decomposition~\cite{kuo2010decompositions} applied to an arbitrary finite set of variables. The decomposition is done with respect to the weights of $ \mathbf{w}_{\mathbf{s}} $, anchoring them at $ 0_{ij} $, for the appropriate $ j\in\tset $. From the properties of the anchored decomposition we know immediately that if any of the entries of $ \mathbf{w} $ is $ 0_{ij} $, then
$f_i^{\mathbf{k}}\left(x;\mathbf{w},\mathbf{x}\right)=0_{\yset_i} $. From \cref{thm:oracle_0_equal} of \cref{defi:oracle}, we note that when we anchor some entry of $ \mathbf{w}_{\mathbf{s}} $ to $ 0_{ij} $ we are also removing the functional dependence on the corresponding entry of $ \mathbf{x}_{\mathbf{s}} $.\\
Moreover, note that for subsets of cells $ \mathbf{s}_1, \mathbf{s}_2 \subset \mathbf{s} $ such that $ \mathcal{K}(\mathbf{s}_1) = \mathcal{K}(\mathbf{s}_2) = \mathbf{k}$, we indexed their associated function by $ \mathbf{k} $ instead of by $  \mathbf{s}_1 $ and $ \mathbf{s}_2 $ as is traditional in the anchored decomposition. This is proper since the functions $ \{f_i^{\mathbf{k}}\}_{\mathbf{k} \geq \mathbf{0}_{\vert\tset\vert}} $ inherit from $ \hat{f}_i $ the property of being invariant to permutations.\\
In summary, the decomposition according to \cref{defi:func_decomp} gives us a family of functions $ \{f_i^{\mathbf{k}}\}_{\mathbf{k} \geq \mathbf{0}_{\vert\tset\vert}} $, which is an equivalent representation of a given oracle component function $ \hat{f}_i $.\\
The following result presents the necessary and sufficient conditions for $ \{f_i^{\mathbf{k}}\}_{\mathbf{k} \geq \mathbf{0}_{\vert\tset\vert}} $ to be such that it corresponds to a valid $ \hat{f}_i $. That is, for the corresponding $ \hat{f}_i $ to follow \cref{defi:oracle}.
\begin{theorem}
	\label{thm:decomp_f}
	The family of functions $ \{f_i^{\mathbf{k}}\}_{\mathbf{k} \geq\mathbf{0}_{\vert\tset\vert}} $, represents some valid oracle component $ \hat{f}_i \in \hat{\mathcal{F}}_{i} $, and is related to it according to \cref{defi:func_decomp}, if and only if for every $ \mathbf{k} \geq \mathbf{0}_{\vert\tset\vert} $, $ f_i^{\mathbf{k}} $ has the following properties:
	\begin{enumerate}
		\item \label{thm:decomp_f_permutation_invariance}If $ \permut $ is any permutation matrix (of appropriate dimension), then
		\begin{align}
		f_i^{\mathbf{k}}\left(x;\mathbf{w},\mathbf{x}\right)
		=
		f_i^{\mathbf{k}}\left(x;\permut\mathbf{w},\permut\mathbf{x}\right).
		\label{eq:decomp_perm_inv}
		\end{align}
		\item \label{thm:decomp_f_dependence} If $ k_j > 0 $, then $ f_i^{\mathbf{k}} $ and $ f_i^{\mathbf{k}+1_{j}} $ are related by
		\begin{align}
		f_i^{\mathbf{k}}
		\left(x; 
		\begin{bmatrix}
		w_{j_1} \| w_{j_2}\\ 
		\mathbf{w}_{\mathbf{s}}
		\end{bmatrix},
		\begin{bmatrix}
		x_{j_{12}} \\ 
		\mathbf{x}_{\mathbf{s}}
		\end{bmatrix}
		\right)
		&=
		f_i^{\mathbf{k}}
		\left(x; 
		\begin{bmatrix}
		w_{j_1} \\ 
		\mathbf{w}_{\mathbf{s}}
		\end{bmatrix},
		\begin{bmatrix}
		x_{j_{12}} \\ 
		\mathbf{x}_{\mathbf{s}}
		\end{bmatrix}
		\right)
		+
		f_i^{\mathbf{k}}
		\left(x; 
		\begin{bmatrix}
		w_{j_2}\\ 
		\mathbf{w}_{\mathbf{s}}
		\end{bmatrix},
		\begin{bmatrix}
		x_{j_{12}} \\ 
		\mathbf{x}_{\mathbf{s}}
		\end{bmatrix}
		\right)
		\\
		&\quad+
		f_i^{\mathbf{k}+1_{j}}
		\left(x; 
		\begin{bmatrix}
		w_{j_1} \\ 
		w_{j_2} \\
		\mathbf{w}_{\mathbf{s}}
		\end{bmatrix},
		\begin{bmatrix}
		x_{j_{12}} \\ 
		x_{j_{12}} \\ 
		\mathbf{x}_{\mathbf{s}}
		\end{bmatrix}
		\right),
		\notag
		\end{align}
		where $ \mathbf{s} $ is a set of cells such that $ \mathcal{K}(\mathbf{s}) = \mathbf{k} - 1_j$, and the indexes $ j_1 $, $j_2 $ and $ j_{12} $ denote cells of type $ j $.
		\item \label{thm:decomp_f_zero_kill}If \textbf{any} of the entries of $ \mathbf{w} $ is $ 0_{ij} $ for some $ j\in\tset $, then
		\begin{align}
		f_i^{\mathbf{k}}\left(x;\mathbf{w},\mathbf{x}\right)
		=
		0_{\yset_i}.
		\end{align}
	\end{enumerate}
	\hfill$ \square $
\end{theorem}
\begin{proof}
	We begin by proving the $ \implies $ direction. That is, for a given $ \hat{f}_i $, the family of functions
	$ \{f_i^{\mathbf{k}}\}_{\mathbf{k} \geq \mathbf{0}_{\vert\tset\vert}} $ will have the properties in \cref{thm:decomp_f_permutation_invariance,thm:decomp_f_zero_kill,thm:decomp_f_dependence}.\\
	\Cref{thm:decomp_f_permutation_invariance} is immediate from \cref{eq:decomp_explicit}, which writes $ f_i^{\mathcal{K}(\mathbf{s})} $ explicitly as a function of $ \hat{f}_i $, together with $ \cref{eq:oracle_permut} $. Note that the sum \cref{eq:decomp_explicit} being indexed over all subsets $ \overline{\mathbf{s}}\subseteq \mathbf{s} $ is crucial to keep the whole sum invariant under permutations.  
	This means that unlike what is traditional in the general anchored decomposition, we do not require to index the coupling components according to cells subsets (e.g., $ f_i^{\mathbf{s}_1} $, $ f_i^{\mathbf{s}_2} $) since they are functionally the same whenever $ \mathcal{K}(\mathbf{s}_1) = \mathcal{K}(\mathbf{s}_2) $. Instead we can freely index them according to their respective type multi-index.
	That is, our definition is self-consistent.\\
	The proof of \cref{thm:decomp_f_dependence} is by strong induction. Assume the statement to be true for all $ \overline{\mathbf{s}}\subset\mathbf{s} $. We apply \cref{eq:composition_f} to \cref{eq:oracle_merge}. Then, the left hand side becomes
	\begin{align*}
	\sum_{
		\overline{\mathbf{s}}
		\subseteq
		\mathbf{s}
	}
	\left(
	f_i^{\mathcal{K}(\overline{\mathbf{s}})}\left(x;
	\mathbf{w}_{\overline{\mathbf{s}}},
	\mathbf{x}_{\overline{\mathbf{s}}}\right)
	+
	f_i^{\mathcal{K}(\overline{\mathbf{s}})+1_j}
	\left(x;
	\begin{bmatrix}
	w_{j_1} \| w_{j_2}\\
	\mathbf{w}_{\overline{\mathbf{s}}}
	\end{bmatrix}
	,
	\begin{bmatrix}
	x_{j_{12}}\\
	\mathbf{x}_{\overline{\mathbf{s}}}
	\end{bmatrix}
	\right)
	\right),
	\end{align*}
	and the right hand expands into
	\begin{align*}
	\sum_{
		\overline{\mathbf{s}}
		\subseteq
		\mathbf{s} 
	}
	&\left[
	f_i^{\mathcal{K}(\overline{\mathbf{s}})}\left(x;
	\mathbf{w}_{\overline{\mathbf{s}}},
	\mathbf{x}_{\overline{\mathbf{s}}}\right)
	+
	f_i^{\mathcal{K}(\overline{\mathbf{s}})+1_j}
	\left(x;
	\begin{bmatrix}
	w_{j_1}\\
	\mathbf{w}_{\overline{\mathbf{s}}}
	\end{bmatrix}
	,
	\begin{bmatrix}
	x_{j_{12}}\\
	\mathbf{x}_{\overline{\mathbf{s}}}
	\end{bmatrix}
	\right)
	+
	f_i^{\mathcal{K}(\overline{\mathbf{s}})+1_j}
	\left(x;
	\begin{bmatrix}
	w_{j_2}\\
	\mathbf{w}_{\overline{\mathbf{s}}}
	\end{bmatrix}
	,
	\begin{bmatrix}
	x_{j_{12}}\\
	\mathbf{x}_{\overline{\mathbf{s}}}
	\end{bmatrix}
	\right)
	\right.
	\\
	&\quad+\left.
	f_i^{\mathcal{K}(\overline{\mathbf{s}})+2_j}
	\left(x;
	\begin{bmatrix}
	w_{j_1}\\
	w_{j_2}\\
	\mathbf{w}_{\overline{\mathbf{s}}}
	\end{bmatrix}
	,
	\begin{bmatrix}
	x_{j_{12}}\\
	x_{j_{12}}\\
	\mathbf{x}_{\overline{\mathbf{s}}}
	\end{bmatrix}
	\right)
	\right].
	\end{align*}
	The first terms of the sum in both sides cancel each other. Using the assumption that \cref{thm:decomp_f_dependence} holds for every index $ \overline{\mathbf{s}}\subset\mathbf{s}$, the last term of the left hand side cancels with the last three terms of the right hand side. Thus, what remains is those terms indexed with $ \overline{\mathbf{s}}=\mathbf{s} $, that is,
	\begin{align*}
	f_i^{\mathcal{K}(\mathbf{s})+1_j}
	\left(x;
	\begin{bmatrix}
	w_{j_1} \| w_{j_2}\\
	\mathbf{w}_{\mathbf{s}}
	\end{bmatrix}
	,
	\begin{bmatrix}
	x_{j_{12}}\\
	\mathbf{x}_{\mathbf{s}}
	\end{bmatrix}
	\right)
	&=
	f_i^{\mathcal{K}(\mathbf{s})+1_j}
	\left(x;
	\begin{bmatrix}
	w_{j_1}\\
	\mathbf{w}_{\mathbf{s}}
	\end{bmatrix}
	,
	\begin{bmatrix}
	x_{j_{12}}\\
	\mathbf{x}_{\mathbf{s}}
	\end{bmatrix}
	\right)
	+
	f_i^{\mathcal{K}(\mathbf{s})+1_j}
	\left(x;
	\begin{bmatrix}
	w_{j_2}\\
	\mathbf{w}_{\mathbf{s}}
	\end{bmatrix}
	,
	\begin{bmatrix}
	x_{j_{12}}\\
	\mathbf{x}_{\mathbf{s}}
	\end{bmatrix}
	\right)
	\\
	&\quad+
	f_i^{\mathcal{K}(\mathbf{s})+2_j}
	\left(x;
	\begin{bmatrix}
	w_{j_1}\\
	w_{j_2}\\
	\mathbf{w}_{\mathbf{s}}
	\end{bmatrix}
	,
	\begin{bmatrix}
	x_{j_{12}}\\
	x_{j_{12}}\\
	\mathbf{x}_{\mathbf{s}}
	\end{bmatrix}
	\right).
	\end{align*}
	This means that \cref{thm:decomp_f_dependence} applies for every $ \mathbf{s} $.\\
	\Cref{thm:decomp_f_zero_kill} comes directly from the known properties of the anchored decomposition. We prove it explicitly for completeness sake. Split the sum in \cref{eq:decomp_explicit} into two sums according to whenever the indexed subset contains a given cell $ c $ or not. That is,
	\begin{align*}
	f_i^{\mathcal{K}(\mathbf{s})}\left(x;\mathbf{w}_{\mathbf{s}},\mathbf{x}_{\mathbf{s}}\right)
	=
	\sum_{\overline{\mathbf{s}}\subseteq\mathbf{s}\setminus \{c\}} 
	(-1)^{\vert\mathbf{s}\vert-\vert\overline{\mathbf{s}}\vert}
	\hat{f}_i
	\left(x;\mathbf{w}_{\overline{\mathbf{s}}},\mathbf{x}_{\overline{\mathbf{s}}}\right)
	+
	\sum_{\overline{\mathbf{s}}\subseteq\mathbf{s}\setminus \{c\}} 
	(-1)^{\vert\mathbf{s}\vert-\vert\overline{\mathbf{s}}\cup \{c\}\vert}
	\hat{f}_i
	\left(x;
	\begin{bmatrix}
	w_c\\
	\mathbf{w}_{\overline{\mathbf{s}}}
	\end{bmatrix}
	,
	\begin{bmatrix}
	x_c\\
	\mathbf{x}_{\overline{\mathbf{s}}}
	\end{bmatrix}
	\right).
	\end{align*}
	If $ w_c = 0_{ij} $ for some $ j\in\tset $, then, we can apply \cref{eq:oracle_0_equal} on the right sum, which results in
	\begin{align*}
	f_i^{\mathcal{K}(\mathbf{s})}\left(x;\mathbf{w}_{\mathbf{s}},\mathbf{x}_{\mathbf{s}}\right)
	=
	\sum_{\overline{\mathbf{s}}\subseteq\mathbf{s}\setminus \{c\}} 
	(-1)^{\vert\mathbf{s}\vert-\vert\overline{\mathbf{s}}\vert}
	\hat{f}_i
	\left(x;\mathbf{w}_{\overline{\mathbf{s}}},\mathbf{x}_{\overline{\mathbf{s}}}\right)
	-
	\sum_{\overline{\mathbf{s}}\subseteq\mathbf{s}\setminus \{c\}} 
	(-1)^{\vert\mathbf{s}\vert-\vert\overline{\mathbf{s}}\vert}
	\hat{f}_i
	\left(x;
	\mathbf{w}_{\overline{\mathbf{s}}}
	,
	\mathbf{x}_{\overline{\mathbf{s}}}
	\right)
	=
	0_{\yset_i}.
	\end{align*}
	We now prove the $ \impliedby $ direction. That is, any given family of functions $ \{f_i^{\mathbf{k}}\}_{\mathbf{k} \geq \mathbf{0}_{\vert\tset\vert}} $ with the properties in \cref{thm:decomp_f_permutation_invariance,thm:decomp_f_zero_kill,thm:decomp_f_dependence}, defines a valid oracle component $ \hat{f}_i $. To prove that, we show that \cref{defi:func_decomp} will always be respected for any input.\\
	The proof of \cref{eq:oracle_permut} is immediate from \cref{eq:composition_f}, which writes $ \hat{f}_i $ explicitly as a function of $ f_i^{\mathcal{K}(\mathbf{s})} $, together with \cref{thm:decomp_f_permutation_invariance}. Note that the sum \cref{eq:composition_f} being indexed over all subsets $ \overline{\mathbf{s}}\subseteq \mathbf{s} $ is crucial to keep the whole sum invariant under permutations.\\
	We now prove that \cref{eq:oracle_merge} is satisfied. Using \cref{eq:composition_f} on its left hand side gives us
	\begin{align*}
	\sum_{
		\overline{\mathbf{s}}
		\subseteq
		\mathbf{s}
	}
	\left(
	f_i^{\mathcal{K}(\overline{\mathbf{s}})}\left(x;
	\mathbf{w}_{\overline{\mathbf{s}}},
	\mathbf{x}_{\overline{\mathbf{s}}}\right)
	+
	f_i^{\mathcal{K}(\overline{\mathbf{s}})+1_j}
	\left(x;
	\begin{bmatrix}
	w_{j_1} \| w_{j_2}\\
	\mathbf{w}_{\overline{\mathbf{s}}}
	\end{bmatrix}
	,
	\begin{bmatrix}
	x_{j_{12}}\\
	\mathbf{x}_{\overline{\mathbf{s}}}
	\end{bmatrix}
	\right)
	\right).
	\end{align*}
	We now apply \cref{thm:decomp_f_dependence} to the second term of the sum and we obtain
	\begin{align*}
	\sum_{
		\overline{\mathbf{s}}
		\subseteq
		\mathbf{s} 
	}
	&\left[
	f_i^{\mathcal{K}(\overline{\mathbf{s}})}\left(x;
	\mathbf{w}_{\overline{\mathbf{s}}},
	\mathbf{x}_{\overline{\mathbf{s}}}\right)
	+
	f_i^{\mathcal{K}(\overline{\mathbf{s}})+1_j}
	\left(x;
	\begin{bmatrix}
	w_{j_1}\\
	\mathbf{w}_{\overline{\mathbf{s}}}
	\end{bmatrix}
	,
	\begin{bmatrix}
	x_{j_{12}}\\
	\mathbf{x}_{\overline{\mathbf{s}}}
	\end{bmatrix}
	\right)
	+
	f_i^{\mathcal{K}(\overline{\mathbf{s}})+1_j}
	\left(x;
	\begin{bmatrix}
	w_{j_2}\\
	\mathbf{w}_{\overline{\mathbf{s}}}
	\end{bmatrix}
	,
	\begin{bmatrix}
	x_{j_{12}}\\
	\mathbf{x}_{\overline{\mathbf{s}}}
	\end{bmatrix}
	\right)
	\right.
	\\
	&\quad+\left.
	f_i^{\mathcal{K}(\overline{\mathbf{s}})+2_j}
	\left(x;
	\begin{bmatrix}
	w_{j_1}\\
	w_{j_2}\\
	\mathbf{w}_{\overline{\mathbf{s}}}
	\end{bmatrix}
	,
	\begin{bmatrix}
	x_{j_{12}}\\
	x_{j_{12}}\\
	\mathbf{x}_{\overline{\mathbf{s}}}
	\end{bmatrix}
	\right)
	\right],
	\end{align*}
	Using \cref{eq:composition_f} again gives us the right hand side of \cref{eq:oracle_merge}.\\
	We now prove \cref{eq:oracle_0_equal}. Assume $ \mathbf{w} $ has its $ c^{th} $ element equal to $ 0_{ij} $ for some $ j\in\tset $. Then,
	\begin{align*}
	\hat{f}_i(x;\mathbf{w},\mathbf{x})
	=
	\sum_{\overline{\mathbf{s}}\subseteq\mathbf{s}} 
	f_i^{\mathcal{K}(\overline{\mathbf{s}})}
	\left(x;
	\mathbf{w}_{\overline{\mathbf{s}}}
	,
	\mathbf{x}_{\overline{\mathbf{s}}}
	\right)
	=
	\sum_{\overline{\mathbf{s}}\subseteq\mathbf{s}\setminus \{c\}} 
	f_i^{\mathcal{K}(\overline{\mathbf{s}})}
	\left(x;
	\mathbf{w}_{\overline{\mathbf{s}}}
	,
	\mathbf{x}_{\overline{\mathbf{s}}}
	\right)
	=
	\hat{f}_i(x;\mathbf{w}_{-c},\mathbf{x}_{-c}),
	\end{align*}
	where the first equality comes from \cref{eq:composition_f}, the second from applying \cref{thm:decomp_f_zero_kill} and the last one from using \cref{eq:composition_f} again.
\end{proof}
At this point, we have started with the definition of oracle components $ \hat{f}_i $ in \cref{defi:oracle}. Then, we established a bijective correspondence between $ \hat{f}_i $ and a family of coupling components $ \{f_i^{\mathbf{k}}\}_{\mathbf{k} \geq\mathbf{0}_{\vert\tset\vert}} $ in \cref{defi:func_decomp}. Finally, \cref{thm:decomp_f} completed the cycle by making it so that we can also start by first constructing a valid $ \{f_i^{\mathbf{k}}\}_{\mathbf{k} \geq\mathbf{0}_{\vert\tset\vert}} $ and then obtaining the corresponding $ \hat{f}_i $ afterwards.\\
We are now interested in knowing how to manipulate this mathematical object through this new representation. Subsequently, we will provide some examples that illustrate this decomposition and its properties.
\begin{lemma}
	Consider the oracle components $ \hat{f}_i $ and the ones in the sequence $ (\leftidx{^N}{}\hat{f}_i)_{N\in\mathbb{N}} $ such that their corresponding coupling components are, respectively, $ \{f_i^{\mathbf{k}}\}_{\mathbf{k} \geq \mathbf{0}_{\vert\tset\vert}} $ and\\$ \left( \{\leftidx{^N}{}f_i^{\mathbf{k}}\}_{\mathbf{k} \geq \mathbf{0}_{\vert\tset\vert}}\right)_{N\in\mathbb{N}} $. If the output set $ \yset_i $ is a Hausdorff topological vector space, then,
	\begin{align*}
	\lim\limits_{N\to\infty} 
	\leftidx{^N}{}\hat{f}_i
	= 
	\hat{f}_i
	\Longleftrightarrow
	\lim\limits_{N\to\infty} 
	\{\leftidx{^N}{}f_i^{\mathbf{k}}\}_{\mathbf{k} \geq \mathbf{0}_{\vert\tset\vert}}
	=
	\{f_i^{\mathbf{k}}\}_{\mathbf{k} \geq \mathbf{0}_{\vert\tset\vert}}
	\end{align*}
	in the topology of pointwise convergence.
	\label{lemma:convergence}
	\hfill$ \square $
\end{lemma}
\begin{proof}
	We begin by proving the $ \implies $ direction. That is, assume $ \lim\limits_{N\to\infty} 
	\leftidx{^N}{}\hat{f}_i = 
	\hat{f}_i $.\\
	For any $ N\in\mathbb{N} $, we know from \cref{eq:decomp_explicit}, that for any set of cells $ \mathbf{s} $
	\begin{align*}
	\lim\limits_{N\to\infty} 
	\leftidx{^N}{}f_i^{\mathcal{K}(\mathbf{s})}
	\left(x;
	\mathbf{w}_{\mathbf{s}},
	\mathbf{x}_{\mathbf{s}}
	\right)
	&=
	\lim\limits_{N\to\infty} 
	\sum_{\overline{\mathbf{s}}\subseteq\mathbf{s}}
	(-1)^{\vert\mathbf{s}\vert-\vert\overline{\mathbf{s}}\vert}
	\leftidx{^N}{}\hat{f}_i
	\left(x;\mathbf{w}_{\overline{\mathbf{s}}},\mathbf{x}_{\overline{\mathbf{s}}}\right)
	\\
	&=
	\sum_{\overline{\mathbf{s}}\subseteq\mathbf{s}}
	(-1)^{\vert\mathbf{s}\vert-\vert\overline{\mathbf{s}}\vert}
	\lim\limits_{N\to\infty} 
	\leftidx{^N}{}\hat{f}_i
	\left(x;\mathbf{w}_{\overline{\mathbf{s}}},\mathbf{x}_{\overline{\mathbf{s}}}\right)
	\\
	&=
	\sum_{\overline{\mathbf{s}}\subseteq\mathbf{s}}
	(-1)^{\vert\mathbf{s}\vert-\vert\overline{\mathbf{s}}\vert}
	\hat{f}_i
	\left(x;\mathbf{w}_{\overline{\mathbf{s}}},\mathbf{x}_{\overline{\mathbf{s}}}\right)
	\\
	&=
	f_i^{\mathcal{K}(\mathbf{s})}
	\left(x;
	\mathbf{w}_{\mathbf{s}},
	\mathbf{x}_{\mathbf{s}}
	\right).
	\end{align*}
	We now prove the $ \impliedby $ direction. That is, assume $ \lim\limits_{N\to\infty} 
	\{\leftidx{^N}{}f_i^{\mathbf{k}}\}_{\mathbf{k} \geq \mathbf{0}_{\vert\tset\vert}}
	=
	\{f_i^{\mathbf{k}}\}_{\mathbf{k} \geq \mathbf{0}_{\vert\tset\vert}} $.\\
	For any $ N\in\mathbb{N} $, we know from \cref{eq:composition_f}, that for any set of cells $ \mathbf{s} $
	\begin{align*}
	\lim\limits_{N\to\infty} 
	\leftidx{^N}{}\hat{f}_i
	\left(x; \mathbf{w}_{\mathbf{s}},\mathbf{x}_{\mathbf{s}}\right) 
	&=
	\lim\limits_{N\to\infty} 
	\sum_{\overline{\mathbf{s}}\subseteq\mathbf{s}} 
	\leftidx{^N}{}f_i^{\mathcal{K}(\overline{\mathbf{s}})}
	\left(x;
	\mathbf{w}_{\overline{\mathbf{s}}},
	\mathbf{x}_{\overline{\mathbf{s}}}
	\right)
	\\
	&=
	\sum_{\overline{\mathbf{s}}\subseteq\mathbf{s}} 
	\lim\limits_{N\to\infty} 
	\leftidx{^N}{}f_i^{\mathcal{K}(\overline{\mathbf{s}})}
	\left(x;
	\mathbf{w}_{\overline{\mathbf{s}}},
	\mathbf{x}_{\overline{\mathbf{s}}}
	\right)
	\\
	&=
	\sum_{\overline{\mathbf{s}}\subseteq\mathbf{s}} 
	f_i^{\mathcal{K}(\overline{\mathbf{s}})}
	\left(x;
	\mathbf{w}_{\overline{\mathbf{s}}},
	\mathbf{x}_{\overline{\mathbf{s}}}
	\right)
	\\
	&=
	\hat{f}_i
	\left(x; \mathbf{w}_{\mathbf{s}},\mathbf{x}_{\mathbf{s}}\right). 
	\end{align*}
	Note that in a topological vector space the addition operation $ +(\cdot,\cdot) $ is (jointly) continuous. This is what allowed us to convert limits of (finite) sums into (finite) sums of the limits. The Hausdorff property is required to ensure that the limits are always as stated due to uniqueness.
\end{proof}
\begin{lemma}
	For two oracle components $ \hat{f}_i, \hat{g}_i \in \hat{\mathcal{F}}_i $ with coupling components $ \{f_i^{\mathbf{k}}\}_{\mathbf{k} \geq \mathbf{0}_{\vert\tset\vert}} $ and $ \{g_i^{\mathbf{k}}\}_{\mathbf{k} \geq \mathbf{0}_{\vert\tset\vert}} $ respectively, the coupling components of $ \hat{h}_i = \alpha\hat{f}_i  + \hat{g}_i  $ are given by $ \{\alpha f_i^{\mathbf{k}} + g_i^{\mathbf{k}} \}_{\mathbf{k} \geq \mathbf{0}_{\vert\tset\vert}} $, for any scalar $ \alpha $.
	\label{lemma:coupling_additivity}
	\hfill$ \square $
\end{lemma}
\begin{proof}
This comes directly from writing the coupling components explicitly in terms of the oracle components as in \cref{eq:decomp_explicit}. That is,
\begin{align*}
h_i^{\mathcal{K}(\mathbf{s})}\left(x;\mathbf{w}_{\mathbf{s}},\mathbf{x}_{\mathbf{s}}\right)
&=
\sum_{\overline{\mathbf{s}}\subseteq\mathbf{s}}
(-1)^{\vert\mathbf{s}\vert-\vert\overline{\mathbf{s}}\vert}
\left(\alpha \hat{f}_i+\hat{g}_i\right)
\left(x;\mathbf{w}_{\overline{\mathbf{s}}},\mathbf{x}_{\overline{\mathbf{s}}}\right)
\\
&=
\alpha
\left(
\sum_{\overline{\mathbf{s}}\subseteq\mathbf{s}}
(-1)^{\vert\mathbf{s}\vert-\vert\overline{\mathbf{s}}\vert}
\hat{f}_i
\left(x;\mathbf{w}_{\overline{\mathbf{s}}},\mathbf{x}_{\overline{\mathbf{s}}}\right)
\right)
+
\sum_{\overline{\mathbf{s}}\subseteq\mathbf{s}}
(-1)^{\vert\mathbf{s}\vert-\vert\overline{\mathbf{s}}\vert}
\hat{g}_i
\left(x;\mathbf{w}_{\overline{\mathbf{s}}},\mathbf{x}_{\overline{\mathbf{s}}}\right)
\\
&=
\alpha
f_i^{\mathcal{K}(\mathbf{s})}\left(x;\mathbf{w}_{\mathbf{s}},\mathbf{x}_{\mathbf{s}}\right)
+
g_i^{\mathcal{K}(\mathbf{s})}\left(x;\mathbf{w}_{\mathbf{s}},\mathbf{x}_{\mathbf{s}}\right)
\\
&=
\left(
\alpha
f_i^{\mathcal{K}(\mathbf{s})}+g_i^{\mathcal{K}(\mathbf{s})}\right)\left(x;\mathbf{w}_{\mathbf{s}},\mathbf{x}_{\mathbf{s}}\right).
\end{align*}
\end{proof}
We have shown that operating linearly on $ \hat{\mathcal{F}}_{i} $ is completely straightforward, with the coupling components $ \{f_i^{\mathbf{k}}\}_{\mathbf{k} \geq \mathbf{0}_{\vert\tset\vert}} $ being affected component-wise according to the respective linear combination.
\begin{corollary}
	The coupling components of order $ 0 $, that is, $ f_i^{\mathbf{0}} $, which describes the inner dynamics of a cell, are completely free and independent of the remaining coupling components $ \{f_i^{\mathbf{k}} \}_{\mathbf{k} > \mathbf{0} }$.
	\hfill$ \square $
\end{corollary}
\begin{corollary}
	Consider $ f_i^{\mathbf{k}+1_j} = 0_{\yset_i} $ for some $ \mathbf{k} \geq \mathbf{0}_{\vert\tset\vert} $, with $ k_j\geq 1 $, $ j\in\tset $.\\
	Then, $ f_i^{\mathbf{k}} $ is \textbf{additive in the weights} with respect to type $ j $. That is,
	\begin{align}
	f_i^{\mathbf{k}}
	\left(x; 
	\begin{bmatrix}
	w_{j_1} \| w_{j_2}\\ 
	\mathbf{w}_{\mathbf{s}}
	\end{bmatrix},
	\begin{bmatrix}
	x_{j_{12}} \\ 
	\mathbf{x}_{\mathbf{s}}
	\end{bmatrix}
	\right)
	=
	f_i^{\mathbf{k}}
	\left(x; 
	\begin{bmatrix}
	w_{j_1} \\ 
	\mathbf{w}_{\mathbf{s}}
	\end{bmatrix},
	\begin{bmatrix}
	x_{j_{12}} \\ 
	\mathbf{x}_{\mathbf{s}}
	\end{bmatrix}
	\right)
	+
	f_i^{\mathbf{k}}
	\left(x; 
	\begin{bmatrix}
	w_{j_2}\\ 
	\mathbf{w}_{\mathbf{s}}
	\end{bmatrix},
	\begin{bmatrix}
	x_{j_{12}} \\ 
	\mathbf{x}_{\mathbf{s}}
	\end{bmatrix}
	\right).
	\end{align}
	\label{cor:zero_makes_additive}
	\hfill$ \square $
\end{corollary}
The coupling decomposition allows us to define very important concepts that will prove essential in \cref{sec:decomp_basis_comp}.
\begin{defi}
	We say that an oracle component $ \hat{f}_i \in \hat{\mathcal{F}}_i $ with coupling components \\$ \{f_i^{\mathbf{k}}\}_{\mathbf{k} \geq \mathbf{0}_{\vert\tset\vert}} $ has (finite) \textbf{coupling order} $ \gamma_j \in \mathbb{N}_0 $ with respect to the cell type $ j\in\tset $, if there is some $ \mathbf{k} \geq \mathbf{0}_{\vert\tset\vert} $, with $ k_j = \gamma_j $ such that $ f_i^{\mathbf{k}} \neq 0_{\yset_i} $ and there is no such $ \overline{\mathbf{k}} \geq \mathbf{0}_{\vert\tset\vert} $ with $ \overline{k_j} > \gamma_j $.\\
	We say that $ \hat{f}_i \in \hat{\mathcal{F}}_i $ has infinite coupling order $( \gamma_j = \infty) $ with respect to the cell type $ j\in\tset $, if for every $ k_j \in \mathbb{N}_0 $ there is some $ \overline{\mathbf{k}} \geq \mathbf{0}_{\vert\tset\vert}$ such that $ f_i^{\overline{\mathbf{k}}} \neq 0_{\yset_i} $, with $ \overline{k_j}\geq k_j $. \\
	In particular, if $ \gamma_j = 1 $ or $ \gamma_j = 0 $, we say that it is \textbf{additive} or \textbf{uncoupled}, respectively, with regard to $ j\in\tset $.
\hfill$ \square $
\end{defi}
\begin{corollary}
	Consider an oracle component $ \hat{f}_i \in \hat{\mathcal{F}}_i $ with finite coupling order $ \gamma_j \geq 1 $ for some $ j\in\tset $. Then, for any $ \mathbf{k} \geq \mathbf{0}_{\vert\tset\vert} $ such that $ k_j = \gamma_j $, $ f_i^{\mathbf{k}} $ is additive in the weights with respect to type $ j $.
	\label{cor:top_is_additive}
	\hfill$ \square $
\end{corollary}
\begin{proof}
	If is it of order $ k_j = \gamma_j $, then, $ f_i^{\mathbf{k}+1_{j}} = 0_{\yset_i} $. The rest follows from \cref{cor:zero_makes_additive}.
\end{proof}
\begin{lemma}
	Consider an oracle component $ \hat{f}_i \in \hat{\mathcal{F}}_i $ such that for a particular $ j\in\tset $ the associated commutative monoid $ \mathcal{M}_{ij} $ has an annihilator $ a_{ij} $. Then the coupling order of $ \hat{f}_i $ with respect to cell type $ j\in\tset $, is either infinite or $ 0 $ (uncoupled).
	\hfill$ \square $
\end{lemma}
\begin{proof}
	The proof is by contradiction. Assume $ \hat{f}_i $ has finite order $ \gamma_j\geq 1 $. Then, for every $ \mathbf{k} \geq \mathbf{0}_{\vert\tset\vert} $, with $ k_j = \gamma_j $, we have that $ f_i^{\mathbf{k}+1_{j}} = 0_{\yset_i} $.	From \cref{cor:top_is_additive}, $ f_i^{\mathbf{k}}  $ is additive, which implies
	\begin{align}
	f_i^{\mathbf{k}}
	\left(x; 
	\begin{bmatrix}
	w_{j_1} \| a_{ij}\\ 
	\mathbf{w}_{\mathbf{s}}
	\end{bmatrix},
	\begin{bmatrix}
	x_{j_{12}} \\ 
	\mathbf{x}_{\mathbf{s}}
	\end{bmatrix}
	\right)
	=
	f_i^{\mathbf{k}}
	\left(x; 
	\begin{bmatrix}
	w_{j_1} \\ 
	\mathbf{w}_{\mathbf{s}}
	\end{bmatrix},
	\begin{bmatrix}
	x_{j_{12}} \\ 
	\mathbf{x}_{\mathbf{s}}
	\end{bmatrix}
	\right)
	+
	f_i^{\mathbf{k}}
	\left(x; 
	\begin{bmatrix}
	a_{ij}\\ 
	\mathbf{w}_{\mathbf{s}}
	\end{bmatrix},
	\begin{bmatrix}
	x_{j_{12}} \\ 
	\mathbf{x}_{\mathbf{s}}
	\end{bmatrix}
	\right).
	\end{align}
	Since $ w_{j_1} \| a_{ij} = a_{ij}$, this means that $ f_i^{\mathbf{k}} = 0_{\yset_i} $, which contradicts the assumption that $ \hat{f}_i $ is of order $ \gamma_j $.
\end{proof}
We illustrate the decomposition into coupling components scheme with the following examples.
\begin{exmp}
	Consider a single-type network such that 
	\begin{align*}
	\hat{f}_i
	\left(x; \mathbf{w}_{\mathbf{s}},\mathbf{x}_{\mathbf{s}}
	\right) 
	=
	f_i^0(x)
	+
	\left(
	\sum_{c\in\mathbf{s}} w_c x_c
	\right)^{2}.
	\end{align*}	
	We can derive the commutative monoid that defines the edge merging. It has to obey
	\begin{align*}
	f_i^0(x) + \left(\left(w_1\|w_2\right)x_{12}\right)^2 
	&=
	f_i^0(x) + \left(w_1 x_{12} + w_2 x_{12}\right)^2
	\\
	\left(w_1\|w_2\right)^2 x_{12}^2
	&=
	\left(w_1 + w_2\right)^2 x_{12}^2,
	\end{align*}
	from which we conclude that $ w_1\|w_2 $ is either $ w_1+w_2 $ or $ -\left(w_1+w_2\right) $. Note that for either case $ 0\|0 = 0 $. Assume the second option to be true. From the properties of the commutative monoid
	\begin{align*}
	w\|(0\|0) &= (w\|0)\|0
	\\
	w\|0 &= -w\|0
	\\
	-w &= w.
	\end{align*}
	 That is, the second option will only allow the trivial situation in which all edges are $ 0 $. Therefore, we choose $ w_1\|w_2 = w_1 + w_2 $.
	Considering only one in-neighbor, we conclude that
	\begin{align*}
	f_i^{1}(x;w_1,x_1) = \left(w_1 x_1\right)^{2}.
	\end{align*}
	Similarly,
	\begin{align*}
	f_i^2
	\left(x; 
	\begin{bmatrix}w_1 \\ w_2\end{bmatrix},
	\begin{bmatrix}x_{1} \\ x_{2}\end{bmatrix}
	\right)
	=
	2(w_1 x_1)(w_2 x_2).
	\end{align*}
	We can verify that \cref{thm:decomp_f_dependence} of \cref{thm:decomp_f} is satisfied, that is
	\begin{align*}
	f_i^1(x;w_1 \| w_2,x_{12}) 
	&=
	f_i^1(x;w_1,x_{12})
	+
	f_i^1(x;w_2,x_{12})
	+
	f_i^{2}
	\left(x; 
	\begin{bmatrix}w_1 \\ w_2\end{bmatrix},
	\begin{bmatrix}x_{12} \\ x_{12}\end{bmatrix}
	\right)	
	\\
	\left((w_1+w_2) x_{12}\right)^{2}
	&=
	\left(w_1 x_{12}\right)^{2}
	+
	\left(w_2 x_{12}\right)^{2}
	+
	2 w_1 w_2 x_{12}^2,
	\end{align*}
	which is indeed true.
	It can be seen that higher orders will all be $ 0 $.
	That is,
	\begin{align*}
	\hat{f}_i
	\left(x; \mathbf{w}_{\mathbf{s}},\mathbf{x}_{\mathbf{s}}
	\right) 
	&=
	f_i^0(x)
	+
	\sum_{c\in\mathbf{s}} (w_c x_c)^2 
	+
	\sum_{
		\substack{
			c,d\in\mathbf{s}\\
			c\neq d
		}
	} 
	2(w_c x_c)(w_d x_d)
	\\
	&=
	f_i^0(x)
	+
	\sum_{c\in\mathbf{s}} 
	f_i^{1}(x;w_c,x_c)
	+
	\sum_{
		\substack{
			c,d\in\mathbf{s}\\
			c\neq d
		}
	} 
	f_i^2
	\left(x; 
	\begin{bmatrix}w_c \\ w_d\end{bmatrix},
	\begin{bmatrix}x_{c} \\ x_{d}\end{bmatrix}
	\right).
	\end{align*}
	\hfill$ \square $
	\label{exmp:power_2}
\end{exmp}
We now extend the previous example to a general integer power. This requires the following generalization of the binomial coefficient.
\begin{defi}
	Consider $ n \geq 0 $ and $ \mathbf{m} \in \mathbb{Z}^{k} $ such that $ k>1 $ and $ \vert \mathbf{m} \vert = n $. The \textbf{multinomial coefficient} $ \binom{n}{\mathbf{m}} $ is defined as
	\begin{align}
	\binom{n}{\mathbf{m}}
	:=
	\begin{cases}
	\frac{n!}{\prod_{i=1}^{k} m_i !} & \text{if $ \mathbf{m}\geq \mathbf{0}_{k} $},\\
	0 & \text{otherwise}.
	\end{cases}
	\end{align}
	\hfill$ \square $
	\label{defi:multinomia_coef}
\end{defi}
\begin{remark}
	The reason for considering the cases $ \mathbf{m} \in \mathbb{Z}^{k}$ that are outside $\mathbb{N}_{0}^{k}$ and defining them as $ 0 $ is because it greatly simplifies the use of the recurrence relation
	\begin{align}
	\binom{n}{\mathbf{m}} = \sum_{i=1}^{k} \binom{n-1}{\mathbf{m}-1_i}, \quad n>0.
	\end{align}
	This avoids having to treat many corner cases as special. For instance, in the binomial case, defined as $ \binom{n}{m} = \binom{n}{m,n-m} $, this corresponds to $ \binom{n}{m} = \binom{n-1}{m-1} + \binom{n-1}{m}$, for $ n>0 $. The cases $ m =0 $ and $ m =n $ give us $ \binom{n}{0} = \binom{n-1}{-1} + \binom{n-1}{0} = \binom{n-1}{0}$ and $ \binom{n}{n} = \binom{n-1}{n-1} + \binom{n-1}{n} = \binom{n-1}{n-1}$, respectively.
	\hfill$ \square $
\end{remark}
\begin{exmp}
	Consider a single-type network such that 
	\begin{align*}
	\hat{f}_i
	\left(x; \mathbf{w}_{\mathbf{s}},\mathbf{x}_{\mathbf{s}}
	\right) 
	=
	f_i^0(x)
	+
	\left(
	\sum_{c\in\mathbf{s}} w_c x_c
	\right)^{n},
	\end{align*}
	with $ n \in\mathbb{N} $. Then, the coupling components $ f^k $ for $ k > 0 $ are given according to
	\begin{align*}
	f_i^{\vert \mathbf{s} \vert}
	\left(x;
	\mathbf{w}_{\mathbf{s}},
	\mathbf{x}_{\mathbf{s}}
	\right) 
	=
	\sum_{
		\substack{
			\mathbf{m} \geq \mathbf{1}_{\vert \mathbf{s} \vert}\\
			\vert \mathbf{m} \vert = n
		}
	}
	\binom{n}{\mathbf{m}}
	\prod_{c\in\mathbf{s}}
	(w_c x_c)^{m_c}
	=
	n!
	\sum_{
		\substack{
			\mathbf{m} \geq \mathbf{1}_{\vert \mathbf{s} \vert}\\
			\vert \mathbf{m} \vert = n
		}
	}
	\prod_{c\in\mathbf{s}}
	\frac{(w_c x_c)^{m_c}}{m_c!}.
	\end{align*}
	\label{lemma:power_n}
	The proof is by strong induction. Assume this to be true for $ k \in \{1,\ldots,a-1\} $, with $ a>0 $. Choose any set of cells $ \mathbf{s} $ such that $ \vert \mathbf{s} \vert = a$. From the recursive definition we have
	\begin{align*}
	f_i^{a}
	\left(x; 
	\mathbf{w}_{\mathbf{s}},
	\mathbf{x}_{\mathbf{s}}
	\right)
	&=
	\hat{f}_i
	\left(x; \mathbf{w}_{\mathbf{s}},\mathbf{x}_{\mathbf{s}}\right) 
	-
	\sum_{\overline{\mathbf{s}}\subset\mathbf{s}} 
	f_i^{ \vert \overline{\mathbf{s}} \vert} 
	\left(x;
	\mathbf{w}_{\overline{\mathbf{s}}},
	\mathbf{x}_{\overline{\mathbf{s}}}
	\right)\\
	&=
	f_i^0(x)
	+
	\sum_{
		\substack{
			\mathbf{m} \geq \mathbf{0}_{\vert \mathbf{s} \vert}\\
			\vert \mathbf{m} \vert = n
		}
	}
	\binom{n}{\mathbf{m}}
	\prod_{c\in\mathbf{s}}
	(w_c x_c)^{m_c}
	-
	\left[
	f_i^0(x)
	+
	\sum_{
		\substack{
			\overline{\mathbf{s}}\subset\mathbf{s}\\
			\overline{\mathbf{s}} \neq \emptyset
		}
	}
	\sum_{
		\substack{
			\mathbf{m} \geq \mathbf{1}_{\vert \overline{\mathbf{s}} \vert}\\
			\vert \mathbf{m} \vert = n
		}
	}
	\binom{n}{\mathbf{m}}
	\prod_{c\in\overline{\mathbf{s}}}
	(w_c x_c)^{m_c}
	\right]
	\\
	&=
	\sum_{
		\substack{
			\mathbf{m} \geq \mathbf{1}_{\vert \mathbf{s} \vert}\\
			\vert \mathbf{m} \vert = n
		}
	}
	\binom{n}{\mathbf{m}}
	\prod_{c\in\mathbf{s}}
	(w_c x_c)^{m_c}
	\\
	&=
	n!
	\sum_{
		\substack{
			\mathbf{m} \geq \mathbf{1}_{\vert \mathbf{s} \vert}\\
			\vert \mathbf{m} \vert = n
		}
	}
	\prod_{c\in\mathbf{s}}
	\frac{(w_c x_c)^{m_c}}{m_c!}.
	\end{align*}
	That is, the case $ k=a $ is also satisfied, which concludes the proof. Note that the case $ k=1 $ comes for free due to using strong induction (its hypothesis is vacuously true), although it is trivial to verify.
	\hfill$ \square $
\end{exmp}
\begin{remark}
	Note that for $ k > n$ there are no multi-indexes that satisfy simultaneously $ \mathbf{m} \geq \mathbf{1}_{k} $ and $ \vert \mathbf{m} \vert = n $. Therefore, $ f_i^k = 0 $ for such $ k $. The coupling order is then $ \gamma = n $.
	\hfill$ \square $
\end{remark}
\begin{remark}
	As a sanity check we verify that \cref{thm:decomp_f_dependence} of \cref{thm:decomp_f} is satisfied.\\
	First we can derive the commutative monoid that defines the edge merging. It has to obey
	\begin{align*}
	f_i^0(x) + \left(\left(w_1\|w_2\right)x_{12}\right)^n 
	&=
	f_i^0(x) + \left(w_1 x_{12} + w_2 x_{12}\right)^n
	\\
	\left(w_1\|w_2\right)^n x_{12}^n
	&=
	\left(w_1 + w_2\right)^n x_{12}^n.
	\end{align*}
	If $ n $ is odd, then $ w_1\|w_2 = w_1+w_2 $. If $ n $ is even, we are in the same situation as in \cref{exmp:power_2} and $ w_1\|w_2 = w_1+w_2 $ for us to be in a non-trivial setting.
	Now, to verify
	\begin{align*}
	f_i^{\vert \overline{\mathbf{s}} \vert + 2}
	\left(x; 
	\begin{bmatrix}
	w_{1} \\ 
	w_{2} \\
	\mathbf{w}_{\overline{\mathbf{s}}}
	\end{bmatrix},
	\begin{bmatrix}
	x_{12} \\ 
	x_{12} \\ 
	\mathbf{x}_{\overline{\mathbf{s}}}
	\end{bmatrix}
	\right)
	&=
	f_i^{\vert \overline{\mathbf{s}} \vert + 1}
	\left(x; 
	\begin{bmatrix}
	w_{1} \| w_{2}\\ 
	\mathbf{w}_{\overline{\mathbf{s}}}
	\end{bmatrix},
	\begin{bmatrix}
	x_{12} \\ 
	\mathbf{x}_{\overline{\mathbf{s}}}
	\end{bmatrix}
	\right)
	\\
	&\quad
	-
	f_i^{\vert \overline{\mathbf{s}} \vert + 1}
	\left(x; 
	\begin{bmatrix}
	w_{1} \\ 
	\mathbf{w}_{\overline{\mathbf{s}}}
	\end{bmatrix},
	\begin{bmatrix}
	x_{12} \\ 
	\mathbf{x}_{\overline{\mathbf{s}}}
	\end{bmatrix}
	\right)
	-
	f_i^{\vert \overline{\mathbf{s}} \vert + 1}
	\left(x; 
	\begin{bmatrix}
	w_{2}\\ 
	\mathbf{w}_{\overline{\mathbf{s}}}
	\end{bmatrix},
	\begin{bmatrix}
	x_{12} \\ 
	\mathbf{x}_{\overline{\mathbf{s}}}
	\end{bmatrix}
	\right),
	\end{align*}
	we note that
	\begin{align*}
	\binom{n}{m_1,m_2,\overline{\mathbf{m}}}
	=
	\binom{m_1 + m_2}{m_1, m_2}
	\binom{n}{m_1 + m_2,\overline{\mathbf{m}}}.
	\end{align*}
	Using this, the left hand side can be written as
	\begin{align*}
	\sum_{m_1,m_2 \geq 1}	
	\binom{m_1 + m_2}{m_1, m_2}
	w_1^{m_1}
	w_2^{m_2}
	x_{12}^{m_1+m_2}
	\sum_{
		\substack{
			\overline{\mathbf{m}} \geq \mathbf{1}_{\vert \overline{\mathbf{s}} \vert}\\
			m_1 + m_2 + \vert \overline{\mathbf{m}} \vert = n
		}
	}
	\binom{n}{m_1 +m_2,\overline{\mathbf{m}}}
	\prod_{c\in\overline{\mathbf{s}}}
	(w_c x_c)^{m_c}
	\end{align*}
	and the right hand side as
	\begin{align*}
	\sum_{m_{12}\geq 1}
	\left(
	(w_1+w_2)^{m_{12}} - w_1^{m_{12}} - w_2^{m_{12}}
	\right)
	x_{12}^{m_{12}}
	\sum_{
		\substack{
			\overline{\mathbf{m}} \geq \mathbf{1}_{\vert \overline{\mathbf{s}} \vert}\\
			m_{12} + \vert \overline{\mathbf{m}} \vert = n
		}
	}
	\binom{n}{m_{12},\overline{\mathbf{m}}}
	\prod_{c\in\overline{\mathbf{s}}}
	(w_c x_c)^{m_c}.
	\end{align*}
	Using the binomial theorem on $ (w_1 + w_2)^{m_{12}} $ we see that for a fixed $ m_{12} $ we have that
	\begin{align*}
	(w_1+w_2)^{m_{12}} - w_1^{m_{12}} - w_2^{m_{12}}
	=
	\sum_{
		\substack{
		m_1,m_2 \geq 1\\
		m_1 + m_2 = m_{12}	
	}
	}	
	\binom{m_{12}}{m_1, m_2}
	w_1^{m_1}
	w_2^{m_2}.
	\end{align*}
	Therefore, both sides are the same.
	\hfill$ \square $
\end{remark}
We now extend \cref{lemma:power_n} to the polynomial case.
\begin{exmp}
	From \cref{lemma:power_n} and \cref{lemma:coupling_additivity} we have that for single-type networks such that 
	\begin{align*}
	\hat{f}_i
	\left(x; \mathbf{w}_{\mathbf{s}},\mathbf{x}_{\mathbf{s}}
	\right) 
	=
	f_i^0(x)
	+
	\sum_{n=1}^{N}
	a_{n}
	\left(
	\sum_{c\in\mathbf{s}} w_c x_c
	\right)^{n},
	\end{align*}
	the coupling components $ f_i^k $ for $ k > 0 $ are given according to
	\begin{align*}
	f_i^{\vert \mathbf{s} \vert}
	\left(x;
	\mathbf{w}_{\mathbf{s}},
	\mathbf{x}_{\mathbf{s}}
	\right) 
	&=
	\sum_{n=1}^{N}
	a_n n!
	\sum_{
		\substack{
			\mathbf{m} \geq \mathbf{1}_{\vert \mathbf{s} \vert}\\
			\vert \mathbf{m} \vert = n
		}
	}
	\prod_{c\in\mathbf{s}}
	\frac{(w_c x_c)^{m_c}}{m_c!}.
	\end{align*}	
	\label{cor:polynomial}
	\hfill$ \square $
\end{exmp}
\begin{exmp}
	Consider the exponential case 
	\begin{align*}
	\hat{f}_i
	\left(x; \mathbf{w}_{\mathbf{s}},\mathbf{x}_{\mathbf{s}}
	\right) 
	=
	f_i^0(x)
	+
	\exp
	\left(
	\sum_{c\in\mathbf{s}} w_c x_c
	\right)-1.
	\end{align*}
	The coupling components $ f^k $ for $ k > 0 $ are given according to
	\begin{align*}
	f_i^{\vert \mathbf{s} \vert}
	\left(x;
	\mathbf{w}_{\mathbf{s}},
	\mathbf{x}_{\mathbf{s}}
	\right) 
	&=
	\prod_{c\in\mathbf{s}}
	\left(
	\exp
	\left(w_c x_c\right)-1
	\right).
	\end{align*}
	This is proven by creating the sequence of oracle components $ (\leftidx{^N}{}\hat{f}_i)_{N\in\mathbb{N}} $ such that
		\begin{align*}
		\leftidx{^N}{}\hat{f}_i
		\left(x; \mathbf{w}_{\mathbf{s}},\mathbf{x}_{\mathbf{s}}
		\right) 
		=
		f_i^0(x)
		+
		\sum_{n=1}^{N}
		\frac{1}{n!}
		\left(
		\sum_{c\in\mathbf{s}} w_c x_c
		\right)^{n}.
		\end{align*}
		That is, the oracle components obtained by replacing $ \exp(\cdot)-1 $ by its $ N^{th} $ order Taylor series truncation. From \cref{cor:polynomial} we know that for the sequence $ \left( \{\leftidx{^N}{}f_i^{k}\}_{k \geq 0}\right)_{N\in\mathbb{N}} $, the components $ \leftidx{^N}{}f^k $, for $ k > 0 $ are given according to
		\begin{align*}
		\leftidx{^N}{}f_i^{\vert \mathbf{s} \vert}
		\left(x;
		\mathbf{w}_{\mathbf{s}},
		\mathbf{x}_{\mathbf{s}}
		\right) 
		&=
		\sum_{
			\substack{
				\mathbf{m} \geq \mathbf{1}_{\vert \mathbf{s} \vert}\\
				\vert \mathbf{m} \vert \leq N
			}
		}
		\prod_{c\in\mathbf{s}}
		\frac{(w_c x_c)^{m_c}}{m_c!}.
		\end{align*}
		Since we know that $ \lim\limits_{N\to\infty} \leftidx{^N}{}\hat{f}_i = \hat{f}_i $ (pointwise), from \cref{lemma:convergence} we conclude that $ \lim\limits_{N\to\infty} \leftidx{^N}{}f_i^{\vert \mathbf{s} \vert} = f_i^{\vert \mathbf{s} \vert} $. That is,
		\begin{align*}
		f_i^{\vert \mathbf{s} \vert}
		\left(x;
		\mathbf{w}_{\mathbf{s}},
		\mathbf{x}_{\mathbf{s}}
		\right)
		&=
		\sum_{\mathbf{m} \geq \mathbf{1}_{\vert \mathbf{s} \vert}}
		\prod_{c\in\mathbf{s}}
		\frac{(w_c x_c)^{m_c}}{m_c!}.
		\end{align*}
		Here, the infinite sum $ \sum_{\mathbf{m} \geq \mathbf{1}_{\vert \mathbf{s} \vert}} $ is taken as 
		\begin{align*}
		\sum_{\mathbf{m} \geq \mathbf{1}_{\vert \mathbf{s} \vert}}
		:=
		\lim\limits_{N\to\infty}
		\sum_{
			\substack{
				\mathbf{m} \geq \mathbf{1}_{\vert \mathbf{s} \vert}\\
				\vert \mathbf{m} \vert \leq N
			}
		}.
		\end{align*}
		We can prove, however, that this particular infinite sum is absolutely convergent on the index set $ \mathbf{m} \geq \mathbf{1}_{\vert \mathbf{s} \vert} $. That is,
		\begin{align*}
		\sum_{\mathbf{m} \geq \mathbf{1}_{\vert \mathbf{s} \vert}}
		\left|
		\prod_{c\in\mathbf{s}}
		\frac{(w_c x_c)^{m_c}}{m_c!}
		\right|
		=
		\sum_{\mathbf{m} \geq \mathbf{1}_{\vert \mathbf{s} \vert}}
		\prod_{c\in\mathbf{s}}
		\frac{\left|w_c x_c\right|^{m_c}}{m_c!}
		=
		\prod_{c\in\mathbf{s}}
		\sum_{m_c \geq 1}
		\frac{\left|w_c x_c\right|^{m_c}}{m_c!}
		=
		\prod_{c\in\mathbf{s}}
		\left(
		\exp
		\left(\left|w_c x_c\right|\right)-1
		\right)
		< \infty.
		\end{align*}
		This means that the order does not matter and we can freely rearrange the sum into
		\begin{align*}
		f_i^{\vert \mathbf{s} \vert}
		\left(x;
		\mathbf{w}_{\mathbf{s}},
		\mathbf{x}_{\mathbf{s}}
		\right)
		=
		\sum_{\mathbf{m} \geq \mathbf{1}_{\vert \mathbf{s} \vert}}
		\prod_{c\in\mathbf{s}}
		\frac{(w_c x_c)^{m_c}}{m_c!}
		=
		\prod_{c\in\mathbf{s}}
		\sum_{m_c \geq 1}
		\frac{(w_c x_c)^{m_c}}{m_c!}
		=
		\prod_{c\in\mathbf{s}}
		\left(
		\exp
		\left(w_c x_c\right)-1
		\right).
		\end{align*}
	\label{cor:exponential}
	\hfill$ \square $
\end{exmp}
We now extend the previous results for multi-type networks.
\begin{exmp}
	Consider a multi-type network such that 
	\begin{align*}
	\hat{f}_i
	\left(x;
	\mathbf{w}_{\mathbf{s}}
	,
	\mathbf{x}_{\mathbf{s}}
	\right) 
	=
	f_i^{\mathbf{0}}(x)
	+
	\prod_{j\in \tset}
	\left(
	\sum_{c\in\mathbf{s}_j} w_c x_c
	\right)^{n_j}.
	\end{align*}
	with $ \mathbf{n} > \mathbf{0}_{\vert \tset \vert} $, and where $ \mathbf{s}_j \subseteq \mathbf{s}$ represents the subset of cells that are of type $ j\in \tset $. We use the definition $ 0^0 = 1 $, which is standard and avoids many corner cases (e.g., consider the binomial theorem applied to $ (x+0)^n $). Then, the coupling components $ \{f^{\mathbf{k}}\}_{\mathbf{k} > \mathbf{0}_{\vert \tset\vert}} $ are given according to
	\begin{align*}
	f_i^{\mathcal{K}(\mathbf{s})}
	\left(x; 
	\mathbf{w}_{\mathbf{s}},
	\mathbf{x}_{\mathbf{s}}
	\right)
	&=
	\prod_{j\in \tset}
	n_j!
	\sum_{
		\substack{
			\mathbf{m} \geq \mathbf{1}_{\vert \mathbf{s}_j \vert}\\
			\vert \mathbf{m} \vert = n_j
		}
	}
	\prod_{c\in\mathbf{s}_j}
	\frac{(w_c x_c)^{m_c}}{m_c!}.
	\end{align*}
	Note that expanding the outer product gives us
	\begin{align*}
	\sum_{
		\substack{
			\mathbf{m} \geq \mathbf{1}_{\vert \mathbf{s} \vert}
			\\
			\vert \mathbf{m}_1 \vert = n_1\\
			\ldots\\
			\vert \mathbf{m}_{\vert \tset\vert} \vert = n_{\vert \tset\vert}
		}
	}
	\left(
	\prod_{j \in \tset}
	n_j!
	\right)
	\prod_{c\in\mathbf{s}}
	\frac{(w_c x_c)^{m_c}}{m_c!}
	,
	\quad
	\text{with }
	\mathbf{m} := 
	\begin{bmatrix} \mathbf{m}_1 \\ 
	\vdots\\ 
	\mathbf{m}_{\vert \tset\vert}
	\end{bmatrix}.
	\end{align*}
	\label{lemma:power_n_multi}
	This is now proven by strong induction. Assume this to be true for $ \vert\mathbf{k}\vert \in \{1,\ldots,a-1\} $, with $ a>0 $. 
	For any $ \mathbf{k} > \mathbf{0}_{\vert \tset\vert} $ with $ \vert \mathbf{k} \vert = a $, choose any set of cells $ \mathbf{s} := \{\mathbf{s}_1 \cup \ldots \cup \mathbf{s}_{\vert \tset \vert}\} $, such that for every $ j\in\tset $, $ \mathbf{s}_j $ is a set of cells of type $ j $ and $  \vert \mathbf{s}_j \vert = k_j $.\\
	From the recursive definition,	$ f_i^{\mathcal{K}(\mathbf{s})}
	\left(x; 
	\mathbf{w}_{\mathbf{s}},
	\mathbf{x}_{\mathbf{s}}
	\right)
	=
	\hat{f}_i
	\left(x; \mathbf{w}_{\mathbf{s}},\mathbf{x}_{\mathbf{s}}\right) 
	-
	\sum_{\overline{\mathbf{s}}\subset\mathbf{s}} 
	f_i^{\mathcal{K}(\overline{\mathbf{s}})}
	\left(x;
	\mathbf{w}_{\overline{\mathbf{s}}},
	\mathbf{x}_{\overline{\mathbf{s}}}
	\right) $ becomes
		\begin{align*}
		&
		\prod_{j\in \tset}
		\sum_{
			\substack{
				\mathbf{m} \geq \mathbf{0}_{\vert \mathbf{s}_j \vert}\\
				\vert \mathbf{m} \vert = n_j
			}
		}
		\binom{n_j}{\mathbf{m}_j}
		\prod_{c\in\mathbf{s}_j}
		(w_c x_c)^{m_c}
		-
		\left[
		\sum_{
			\substack{
				\overline{\mathbf{s}}\subset\mathbf{s}\\
				\overline{\mathbf{s}} \neq \emptyset
			}
		}
		\sum_{
			\substack{
				\mathbf{m} \geq \mathbf{1}_{\vert \overline{\mathbf{s}} \vert}
				\\
				\vert \mathbf{m}_1 \vert = n_1\\
				\ldots\\
				\vert \mathbf{m}_{\vert \tset \vert} \vert = n_{\vert \tset \vert}
			}
		}
		\left(
		\prod_{j\in \tset}
		n_j!
		\right)
		\prod_{c\in \overline{\mathbf{s}}}
		\frac{(w_c x_c)^{m_c}}{m_c!}
		\right]
		\\
		&\quad=
		\sum_{
			\substack{
				\mathbf{m} \geq \mathbf{0}_{\vert \mathbf{s} \vert}
				\\
				\vert \mathbf{m}_1 \vert = n_1\\
				\ldots\\
				\vert \mathbf{m}_{\vert \tset \vert} \vert = n_{\vert \tset \vert }
			}
		}
		\left(
		\prod_{j \in \tset}
		n_j!
		\right)
		\prod_{c\in \mathbf{s}}
		\frac{(w_c x_c)^{m_c}}{m_c!}
		-
		\sum_{
			\substack{
				\overline{\mathbf{s}}\subset\mathbf{s}\\
				\overline{\mathbf{s}} \neq \emptyset
			}
		}
		\sum_{
			\substack{
				\mathbf{m} \geq \mathbf{1}_{\vert \overline{\mathbf{s}} \vert}
				\\
				\vert \mathbf{m}_1 \vert = n_1\\
				\ldots\\
				\vert \mathbf{m}_{\vert \tset \vert} \vert = n_{\vert \tset \vert }
			}
		}
		\left(
		\prod_{j\in \tset}
		n_j!
		\right)
		\prod_{c\in \overline{\mathbf{s}}}
		\frac{(w_c x_c)^{m_c}}{m_c!}
		\\
		&\quad=
		\sum_{
			\substack{
				\mathbf{m} \geq \mathbf{1}_{\vert \mathbf{s} \vert}
				\\
				\vert \mathbf{m}_1 \vert = n_1\\
				\ldots\\
				\vert \mathbf{m}_{\vert \tset \vert} \vert = n_{\vert \tset \vert }
			}
		}
		\left(
		\prod_{j \in \tset}
		n_j!
		\right)
		\prod_{c\in \mathbf{s}}
		\frac{(w_c x_c)^{m_c}}{m_c!}
		\\
		&\quad=
		\prod_{j\in \tset}
		n_j!
		\sum_{
			\substack{
				\mathbf{m} \geq \mathbf{1}_{\vert \mathbf{s}_j \vert}\\
				\vert \mathbf{m} \vert = n_j
			}
		}
		\prod_{c\in\mathbf{s}_j}
		\frac{(w_c x_c)^{m_c}}{m_c!}.
		\end{align*}
		That is, the case $ \vert \mathbf{k} \vert = a $ is also satisfied, which concludes the proof.
	\hfill$ \square $
\end{exmp}
\begin{exmp}
	From \cref{lemma:power_n_multi} and \cref{lemma:coupling_additivity}, we have that for multi-type networks such that
	\begin{align*}
	\hat{f}_i
	\left(x;
	\mathbf{w}_{\mathbf{s}}
	,
	\mathbf{x}_{\mathbf{s}}
	\right) 
	=
	f_i^{\mathbf{0}}(x)
	+
	\sum_{\mathbf{n} > \mathbf{0}_{\vert \tset \vert}}
	a_{\mathbf{n}}
	\prod_{j\in \tset}
	\left(
	\sum_{c\in\mathbf{s}_j} w_c x_c
	\right)^{n_j},
	\label{lemma:power_poly_multi}
	\end{align*}
	with $ \{a_{\mathbf{n}}\}_{\mathbf{n} > \mathbf{0}_{\vert \tset \vert}} $ with finite support,	the coupling components $ \{f^{\mathbf{k}}\}_{\mathbf{k} > \mathbf{0}_{\vert \tset\vert}} $ are given according to
	\begin{align*}
	f_i^{\mathcal{K}(\mathbf{s})}
	\left(x; 
	\mathbf{w}_{\mathbf{s}},
	\mathbf{x}_{\mathbf{s}}
	\right)
	&=
	\sum_{\mathbf{n} > \mathbf{0}_{\vert \tset \vert}}
	a_{\mathbf{n}}
	\prod_{j\in \tset}
	n_j!
	\sum_{
		\substack{
			\mathbf{m} \geq \mathbf{1}_{\vert \mathbf{s}_j \vert}\\
			\vert \mathbf{m} \vert = n_j
		}
	}
	\prod_{c\in\mathbf{s}_j}
	\frac{(w_c x_c)^{m_c}}{m_c!}.
	\end{align*}	
	\label{cor:polynomial_multi}
	\hfill$ \square $
\end{exmp}
The following example illustrates how an oracle component for multi-type networks can have the form of \cref{cor:polynomial_multi} while being constructed in a more natural manner.
\begin{exmp}
Consider multi-type networks such that
\begin{align*}
\hat{f}_i
\left(x;
\mathbf{w}_{\mathbf{s}}
,
\mathbf{x}_{\mathbf{s}}
\right) 
=
f_i^{\mathbf{0}}(x)
+
F\left(
\sum_{j\in\tset}
F_j\left(
\sum_{c\in\mathbf{s}_j} w_c x_c
\right)
\right).
\end{align*}
with $ F(X) = \sum_{n=1}^{N} a_n X^n $ and $ F_j(X) = \sum_{n=1}^{N_j} a_n^j X^n $, for all $ j\in\tset $. Then, we have that
\begin{align*}
F\left(
\sum_{j\in\tset}
F_j\left(
\sum_{c\in\mathbf{s}_j} w_c x_c
\right)
\right)
&=
\sum_{n=1}^{N}
a_n
\left(
\sum_{j\in\tset}
F_j\left(
\sum_{c\in\mathbf{s}_j} w_c x_c
\right)
\right)^{n}
\\
&=
\sum_{n=1}^{N}
a_n
\sum_{
	\substack{
	\mathbf{m} \geq \mathbf{0}_{\vert\tset\vert}\\
	\vert \mathbf{m} \vert = n
	}
}
\binom{n}{\mathbf{m}}
\prod_{j\in\tset}
F_j\left(
\sum_{c\in\mathbf{s}_j} w_c x_c
\right)^{m_j}
\\
&=
\sum_{n=1}^{N}
a_n
n!
\sum_{
	\substack{
		\mathbf{m} \geq \mathbf{0}_{\vert\tset\vert}\\
		\vert \mathbf{m} \vert = n
	}
}
\prod_{j\in\tset}
\frac{1}{m_j!}
F_j\left(
\sum_{c\in\mathbf{s}_j} w_c x_c
\right)^{m_j}
\\
&=
\sum_{n=1}^{N}
a_n
n!
\sum_{
	\substack{
		\mathbf{m} \geq \mathbf{0}_{\vert\tset\vert}\\
		\vert \mathbf{m} \vert = n
	}
}
\prod_{j\in\tset}
\frac{1}{m_j!}
\left(
\sum_{l=1}^{N_j}
a_{l}^{j}
\left(
\sum_{c\in\mathbf{s}_j} w_c x_c
\right)^l
\right)^{m_j}.
\end{align*}
Note that the product of polynomials can be obtained by the convolution of their coefficients. Therefore, raising a polynomial to the power $ n $ is equivalent to convolving its coefficients with themselves $ n $ times. In particular, $ (\sum_{n=1}^{N} a_n X^n)^m $ can be written as $ \sum_{n=m}^{Nm} b_n X^n $, with
\begin{align*}
b_n =
\sum_{
	\substack{
		\mathbf{l} \geq \mathbf{1}_{m}\\
		\mathbf{l} \leq N \, \mathbf{1}_{m}\\
		\vert \mathbf{l} \vert = n
	}
}
\prod_{i=1}^{m} 
a_{l_i}.
\end{align*}
Therefore, for any fixed $ \mathbf{n} > \mathbf{0}_{\vert \tset \vert} $, the coefficient $ a_{\mathbf{n}} $ associated with $ \prod_{j\in \tset}
\left(
\sum_{c\in\mathbf{s}_j} w_c x_c
\right)^{n_j} $ as in \cref{cor:polynomial_multi} is given by
\begin{align*}
a_{\mathbf{n}} = 
\sum_{n=1}^{N}
a_n n!
\sum_{
	\substack{
		\mathbf{m} \geq \mathbf{0}_{\vert\tset\vert}\\
		\vert \mathbf{m} \vert = n
	}
}
\prod_{j\in\tset}
\left(
\frac{1}{m_j !}
\sum_{
	\substack{
		\mathbf{l} \geq \mathbf{1}_{m_j}\\
		\mathbf{l} \leq N_j \, \mathbf{1}_{m_j}\\
		\vert \mathbf{l} \vert = n_j
	}
}
\prod_{i=1}^{m_j} 
a_{l_i}^{j}
\right).
\end{align*}
Note that the outer function $ F $ is the one responsible for the existence of non-zero coupling components with mixed typing. Consider, for instance, $ N =1 $. Then, $ a_{\mathbf{n}} $ with $ \mathbf{n} > \mathbf{0}_{\vert \tset \vert} $ can only be non-zero whenever $ \mathbf{n} = a \, 1_j $ for some $ j\in\tset $. The reason is that the only way for the innermost sum to be non-zero whenever $ m_j =0 $, is for $ n_j $ to be zero as well. In that situation, we have a sum over one valid index (the $ 0 $-tuple) of an empty product, which results in $ 1 $. Similarly, if we consider $ N=2 $, then, $ a_{\mathbf{n}} $ with $ \mathbf{n} > \mathbf{0}_{\vert \tset \vert} $ can only be non-zero whenever $ \mathbf{n} = a \, 1_j + b\, 1_k $ for some $ j,k\in\tset $, and so on.
\hfill$ \square $
\end{exmp}
We now introduce the second composition scheme.
\section{Decomposition into basis components} \label{sec:decomp_basis_comp}
In \cref{sec:decomp_coupl_comp} we introduced a scheme that decomposes any given oracle component $ \hat{f}_i \in \hat{\mathcal{F}}_i $ into a family of coupling components $ \{f_i^{\mathbf{k}}\}_{\mathbf{k} \geq \mathbf{0}_{\vert\tset\vert}} $ that have the properties described in \cref{thm:decomp_f}. With that, one can easily verify in a very systematic way if some function $ \hat{f}_i $ that is used to model the behavior of cells in a network satisfies the properties given by \cref{defi:oracle}.\\
Although this decomposition works well for verification, it is lacking from the perspective of design. The reason for this is the \cref{thm:decomp_f_dependence} of \cref{thm:decomp_f}. It forces all coupling components $ \{f_i^{\mathbf{k}}\}_{\mathbf{k} \geq \mathbf{0}_{\vert\tset\vert}} $ to be interdependent. Therefore, it is not clear at all what are exactly the degrees of freedom that are available for us, nor how one would even start when choosing such functions.\\
In this section, we use the previous decomposition as an essential stepping stone in order to create another with more desirable properties.\\
For that, we require the use of the multiplicity notation and also Stirling numbers of the first and second kinds, which we now describe.
\subsection{Multiplicity notation}
We now introduce the multiplicity notation, which simplifies the following work.\\
By $ \mathbf{m}\mathbf{w}_{\mathbf{s}} $, with $ \mathbf{m} \geq \mathbf{0}_{\vert \mathbf{s}\vert}$, we mean that each entry $ w_c $ of the vector $ \mathbf{w}_{\mathbf{s}} $ is expanded into $ m_c $ entries of the same value. For instance, consider
\begin{align*}
\mathbf{w}_{\mathbf{s}} 
= 
\begin{bmatrix}
w_a \\ w_b
\end{bmatrix}
,\quad
\mathbf{m}
=
\begin{bmatrix}
1 \\ 2
\end{bmatrix}.
\qquad \text{Then, } 
\mathbf{m}\mathbf{w}_{\mathbf{s}}  
= 
\begin{bmatrix}
w_a \\ w_b \\ w_b
\end{bmatrix}.
\end{align*}
Note that the number of elements in the resulting vector $ \mathbf{m}\mathbf{w}_{\mathbf{s}} $ is $ \vert \mathbf{m}\mathbf{s} \vert = \vert \mathbf{m} \vert $, which in this case is $ 3 $. Moreover, multiplicities can be composed. That is, we can apply some $ \overline{\mathbf{m}} $ to the previous $ \mathbf{m}\mathbf{w}_{\mathbf{s}} $, in order to obtain $ \overline{\mathbf{m}}\mathbf{m}\mathbf{w}_{\mathbf{s}}$, which requires $ \overline{\mathbf{m}} \geq  \mathbf{0}_{\vert \mathbf{m} \vert} $. For instance, we could have
\begin{align*}
\overline{\mathbf{m}}
=
\begin{bmatrix}
2 \\ 1 \\ 2
\end{bmatrix},
\quad
\overline{\mathbf{m}}\mathbf{m}\mathbf{w}_{\mathbf{s}}= 
\begin{bmatrix}
w_a\\ w_a \\ 
\hline
w_b \\ 
\hline
w_b \\ w_b
\end{bmatrix},
\end{align*}
where the horizontal bars are just for illustration purposes in order to make the expansion of $ \mathbf{m}\mathbf{w}_{\mathbf{s}}  $ into $ \overline{\mathbf{m}}\mathbf{m}\mathbf{w}_{\mathbf{s}} $ clearer. Note that $ \vert \overline{\mathbf{m}}\mathbf{m}\mathbf{s} \vert = \vert \overline{\mathbf{m}} \vert = 5 $. Moreover, applying the successive multiplicities ($ \overline{\mathbf{m}} $ after $ \mathbf{m} $) is equivalent to applying a single multiplicity $ \mathbf{M} $, in our case, we have,
\begin{align*}
\mathbf{M}
=
\begin{bmatrix}
2 \\ 3
\end{bmatrix},
\quad
\mathbf{M}\mathbf{w}_{\mathbf{s}} 
=
\begin{bmatrix}
w_a\\ w_a \\ 
\hline
w_b \\ w_b \\ w_b
\end{bmatrix}.
\end{align*}
Note that $ \vert \mathbf{M} \vert = \vert \overline{\mathbf{m}} \vert = 5 $. We say that $ \mathbf{M} = \overline{\mathbf{m}}\mathbf{m} $, where $ \overline{\mathbf{m}}\mathbf{m} $ is the composition of the two multiplicities $ \overline{\mathbf{m}}$ and $\mathbf{m} $. This should not be confused with extending $ \mathbf{m} $ according to $ \overline{\mathbf{m}} $, which has a completely different meaning.
In our example, we have
\begin{align*}
\overline{\mathbf{m}}\mathbf{m}
=
\begin{bmatrix}
2 \\ 
\hline
1 \\ 2
\end{bmatrix}
\begin{bmatrix}
1 \\ 2
\end{bmatrix}
=
\begin{bmatrix}
2 \\ 3
\end{bmatrix}
=
\mathbf{M}.
\end{align*}
The $ \vert \mathbf{m} \vert $ entries of $ \overline{\mathbf{m}} $ can be divided according to the values of $ \mathbf{m} $, which in this case is a first block with one element and a second block with two elements. Note that each block will affect a different element of the original vector we are applying $ \overline{\mathbf{m}}\mathbf{m}$ to (e.g., $ \mathbf{w}_{\mathbf{s}} $), that is, each element of the $ i^{th} $ block of $ \overline{\mathbf{m}} $ expands the $ i^{th} $ element of $ \mathbf{w}_{\mathbf{s}}  $ that amount of times. In conclusion, to find the equivalent multiplicity $ \mathbf{M} $ we just need to sum each block of the multiplicity $ \overline{\mathbf{m}} $, in which, the blocks are defined according to $ \mathbf{m} $.
\subsection{Stirling numbers}
The Stirling numbers of the first and second kinds are integers that appear in combinatorics, in particular when studying partitions and permutations \cite{comtet2012advanced}. In this section we will define them with respect to their recurrence relations.
The related results that will be used in the sections are presented and proved in \cref{apend:decomposition_finite_intermediate_results}.
\begin{defi}
	The unsigned Stirling numbers of the first kind, $ \stirst{n}{k} $, with $ n,k \geq 0$, are given by the recurrence relation
	\begin{align*}
	\stirst{n}{k} = (n-1) \stirst{n-1}{k} + \stirst{n-1}{k-1}
	,\quad n,k>0,
	\end{align*}
	together with the boundary conditions
	\begin{align*}
	\stirst{0}{0} &= 1,\\
	\stirst{0}{k} &= 0, \quad k > 0,\\
	\stirst{n}{0} &= 0, \quad n > 0.
	\end{align*}
	\label{defi:stirling_1}
	\hfill$ \square $
\end{defi}
\begin{defi}
	The Stirling numbers of the second kind, $ \stirnd{n}{k} $, with $ n,k \geq 0$, are given by the recurrence relation
	\begin{align*}
	\stirnd{n}{k} = k \stirnd{n-1}{k} + \stirnd{n-1}{k-1}
	,\quad n,k>0,
	\end{align*}
	together with the boundary conditions
	\begin{align*}
	\stirnd{0}{0} &= 1,\\
	\stirnd{0}{k} &= 0, \quad k > 0,\\
	\stirnd{n}{0} &= 0, \quad n > 0.
	\end{align*}
	\label{defi:stirling_2}
	\hfill$ \square $
\end{defi}
\subsection{Finite coupling order}
\label{subsec:finite_order}
We denote by $ \hat{\mathcal{F}}_i^{<\infty} $ the subset of elements in $ \hat{\mathcal{F}}_i $ whose set of coupling components $ \{f_i^{\mathbf{k}}\}_{\mathbf{k} \geq \mathbf{0}_{\vert\tset\vert}} $ has only finitely many non-zero terms. From \cref{lemma:coupling_additivity}, this forms a subspace. We now show that we can represent the elements of $ \hat{\mathcal{F}}_i^{<\infty} $ by a set of functions $ \{\leftidx{^b}{}f_i^{\mathbf{k}}\}_{\mathbf{k} \geq \mathbf{0}_{\vert\tset\vert}} $, called \textbf{basis components}, which have simpler properties than the coupling components $ \{f_i^{\mathbf{k}}\}_{\mathbf{k} \geq \mathbf{0}_{\vert\tset\vert}} $. In particular, they are decoupled from one another. These functions have the following structure.
\begin{defi}
	\label{defi:decomp_pure_basis}
	A \textbf{basis component} $ \leftidx{^b}{}f_i^{\mathbf{k}} $, with $ \mathbf{k}\geq \mathbf{0}_{\vert\tset\vert} $, is a function defined on
	\begin{align}
	\leftidx{^b}{}f_i^{\mathbf{k}} \colon
	\xset_i\times
	\mathcal{M}_i^{\mathbf{k}} \times \xset^{\mathbf{k}}
	\to \yset_i, 
	\end{align}
	such that:
	\begin{enumerate}
		\item \label{thm:basis_f_permutation_invariance}If $ \permut $ is any \textbf{permutation} matrix (of appropriate dimension), then
		\begin{align}
		\leftidx{^b}{}f_i^{\mathbf{k}} \left(x;\mathbf{w},\mathbf{x}\right)
		=
		\leftidx{^b}{}f_i^{\mathbf{k}} \left(x;\permut\mathbf{w},\permut\mathbf{x}\right).
		\label{eq:basis_perm_inv}
		\end{align}
		\item \label{thm:basis_f_additive} If $ k_j > 0 $, then $ \leftidx{^b}{}f_i^{\mathbf{k}} $ is \textbf{additive in the weights} with respect to type $ j $. That is,
		\begin{align}
		\leftidx{^b}{}f_i^{\mathbf{k}}
		\left(x; 
		\begin{bmatrix}
		w_{j_1} \| w_{j_2}\\ 
		\mathbf{w}_{\overline{\mathbf{s}}}
		\end{bmatrix},
		\begin{bmatrix}
		x_{j_{12}} \\ 
		\mathbf{x}_{\overline{\mathbf{s}}}
		\end{bmatrix}
		\right)
		=
		\leftidx{^b}{}f_i^{\mathbf{k}} 
		\left(x; 
		\begin{bmatrix}
		w_{j_1} \\ 
		\mathbf{w}_{\overline{\mathbf{s}}}
		\end{bmatrix},
		\begin{bmatrix}
		x_{j_{12}} \\ 
		\mathbf{x}_{\overline{\mathbf{s}}}
		\end{bmatrix}
		\right)
		+
		\leftidx{^b}{}f_i^{\mathbf{k}} 
		\left(x; 
		\begin{bmatrix}
		w_{j_2}\\ 
		\mathbf{w}_{\overline{\mathbf{s}}}
		\end{bmatrix},
		\begin{bmatrix}
		x_{j_{12}} \\ 
		\mathbf{x}_{\overline{\mathbf{s}}}
		\end{bmatrix}
		\right).
		\end{align} 
	\end{enumerate}
	\hfill$ \square $
\end{defi}
\begin{corollary}\label{thm:basis_f_zero_kill}
	Given a basis component $ \leftidx{^b}{}f_i^{\mathbf{k}} $, if \textbf{any} of the entries of $ \mathbf{w} $ is $ 0_{ij} $ for some $ j\in\tset $, then
	\begin{align}
	\leftidx{^b}{}f_i^{\mathbf{k}} \left(x;\mathbf{w},\mathbf{x}\right)
	=
	0_{\yset_i}.
	\end{align}
	\hfill$ \square $
\end{corollary}
\begin{proof}
From \cref{thm:basis_f_additive} of \cref{defi:decomp_pure_basis}, we know that
\begin{align*}
\leftidx{^b}{}f_i^{\mathbf{k}}
\left(x; 
\begin{bmatrix}
w_{j_1} \| 0_{ij}\\ 
\mathbf{w}_{\overline{\mathbf{s}}}
\end{bmatrix},
\begin{bmatrix}
x_{j_{12}} \\ 
\mathbf{x}_{\overline{\mathbf{s}}}
\end{bmatrix}
\right)
=
\leftidx{^b}{}f_i^{\mathbf{k}} 
\left(x; 
\begin{bmatrix}
w_{j_1} \\ 
\mathbf{w}_{\overline{\mathbf{s}}}
\end{bmatrix},
\begin{bmatrix}
x_{j_{12}} \\ 
\mathbf{x}_{\overline{\mathbf{s}}}
\end{bmatrix}
\right)
+
\leftidx{^b}{}f_i^{\mathbf{k}} 
\left(x; 
\begin{bmatrix}
0_{ij}\\ 
\mathbf{w}_{\overline{\mathbf{s}}}
\end{bmatrix},
\begin{bmatrix}
x_{j_{12}} \\ 
\mathbf{x}_{\overline{\mathbf{s}}}
\end{bmatrix}
\right).
\end{align*}
Since $ w_{j_1} \| 0_{ij} = w_{j_1} $, this implies that
\begin{align*}
\leftidx{^b}{}f_i^{\mathbf{k}} 
\left(x; 
\begin{bmatrix}
	0_{ij}\\ 
	\mathbf{w}_{\overline{\mathbf{s}}}
\end{bmatrix},
\begin{bmatrix}
	x_{j_{12}} \\ 
	\mathbf{x}_{\overline{\mathbf{s}}}
\end{bmatrix}
\right)
=
0_{\yset_i}.
\end{align*}
The fact that this applies to every $ j\in\tset $, together with \cref{thm:basis_f_permutation_invariance} of \cref{defi:decomp_pure_basis}, proves the result for a zero in any entry of $ \mathbf{w} $.
\end{proof}
The following result assigns the appropriate basis components $ \{\leftidx{^b}{}f_i^{\mathbf{k}}\}_{\mathbf{k} \geq \mathbf{0}_{\vert\tset\vert}} $ to the elements of $ \hat{\mathcal{F}}_i^{<\infty} $ by relating them, bijectively, to the coupling components $ \{f_i^{\mathbf{k}}\}_{\mathbf{k} \geq \mathbf{0}_{\vert\tset\vert}} $.
We now use the following shorthand notation
\begin{align*}
f_i^{\mathcal{K}(\mathbf{m}\mathbf{s})}
\left(x;
\mathbf{m},
\mathbf{w}_{\mathbf{s}},
\mathbf{x}_{\mathbf{s}}
\right)
:=
f_i^{\mathcal{K}(\mathbf{m}\mathbf{s})}
\left(x;
\mathbf{m}\mathbf{w}_{\mathbf{s}},
\mathbf{m}\mathbf{x}_{\mathbf{s}}
\right).
\end{align*}
\begin{theorem}
	Assuming the related set $ \yset_i $ to be a vector space, there is a bijection between the set of coupling components $ \{f_i^{\mathbf{k}}\}_{\mathbf{k} \geq \mathbf{0}_{\vert\tset\vert}} $ of elements in $ \hat{\mathcal{F}}_i^{<\infty} $, and the set of basis components $ \{\leftidx{^b}{}f_i^{\mathbf{k}}\}_{\mathbf{k} \geq \mathbf{0}_{\vert\tset\vert}} $ with finitely many non-zero terms. In particular, this bijection is given by the following equivalent expressions,
	\begin{align}
	f_i^{\mathcal{K}(\mathbf{s})}\left(x;\mathbf{w}_{\mathbf{s}},\mathbf{x}_{\mathbf{s}}\right)
	= 
	\sum_{\mathbf{m} \geq \mathbf{1}_{\vert \mathbf{s} \vert}}
	\frac{1}{\prod_{c\in\mathbf{s}} m_c!}
	\leftidx{^b}{}f_i^{\mathcal{K}(\mathbf{m}\mathbf{s})}
	\left(x;
	\mathbf{m},
	\mathbf{w}_{\mathbf{s}},
	\mathbf{x}_{\mathbf{s}}
	\right),
	\label{eq:component_from_basis1_inf}
	\end{align}
	\begin{align}
	\leftidx{^b}{}f_i^{\mathcal{K}(\mathbf{s})}\left(x;\mathbf{w}_{\mathbf{s}},\mathbf{x}_{\mathbf{s}}\right)
	= 
	\sum_{\mathbf{m} \geq \mathbf{1}_{\vert \mathbf{s} \vert}}
	\frac{(-1)^{\vert\mathbf{m}\vert-\vert\mathbf{s}\vert}}{\prod_{c\in\mathbf{s}} m_c}
	f_i^{\mathcal{K}(\mathbf{m}\mathbf{s})}
	\left(x;
	\mathbf{m},
	\mathbf{w}_{\mathbf{s}},
	\mathbf{x}_{\mathbf{s}}
	\right),
	\label{eq:basis_from_component1_inf}
	\end{align}
	and in general for multiplicities $ \mathbf{m}\geq \mathbf{0}_{\vert \mathbf{s} \vert} $,
	\begin{align}
	f_i^{\mathcal{K}(\mathbf{m}\mathbf{s})}
	\left(x;
	\mathbf{m},
	\mathbf{w}_{\mathbf{s}},
	\mathbf{x}_{\mathbf{s}}
	\right)
	=
	\sum_{\mathbf{M} \geq \mathbf{0}_{\vert \mathbf{s}\vert}}
	\left(
	\prod_{c\in\mathbf{s}}
	\frac{m_c!}{ M_c!} \stirnd{M_c}{m_c}
	\right)
	\leftidx{^b}{}f_i^{\mathcal{K}(\mathbf{M}\mathbf{s})}
	\left(x;
	\mathbf{M},
	\mathbf{w}_{\mathbf{s}},
	\mathbf{x}_{\mathbf{s}}
	\right),
	\label{eq:component_from_basis1_inf_multi}
	\end{align}
	\begin{align}
	\leftidx{^b}{}f_i^{\mathcal{K}(\mathbf{m}\mathbf{s})}
	\left(x;
	\mathbf{m},
	\mathbf{w}_{\mathbf{s}},
	\mathbf{x}_{\mathbf{s}}
	\right)
	= 
	\sum_{\mathbf{M} \geq \mathbf{0}_{\vert \mathbf{s}\vert}}
	(-1)^{\vert\mathbf{M}\vert-\vert\mathbf{m}\vert}
	\left(
	\prod_{c\in\mathbf{s}}
	\frac{m_c!}{ M_c!} \stirst{M_c}{m_c}
	\right)
	f_i^{\mathcal{K}(\mathbf{M}\mathbf{s})}
	\left(x;
	\mathbf{M},
	\mathbf{w}_{\mathbf{s}},
	\mathbf{x}_{\mathbf{s}}
	\right).
	\label{eq:basis_from_component1_inf_multi}
	\end{align}
	\label{thm:decomposition_finite}
	\hfill$ \square $
\end{theorem}
In order to prove this, we require \cref{lemma:stirling_sums_multi_2,lemma:stirling_sums_alternated_multi_2,lemma:stirling_sums_multi_1,lemma:stirling_sums_alternated_multi_1,lemma:basis_comp_multi_expansion,lemma:coupl_comp_multi_expansion,lemma:comb_sum_coupling_to_basis_major}, which are proven in \cref{apend:decomposition_finite_intermediate_results}.
	\begin{lemma}
	For $ \mathbf{m},\mathbf{M} \geq \mathbf{0}_{k} $, with $ k\geq 0 $, we have that
	\begin{align}
	\sum_{ 
		\substack{
			\overline{\mathbf{m}} \geq \mathbf{1}_{\vert \mathbf{m} \vert}
			\\
			\overline{\mathbf{m}}\mathbf{m} = \mathbf{M}
		}
	}
	\frac{1}{
		\prod_{i=1}^{\vert \mathbf{m} \vert} \overline{m}_i 
	} 
	=
	\prod_{i=1}^{k}
	\frac{m_i!}{M_i!}
	\stirst{M_i}{m_i}.
	\label{eq:stirling_sums_multi_1}
	\end{align}
	\label{lemma:stirling_sums_multi_1}
	\hfill$ \square $
\end{lemma}
	\begin{lemma}
	For $ \mathbf{m},\mathbf{M} \geq \mathbf{0}_{k} $, with $ k\geq 0 $, we have that
	\begin{align}
	\sum_{ 
		\substack{
			\overline{\mathbf{m}} \geq \mathbf{1}_{\vert \mathbf{m} \vert}
			\\
			\overline{\mathbf{m}}\mathbf{m} = \mathbf{M}
		}
	}
	\frac{1}{
		\prod_{i=1}^{\vert \mathbf{m} \vert} \overline{m}_i !
	} 
	=
	\prod_{i=1}^{k}
	\frac{m_i!}{M_i!}
	\stirnd{M_i}{m_i}.
	\label{eq:stirling_sums_multi_2}
	\end{align}
	\label{lemma:stirling_sums_multi_2}
	\hfill$ \square $
\end{lemma}
	\begin{lemma}
	For $ \mathbf{M} \geq \mathbf{0}_{k} $, with $ k\geq 0 $, we have that
	\begin{align}
	\sum_{\mathbf{m} \geq \mathbf{1}_{k}}
	\prod_{i=1}^{k}
	(-1)^{m_i}
	\stirst{M_i}{m_i}
	=
	\begin{cases}
	(-1)^{k} & \text{if $ \mathbf{M} = \mathbf{1}_{k} $},\\
	0 & \text{otherwise}.
	\end{cases} 
	\label{eq:stirling_sums_alternated_multi_1}
	\end{align}
	\label{lemma:stirling_sums_alternated_multi_1}
	\hfill$ \square $
\end{lemma}
	\begin{lemma}
	For $ \mathbf{M} \geq \mathbf{0}_{k} $, with $ k\geq 0 $, we have that
	\begin{align}
	\sum_{\mathbf{m} \geq \mathbf{1}_{k}}
	\prod_{i=1}^{k}
	(-1)^{m_i}
	(m_i -1)!
	\stirnd{M_i}{m_i}
	=
	\begin{cases}
	(-1)^{k} & \text{if $ \mathbf{M} = \mathbf{1}_{k} $},\\
	0 & \text{otherwise}.
	\end{cases} 
	\label{eq:stirling_sums_alternated_multi_2}
	\end{align}
	\label{lemma:stirling_sums_alternated_multi_2}
	\hfill$ \square $
\end{lemma}
\begin{lemma}
	\label{lemma:basis_comp_multi_expansion}
	Consider a function $ \leftidx{^b}{}f_i^{\mathbf{k}} $ with the properties in \cref{defi:decomp_pure_basis}, for some $ \mathbf{k}\geq \mathbf{0}_{\vert\tset\vert} $. For every $ m_{12} \geq 0, \overline{\mathbf{m}} \geq  \mathbf{0}_{\vert \overline{\mathbf{s}} \vert}$, such that $ \mathbf{k} = \mathcal{K}(\overline{\mathbf{m}}\overline{\mathbf{s}})+m_{12} \, 1_j $, we have that
	\begin{align}
	\label{eq:basis_component_multiplicity_expansion}
	\leftidx{^b}{}f_i^{\mathbf{k}}
	\left(x;
	\begin{bmatrix}
	m_{12}\\
	\overline{\mathbf{m}}
	\end{bmatrix},
	\begin{bmatrix}
	w_{j_1} \| w_{j_2}\\ 
	\mathbf{w}_{\overline{\mathbf{s}}}
	\end{bmatrix},
	\begin{bmatrix}
	x_{j_{12}} \\
	\mathbf{x}_{\overline{\mathbf{s}}}
	\end{bmatrix}
	\right)
	=
	\sum_{
		\substack{
			m_1,m_2 \geq 0\\
			m_1 + m_2 = m_{12}	
		}
	}
	\binom{m_{12}}{m_1,m_2}
	\leftidx{^b}{}		f_i^{\mathbf{k}}
	\left(x;
	\begin{bmatrix}
	m_{1}\\
	m_{2}\\
	\overline{\mathbf{m}}
	\end{bmatrix}
	,
	\begin{bmatrix}
	w_{j_1}\\
	w_{j_2}\\ 
	\mathbf{w}_{\overline{\mathbf{s}}}
	\end{bmatrix}
	,
	\begin{bmatrix}
	x_{j_{12}} \\ 
	x_{j_{12}} \\
	\mathbf{x}_{\overline{\mathbf{s}}}
	\end{bmatrix}
	\right).
	\end{align}
	\hfill$ \square $
\end{lemma}
	\begin{lemma}\label{lemma:coupl_comp_multi_expansion}
	Consider a family of functions $ \{f_i^{\mathbf{k}}\}_{\mathbf{k} \geq\mathbf{0}_{\vert\tset\vert}} $ with the properties in \cref{thm:decomp_f}, for some $ \mathbf{k}\geq \mathbf{0}_{\vert\tset\vert} $. For every $ m_{12} \geq 0, \overline{\mathbf{m}} \geq  \mathbf{0}_{\vert \overline{\mathbf{s}} \vert}$, such that $ \overline{\mathbf{k}} = \mathcal{K}(\overline{\mathbf{m}}\overline{\mathbf{s}}) $, we have that
	\begin{align}
	\label{eq:coupl_component_multiplicity_expansion}
	&f_i^{
		\overline{\mathbf{k}}
		+ m_{12} \, 1_j
	}
	\left(x;
	\begin{bmatrix}
	m_{12}\\
	\overline{\mathbf{m}}
	\end{bmatrix}
	,
	\begin{bmatrix}
	w_{j_1} \| w_{j_2}\\ 
	\mathbf{w}_{\overline{\mathbf{s}}}
	\end{bmatrix}
	,
	\begin{bmatrix}
	x_{j_{12}} \\
	\mathbf{x}_{\overline{\mathbf{s}}}
	\end{bmatrix}
	\right)
	\\
	&\quad=
	\sum_{ 
		\substack{
			m_1,m_2 \geq 0\\
			m_1,m_2 \leq m_{12}\\
			m_1 + m_2 \geq m_{12}
		}
	}
	B(m_1,m_2,m_{12}) 
	f_i^{
		\overline{\mathbf{k}}
		+ (m_{1}+m_{2}) \, 1_j 
	}
	\left(x;
	\begin{bmatrix}
	m_{1}\\
	m_{2}\\
	\overline{\mathbf{m}}
	\end{bmatrix}
	,
	\begin{bmatrix}
	w_{j_1} \\ w_{j_2}\\ 
	\mathbf{w}_{\overline{\mathbf{s}}}
	\end{bmatrix}
	,
	\begin{bmatrix}
	x_{j_{12}} \\ 
	x_{j_{12}} \\
	\mathbf{x}_{\overline{\mathbf{s}}}
	\end{bmatrix}
	\right),
	\notag
	\end{align}
	where $ B(m_1,m_2,m_{12}) $ is defined as
	\begin{align*}
	B(m_1,m_2,m_{12})
	&:=
	\binom{m_{12}}{m_{12} - m_1,m_{12} - m_2,m_1 + m_2 - m_{12}}.
	\end{align*}
	\hfill$ \square $
\end{lemma}
	\begin{lemma}
	For every $ m_1,m_2 \in \mathbb{N}_0 $, we have that
	\begin{align}
	\sum_{
		\substack{
			n \geq 1, m_1, m_2\\
			n \leq m_1 + m_2
		}
	}
	\frac{(-1)^n}{n}  \binom{n}{n-m_1,n-m_2, m_1+m_2-n}
	=
	\begin{cases}
	\frac{(-1)^{m_1}}{m_1} & \text{if $m_1\geq 1, m_2=0$},\\
	\frac{(-1)^{m_2}}{m_2} & \text{if $m_1 = 0, m_2\geq 1$},\\
	0 & \text{otherwise}.
	\end{cases}
	\label{eq:comb_sum_coupling_to_basis_major}
	\end{align}
	\label{lemma:comb_sum_coupling_to_basis_major}.
	\hfill$ \square $
\end{lemma}
\begin{proof}[Proof of \cref{thm:decomposition_finite}]
Firstly, we prove that if both $ \{f_i^{\mathbf{k}}\}_{\mathbf{k} \geq \mathbf{0}_{\vert\tset\vert}} $ and $ \{\leftidx{^b}{}f_i^{\mathbf{k}}\}_{\mathbf{k} \geq \mathbf{0}_{\vert\tset\vert}} $ have finitely many non-zero terms, then, \cref{eq:component_from_basis1_inf,eq:basis_from_component1_inf,eq:component_from_basis1_inf_multi,eq:basis_from_component1_inf_multi} are all equivalent. Note that the assumption implies that all the sums indexed at $ \mathbf{m} \geq \mathbf{1}_{\vert \mathbf{s} \vert} $ and $ \mathbf{M} \geq \mathbf{0}_{\vert \mathbf{s}\vert} $ have finite support. That is, they are actually finite sums in disguise and there are no convergence issues.\\
We now prove the equivalence of \cref{eq:component_from_basis1_inf,eq:basis_from_component1_inf,eq:component_from_basis1_inf_multi,eq:basis_from_component1_inf_multi} by proving the cycle of implications \cref{eq:component_from_basis1_inf} $ \implies $ \cref{eq:component_from_basis1_inf_multi} $ \implies $ \cref{eq:basis_from_component1_inf} $ \implies $ \cref{eq:basis_from_component1_inf_multi} $ \implies $ \cref{eq:component_from_basis1_inf}.\\
Assume \cref{eq:component_from_basis1_inf}.
Direct substitution gives us
\begin{align*}
f_i^{\mathcal{K}(\mathbf{m}\mathbf{s})}\left(x;
\mathbf{m},
\mathbf{w}_{\mathbf{s}},
\mathbf{x}_{\mathbf{s}}
\right)
&= 
\sum_{\overline{\mathbf{m}} \geq \mathbf{1}_{\vert \mathbf{m} \vert}}
\frac{1}{\prod_{i=1}^{\vert \mathbf{m} \vert} \overline{m}_i!}
\leftidx{^b}{}f_i^{\mathcal{K}(\overline{\mathbf{m}}\mathbf{m}\mathbf{s})}
\left(x;
\overline{\mathbf{m}}\mathbf{m},
\mathbf{w}_{\mathbf{s}},
\mathbf{x}_{\mathbf{s}}
\right).
\end{align*}
Since we are dealing with finite sums, we can freely reorder the terms such that we merge together the pairs $ (\overline{\mathbf{m}},\mathbf{m}) $ such that $ \overline{\mathbf{m}}\mathbf{m} = \mathbf{M} $. That is,
\begin{align*}
f_i^{\mathcal{K}(\mathbf{m}\mathbf{s})}\left(x;
\mathbf{m},
\mathbf{w}_{\mathbf{s}},
\mathbf{x}_{\mathbf{s}}
\right)
&= 
\sum_{\mathbf{M} \geq \mathbf{0}_{\vert \mathbf{s}\vert}}
\sum_{ 
	\substack{
		\overline{\mathbf{m}} \geq \mathbf{1}_{\vert \mathbf{m} \vert}
		\\
		\overline{\mathbf{m}}\mathbf{m} = \mathbf{M}
	}
}
\frac{1}{\prod_{i=1}^{\vert \mathbf{m} \vert} \overline{m}_i!}
\leftidx{^b}{}f_i^{\mathcal{K}(\mathbf{M}\mathbf{s})}
\left(x;
\mathbf{M},
\mathbf{w}_{\mathbf{s}},
\mathbf{x}_{\mathbf{s}}
\right),
\end{align*}
which from \cref{lemma:stirling_sums_multi_2} simplifies into \cref{eq:component_from_basis1_inf_multi}. Therefore, \cref{eq:component_from_basis1_inf} $ \implies $ \cref{eq:component_from_basis1_inf_multi}.\\
We now assume \cref{eq:component_from_basis1_inf_multi}. Using this on the right hand side of \cref{eq:basis_from_component1_inf} we get
\begin{align*}
&\sum_{\mathbf{m} \geq \mathbf{1}_{\vert \mathbf{s} \vert}}
\frac{(-1)^{\vert\mathbf{m}\vert-\vert\mathbf{s}\vert}}{\prod_{c\in\mathbf{s}} m_c}
f_i^{\mathcal{K}(\mathbf{m}\mathbf{s})}
\left(x;
\mathbf{m},
\mathbf{w}_{\mathbf{s}},
\mathbf{x}_{\mathbf{s}}
\right)
\\
&=
\sum_{\mathbf{m} \geq \mathbf{1}_{\vert \mathbf{s} \vert}}
\frac{(-1)^{\vert\mathbf{m}\vert-\vert\mathbf{s}\vert}}{\prod_{c\in\mathbf{s}} m_c}
\sum_{\mathbf{M} \geq \mathbf{0}_{\vert \mathbf{s}\vert}}
\left(
\prod_{c\in\mathbf{s}}
\frac{m_c!}{ M_c!} \stirnd{M_c}{m_c}
\right)
\leftidx{^b}{}f_i^{\mathcal{K}(\mathbf{M}\mathbf{s})}
\left(x;
\mathbf{M},
\mathbf{w}_{\mathbf{s}},
\mathbf{x}_{\mathbf{s}}
\right).
\end{align*}
Exchanging the order of the two sums and simplifying we get
\begin{align*}
\sum_{\mathbf{M} \geq \mathbf{0}_{\vert \mathbf{s}\vert}}
\frac{(-1)^{\vert\mathbf{s}\vert}}{\prod_{c\in\mathbf{s}} M_c!}
\left(
\sum_{\mathbf{m} \geq \mathbf{1}_{\vert \mathbf{s} \vert}}
\prod_{c\in\mathbf{s}}
(-1)^{m_c}
(m_c-1)! \stirnd{M_c}{m_c}
\right)
\leftidx{^b}{}f_i^{\mathcal{K}(\mathbf{M}\mathbf{s})}
\left(x;
\mathbf{M},
\mathbf{w}_{\mathbf{s}},
\mathbf{x}_{\mathbf{s}}
\right),
\end{align*}
which from \cref{lemma:stirling_sums_alternated_multi_2} simplifies into $ \leftidx{^b}{}f_i^{\mathcal{K}(\mathbf{s})}\left(x;\mathbf{w}_{\mathbf{s}},\mathbf{x}_{\mathbf{s}}\right) $, the left hand side of \cref{eq:basis_from_component1_inf}. Therefore, \cref{eq:component_from_basis1_inf_multi} $ \implies $ \cref{eq:basis_from_component1_inf}.\\
We now assume \cref{eq:basis_from_component1_inf}. Direct substitution gives us
\begin{align*}
\leftidx{^b}{}f_i^{\mathcal{K}(\mathbf{m}\mathbf{s})}\left(x;
\mathbf{m},
\mathbf{w}_{\mathbf{s}},
\mathbf{x}_{\mathbf{s}}
\right)
= 
\sum_{\overline{\mathbf{m}} \geq \mathbf{1}_{\vert \mathbf{m} \vert}}
\frac{(-1)^{\vert\overline{\mathbf{m}}\vert-\vert\mathbf{m}\vert}}{\prod_{i=1}^{\vert \mathbf{m} \vert} \overline{m}_i}
f_i^{\mathcal{K}(\overline{\mathbf{m}}\mathbf{m}\mathbf{s})}
\left(x;
\overline{\mathbf{m}}\mathbf{m},
\mathbf{w}_{\mathbf{s}},
\mathbf{x}_{\mathbf{s}}
\right).
\end{align*}
Merging together the pairs $ (\overline{\mathbf{m}},\mathbf{m}) $ such that $ \overline{\mathbf{m}}\mathbf{m} = \mathbf{M} $, we obtain
\begin{align*}
\leftidx{^b}{}f_i^{\mathcal{K}(\mathbf{m}\mathbf{s})}
\left(x;
\mathbf{m},
\mathbf{w}_{\mathbf{s}},
\mathbf{x}_{\mathbf{s}}
\right)
=
\sum_{\mathbf{M} \geq \mathbf{0}_{\vert \mathbf{s}\vert}}
(-1)^{\vert\mathbf{M}\vert-\vert\mathbf{m}\vert}
\sum_{ 
	\substack{
		\overline{\mathbf{m}} \geq \mathbf{1}_{\vert \mathbf{m} \vert}
		\\
		\overline{\mathbf{m}}\mathbf{m} = \mathbf{M}
	}
}
\frac{1}{\prod_{i=1}^{\vert \mathbf{m} \vert} \overline{m}_i}
f_i^{\mathcal{K}(\mathbf{M}\mathbf{s})}
\left(x;
\mathbf{M},
\mathbf{w}_{\mathbf{s}},
\mathbf{x}_{\mathbf{s}}
\right).
\end{align*}
Note that $ \vert \overline{\mathbf{m}} \vert = \vert \mathbf{M} \vert $. From \cref{lemma:stirling_sums_multi_1}, this simplifies into \cref{eq:basis_from_component1_inf_multi}. Therefore, \cref{eq:basis_from_component1_inf} $ \implies $ \cref{eq:basis_from_component1_inf_multi}.\\
Finally, we assume \cref{eq:basis_from_component1_inf_multi}. Using this on the right hand side of \cref{eq:component_from_basis1_inf} we get
\begin{align*}
&\sum_{\mathbf{m} \geq \mathbf{1}_{\vert \mathbf{s} \vert}}
\frac{1}{\prod_{c\in\mathbf{s}} m_c!}
\leftidx{^b}{}f_i^{\mathcal{K}(\mathbf{m}\mathbf{s})}
\left(x;
\mathbf{m},
\mathbf{w}_{\mathbf{s}},
\mathbf{x}_{\mathbf{s}}
\right)
\\
&=
\sum_{\mathbf{m} \geq \mathbf{1}_{\vert \mathbf{s} \vert}}
\frac{1}{\prod_{c\in\mathbf{s}} m_c!}
	\sum_{\mathbf{M} \geq \mathbf{0}_{\vert \mathbf{s}\vert}}
(-1)^{\vert\mathbf{M}\vert-\vert\mathbf{m}\vert}
\left(
\prod_{c\in\mathbf{s}}
\frac{m_c!}{ M_c!} \stirst{M_c}{m_c}
\right)
f_i^{\mathcal{K}(\mathbf{M}\mathbf{s})}
\left(x;
\mathbf{M},
\mathbf{w}_{\mathbf{s}},
\mathbf{x}_{\mathbf{s}}
\right).
\end{align*}
Exchanging the order of the two sums and simplifying we get
\begin{align*}
\sum_{\mathbf{M} \geq \mathbf{0}_{\vert \mathbf{s}\vert}}
\frac{(-1)^{\vert\mathbf{M}\vert}}{\prod_{c\in\mathbf{s}} M_c! }
\left(
\sum_{\mathbf{m} \geq \mathbf{1}_{\vert \mathbf{s} \vert}}
\prod_{c\in\mathbf{s}}
(-1)^{m_c} \stirst{M_c}{m_c}
\right)
f_i^{\mathcal{K}(\mathbf{M}\mathbf{s})}
\left(x;
\mathbf{M},
\mathbf{w}_{\mathbf{s}},
\mathbf{x}_{\mathbf{s}}
\right),
\end{align*}
which from \cref{lemma:stirling_sums_alternated_multi_1} simplifies into \cref{eq:component_from_basis1_inf}. This completes the proof that \cref{eq:component_from_basis1_inf,eq:basis_from_component1_inf,eq:component_from_basis1_inf_multi,eq:basis_from_component1_inf_multi} are equivalent under the assumption that both $ \{f_i^{\mathbf{k}}\}_{\mathbf{k} \geq \mathbf{0}_{\vert\tset\vert}} $ and $ \{\leftidx{^b}{}f_i^{\mathbf{k}}\}_{\mathbf{k} \geq \mathbf{0}_{\vert\tset\vert}} $ have finitely many non-zero terms. We now weaken this assumption by showing that one of them having finitely many non-zero terms implies the other also having that property.\\
Assume $ \{f_i^{\mathbf{k}}\}_{\mathbf{k} \geq \mathbf{0}_{\vert\tset\vert}} $ has finitely many terms. Then, there is some $ \mathbf{K} \geq \mathbf{0}_{\vert\tset\vert} $ such that all non-zero terms are inside the subset $ \{f_i^{\mathbf{k}}\}_{\mathbf{k} \leq \mathbf{K}} $. Note that the sums \cref{eq:basis_from_component1_inf} are always finite, which means that the corresponding $ \{\leftidx{^b}{}f_i^{\mathbf{k}}\}_{\mathbf{k} \geq \mathbf{0}_{\vert\tset\vert}} $ is well-defined. Furthermore, every $ \leftidx{^b}{}f_i^{\mathbf{k}} $ such that $ \mathbf{k} $ does not obey $ \mathbf{k} \leq \mathbf{K} $, is given by a sum of zero terms. Therefore, $ \{\leftidx{^b}{}f_i^{\mathbf{k}}\}_{\mathbf{k} \geq \mathbf{0}_{\vert\tset\vert}} $ also has all of its non-zero terms inside the subset 
$ \{\leftidx{^b}{}f_i^{\mathbf{k}}\}_{\mathbf{k} \leq \mathbf{K}} $, which means that it also has finitely many non-zero terms. Then, the previous assumptions are satisfied and consequently \cref{eq:component_from_basis1_inf,eq:basis_from_component1_inf,eq:component_from_basis1_inf_multi,eq:basis_from_component1_inf_multi} are equivalent.
\\
The exact same argument applies when starting with some $ \{\leftidx{^b}{}f_i^{\mathbf{k}}\}_{\mathbf{k} \geq \mathbf{0}_{\vert\tset\vert}} $ that has finitely many non-zero terms and constructing the corresponding $ \{f_i^{\mathbf{k}}\}_{\mathbf{k} \geq \mathbf{0}_{\vert\tset\vert}} $ through \cref{eq:component_from_basis1_inf}.\\
We now prove that $ \{f_i^{\mathbf{k}}\}_{\mathbf{k} \geq \mathbf{0}_{\vert\tset\vert}} $ has the properties in \cref{thm:decomp_f} if and only if the corresponding $ \{\leftidx{^b}{}f_i^{\mathbf{k}}\}_{\mathbf{k} \geq \mathbf{0}_{\vert\tset\vert}} $ has the properties in \cref{defi:decomp_pure_basis}.\\
Assume some $ \{\leftidx{^b}{}f_i^{\mathbf{k}}\}_{\mathbf{k} \geq \mathbf{0}_{\vert\tset\vert}} $ has the properties in \cref{defi:decomp_pure_basis}. Then, from \cref{eq:component_from_basis1_inf}, we have that for any permutation matrix $ \permut $,
\begin{align*}
f_i^{\mathcal{K}(\mathbf{s})}\left(x;\mathbf{w}_{\mathbf{s}},\mathbf{x}_{\mathbf{s}}\right)
&=
\sum_{\mathbf{m} \geq \mathbf{1}_{\vert \mathbf{s} \vert}}
\frac{1}{\prod_{c\in\mathbf{s}} m_c!}
\leftidx{^b}{}f_i^{\mathcal{K}(\mathbf{m}\mathbf{s})}
\left(x;
\mathbf{m},   
\mathbf{w}_{\mathbf{s}},
\mathbf{x}_{\mathbf{s}}
\right)
\\
&=
\sum_{\mathbf{m} \geq \mathbf{1}_{\vert \mathbf{s} \vert}}
\frac{1}{\prod_{c\in\mathbf{s}} m_c!}
\leftidx{^b}{}f_i^{\mathcal{K}(\left[\permut\mathbf{m}\right]\permut\mathbf{s})}
\left(x;
\permut\mathbf{m},
\permut\mathbf{w}_{\mathbf{s}},
\permut\mathbf{x}_{\mathbf{s}}
\right)
\\
&= 
\sum_{\overline{\mathbf{m}} \geq \mathbf{1}_{\vert \mathbf{s} \vert}}
\frac{1}{\prod_{c\in\mathbf{s}} \overline{m}_c!}
\leftidx{^b}{}f_i^{\mathcal{K}(\overline{\mathbf{m}}\permut\mathbf{s})}
\left(x;
\overline{\mathbf{m}},
\permut\mathbf{w}_{\mathbf{s}},
\permut\mathbf{x}_{\mathbf{s}}
\right)
\\
&=
f_i^{\mathcal{K}(\mathbf{s})}\left(x;\permut\mathbf{w}_{\mathbf{s}},\permut\mathbf{x}_{\mathbf{s}}\right),
\end{align*}
where $ \overline{\mathbf{m}} = \permut \mathbf{m} $ establishes a bijection between the sets of indexes $ \mathbf{m} \geq \mathbf{1}_{\vert \mathbf{s} \vert} $ and $ \overline{\mathbf{m}} \geq \mathbf{1}_{\vert \mathbf{s} \vert} $. Therefore, $ \{f_i^{\mathbf{k}}\}_{\mathbf{k} \geq \mathbf{0}_{\vert\tset\vert}} $ satisfies \cref{thm:decomp_f_permutation_invariance} of \cref{thm:decomp_f}.\\
Again from \cref{eq:component_from_basis1_inf}, we have that
\begin{align*}
f_i^{\mathcal{K}(\mathbf{s})}\left(x;
\begin{bmatrix}
w_{j_1} \| w_{j_2}\\ 
\mathbf{w}_{\overline{\mathbf{s}}}
\end{bmatrix},
\mathbf{x}_{\mathbf{s}}\right)
&= 
\sum_{\mathbf{m} \geq \mathbf{1}_{\vert \mathbf{s} \vert}}
\frac{1}{\prod_{c\in\mathbf{s}} m_c!}
\leftidx{^b}{}f_i^{\mathcal{K}(\mathbf{m}\mathbf{s})}
\left(x;
\mathbf{m},
\begin{bmatrix}
w_{j_1} \| w_{j_2}\\ 
\mathbf{w}_{\overline{\mathbf{s}}}
\end{bmatrix},
\mathbf{x}_{\mathbf{s}}
\right),
\end{align*}
with $ \mathbf{m}
=
\begin{bmatrix}
m_{12}\\
\overline{\mathbf{m}}
\end{bmatrix} $ and $ \mathbf{x}_{\mathbf{s}}
=
\begin{bmatrix}
x_{j_{12}} \\ 
\mathbf{x}_{\overline{\mathbf{s}}}
\end{bmatrix} $. Applying \cref{lemma:basis_comp_multi_expansion}, the right hand side expands into
\begin{align*}
\sum_{ 
	\substack{
		m_{12} \geq 1\\
		\overline{\mathbf{m}} \geq \mathbf{1}_{\vert \overline{\mathbf{s}} \vert}
	}
}
\frac{1}{m_{12}!\prod_{c\in\overline{\mathbf{s}}} \overline{m}_c!}
\sum_{
	\substack{
		m_1,m_2 \geq 0\\
		m_1 + m_2 = m_{12}	
	}
}
\frac{m_{12}!}{m_1! m_2!}
\leftidx{^b}{}f_i^{\mathbf{k}}
\left(x;
\begin{bmatrix}
m_{1}\\
m_{2}\\
\overline{\mathbf{m}}
\end{bmatrix}
,
\begin{bmatrix}
w_{j_1}\\
w_{j_2}\\ 
\mathbf{w}_{\overline{\mathbf{s}}}
\end{bmatrix}
,
\begin{bmatrix}
x_{j_{12}} \\ 
x_{j_{12}} \\
\mathbf{x}_{\overline{\mathbf{s}}}
\end{bmatrix}
\right).
\end{align*}
We cancel the $ m_{12}! $ terms and merge the two sums, which simplifies the expression into 
\begin{align*}
\sum_{ 
	\substack{
		m_1,m_2 \geq 0\\
		m_1+m_2 \geq 1\\
		\overline{\mathbf{m}} \geq \mathbf{1}_{\vert \overline{\mathbf{s}} \vert}
	}
}
\frac{1}{m_1! m_2!\prod_{c\in\overline{\mathbf{s}}} \overline{m}_c!}
\leftidx{^b}{}f_i^{\mathbf{k}}
\left(x;
\begin{bmatrix}
m_{1}\\
m_{2}\\
\overline{\mathbf{m}}
\end{bmatrix}
,
\begin{bmatrix}
w_{j_1}\\
w_{j_2}\\ 
\mathbf{w}_{\overline{\mathbf{s}}}
\end{bmatrix}
,
\begin{bmatrix}
x_{j_{12}} \\ 
x_{j_{12}} \\
\mathbf{x}_{\overline{\mathbf{s}}}
\end{bmatrix}
\right).
\end{align*}
We now split this sum into three parts. The first with $ m_1 \geq 1, m_2 =0 $, the second with $ m_1 = 0, m_2 \geq 1 $ and the third with $ m_1,m_2\geq 1 $. Applying \cref{eq:component_from_basis1_inf} again, gives us the three terms of the right hand side of \cref{thm:decomp_f_dependence} of \cref{thm:decomp_f}.\\
Finally, consider that some entry of $ \mathbf{w} $ is $ 0_{ij} $ for some $ j\in\tset $. Then, from \cref{thm:basis_f_zero_kill}, every term of the sum \cref{eq:component_from_basis1_inf} is zero, which means that $ \{f_i^{\mathbf{k}}\}_{\mathbf{k} \geq \mathbf{0}_{\vert\tset\vert}} $ satisfies \cref{thm:decomp_f_zero_kill} of \cref{thm:decomp_f}.\\
We now prove the converse direction. Assume some $ \{f_i^{\mathbf{k}}\}_{\mathbf{k} \geq \mathbf{0}_{\vert\tset\vert}} $ has the properties in \cref{thm:decomp_f}. Then, from \cref{eq:basis_from_component1_inf}, we have that for any permutation matrix $ \permut $,
\begin{align*}
\leftidx{^b}{}f_i^{\mathcal{K}(\mathbf{s})}\left(x;\mathbf{w}_{\mathbf{s}},\mathbf{x}_{\mathbf{s}}\right)
&= 
\sum_{\mathbf{m} \geq \mathbf{1}_{\vert \mathbf{s} \vert}}
\frac{(-1)^{\vert\mathbf{m}\vert-\vert\mathbf{s}\vert}}{\prod_{c\in\mathbf{s}} m_c}
f_i^{\mathcal{K}(\mathbf{m}\mathbf{s})}
\left(x;
\mathbf{m},
\mathbf{w}_{\mathbf{s}},
\mathbf{x}_{\mathbf{s}}
\right)
\\
&=
\sum_{\mathbf{m} \geq \mathbf{1}_{\vert \mathbf{s} \vert}}
\frac{(-1)^{\vert\mathbf{m}\vert-\vert\mathbf{s}\vert}}{\prod_{c\in\mathbf{s}} m_c}
f_i^{\mathcal{K}(\left[\permut\mathbf{m}\right]\permut\mathbf{s})}
\left(x;
\permut\mathbf{m},
\permut\mathbf{w}_{\mathbf{s}},
\permut\mathbf{x}_{\mathbf{s}}
\right)
\\
&= 
\sum_{\overline{\mathbf{m}} \geq \mathbf{1}_{\vert \mathbf{s} \vert}}
\frac{(-1)^{\vert\overline{\mathbf{m}}\vert-\vert\mathbf{s}\vert}}{\prod_{c\in\mathbf{s}} \overline{m}_c}
f_i^{\mathcal{K}(\overline{\mathbf{m}}\permut\mathbf{s})}
\left(x;
\overline{\mathbf{m}},
\permut\mathbf{w}_{\mathbf{s}},
\permut\mathbf{x}_{\mathbf{s}}
\right)
\\
&=
\leftidx{^b}{}f_i^{\mathcal{K}(\mathbf{s})}\left(x;\permut\mathbf{w}_{\mathbf{s}},\permut\mathbf{x}_{\mathbf{s}}\right),
\end{align*}
where $ \overline{\mathbf{m}} = \permut \mathbf{m} $ establishes a bijection between the sets of indexes $ \mathbf{m} \geq \mathbf{1}_{\vert \mathbf{s} \vert} $ and $ \overline{\mathbf{m}} \geq \mathbf{1}_{\vert \mathbf{s} \vert} $. Therefore, $ \{\leftidx{^b}{}f_i^{\mathbf{k}}\}_{\mathbf{k} \geq \mathbf{0}_{\vert\tset\vert}} $ satisfies \cref{thm:basis_f_permutation_invariance} of \cref{defi:decomp_pure_basis}.\\
Finally, we have that
\begin{align*}
\leftidx{^b}{}f_i^{\mathcal{K}(\mathbf{s})}
\left(x; 
\begin{bmatrix}
w_{j_1} \| w_{j_2}\\ 
\mathbf{w}_{\overline{\mathbf{s}}}
\end{bmatrix},
\mathbf{x}_{\mathbf{s}}
\right)
&=
\sum_{\mathbf{m} \geq \mathbf{1}_{\vert \mathbf{s} \vert}}
\frac{(-1)^{\vert\mathbf{m}\vert-\vert\mathbf{s}\vert}}{\prod_{c\in\mathbf{s}} m_c}
f_i^{\mathcal{K}(\mathbf{m}\mathbf{s})}
\left(x;
\mathbf{m}
,
\begin{bmatrix}
w_{j_1} \| w_{j_2}\\ 
\mathbf{w}_{\overline{\mathbf{s}}}
\end{bmatrix}
,
\mathbf{x}_{\mathbf{s}}
\right),
\end{align*}
with $ \mathbf{m}
=
\begin{bmatrix}
m_{12}\\
\overline{\mathbf{m}}
\end{bmatrix} $ and $ \mathbf{x}_{\mathbf{s}}
=
\begin{bmatrix}
x_{j_{12}} \\ 
\mathbf{x}_{\overline{\mathbf{s}}}
\end{bmatrix} $. Applying \cref{lemma:coupl_comp_multi_expansion}, the right hand side expands into
\begin{align*}
\sum_{ 
	\substack{
		m_{12} \geq 1\\
		\overline{\mathbf{m}} \geq \mathbf{1}_{\vert \overline{\mathbf{s}} \vert}
	}
}
\frac{(-1)^{\vert\overline{\mathbf{m}}\vert-\vert\mathbf{s}\vert}}{\prod_{c\in\overline{\mathbf{s}}} m_c}
\frac{(-1)^{m_{12}}}{m_{12}}
\sum_{ 
	\substack{
	m_1,m_2 \geq 0\\
	m_1,m_2 \leq m_{12}\\
	m_1 + m_2 \geq m_{12}
	}
}
B(m_1,m_2,m_{12})
f_i^{\mathcal{K}(\mathbf{m}\mathbf{s})}
\left(x;
\begin{bmatrix}
m_{1}\\
m_{2}\\
\overline{\mathbf{m}}
\end{bmatrix},
\begin{bmatrix}
w_{j_1} \\ w_{j_2}\\ 
\mathbf{w}_{\overline{\mathbf{s}}}
\end{bmatrix}
,
\begin{bmatrix}
x_{j_{12}} \\ 
x_{j_{12}} \\
\mathbf{x}_{\overline{\mathbf{s}}}
\end{bmatrix}
\right),
\end{align*}
with $ B(m_1,m_2,m_{12}) $ as defined in \cref{lemma:coupl_comp_multi_expansion}. This can be rearranged into
\begin{align*}
\scriptsize
\sum_{ 
	\substack{
		m_1,m_2 \geq 0\\
		\overline{\mathbf{m}} \geq \mathbf{1}_{\vert \overline{\mathbf{s}} \vert}
	}
}
\frac{(-1)^{\vert\overline{\mathbf{m}}\vert-\vert\mathbf{s}\vert}}{\prod_{c\in\overline{\mathbf{s}}} m_c}
\left[
\sum_{ 
	\substack{
		m_{12} \geq 1,m_1,m_2\\
		m_{12} \leq m_1 + m_2 
	}
}
\frac{(-1)^{m_{12}}}{m_{12}}
B(m_1,m_2,m_{12})
\right]
f_i^{\mathcal{K}(\mathbf{m}\mathbf{s})}
\left(x;
\begin{bmatrix}
m_{1}\\
m_{2}\\
\overline{\mathbf{m}}
\end{bmatrix}
,
\begin{bmatrix}
w_{j_1} \\ w_{j_2}\\ 
\mathbf{w}_{\overline{\mathbf{s}}}
\end{bmatrix}
,
\begin{bmatrix}
x_{j_{12}} \\ 
x_{j_{12}} \\
\mathbf{x}_{\overline{\mathbf{s}}}
\end{bmatrix}
\right).
\end{align*}
From \cref{lemma:comb_sum_coupling_to_basis_major}, this simplifies into
\begin{align*}
&
\sum_{ 
	\substack{
		m_1 \geq 1\\
		\overline{\mathbf{m}} \geq \mathbf{1}_{\vert \overline{\mathbf{s}} \vert}
	}
}
\frac{(-1)^{\vert\overline{\mathbf{m}}\vert+m_1-\vert\mathbf{s}\vert}}
{m_1\prod_{c\in\overline{\mathbf{s}}} m_c}
f_i^{\mathcal{K}(\mathbf{m}_1 \mathbf{s})}
\left(x;
\begin{bmatrix}
m_{1}\\
\overline{\mathbf{m}}
\end{bmatrix}
,
\begin{bmatrix}
w_{j_1} \\ 
\mathbf{w}_{\overline{\mathbf{s}}}
\end{bmatrix}
,
\mathbf{x}_{\mathbf{s}}
\right)
\\
&\quad+
\sum_{ 
	\substack{
		m_2 \geq 1\\
		\overline{\mathbf{m}} \geq \mathbf{1}_{\vert \overline{\mathbf{s}} \vert}
	}
}
\frac{(-1)^{\vert\overline{\mathbf{m}}\vert+m_2-\vert\mathbf{s}\vert}}
{m_2\prod_{c\in\overline{\mathbf{s}}} m_c}
f_i^{\mathcal{K}(\mathbf{m}_2 \mathbf{s})}
\left(x;
\begin{bmatrix}
m_{2}\\
\overline{\mathbf{m}}
\end{bmatrix}
,
\begin{bmatrix}
w_{j_2} \\ 
\mathbf{w}_{\overline{\mathbf{s}}}
\end{bmatrix}
,
\mathbf{x}_{\mathbf{s}}
\right)
\\
&\quad=
\leftidx{^b}{}f_i^{\mathcal{K}(\mathbf{s})}
\left(x; 
\begin{bmatrix}
w_{j_1} \\ 
\mathbf{w}_{\overline{\mathbf{s}}}
\end{bmatrix},
\mathbf{x}_{\mathbf{s}}
\right)
+
\leftidx{^b}{}f_i^{\mathcal{K}(\mathbf{s})}
\left(x; 
\begin{bmatrix}
w_{j_2} \\ 
\mathbf{w}_{\overline{\mathbf{s}}}
\end{bmatrix},
\mathbf{x}_{\mathbf{s}}
\right),
\end{align*}
with $ \mathbf{m}_1
=
\begin{bmatrix}
m_{1}\\
\overline{\mathbf{m}}
\end{bmatrix} $ and $ \mathbf{m}_2
=
\begin{bmatrix}
m_{2}\\
\overline{\mathbf{m}}
\end{bmatrix} $, which gives us the right hand side of \cref{thm:basis_f_additive} of \cref{defi:decomp_pure_basis}.
\end{proof}
An evident but important consequence of \cref{thm:decomposition_finite} is the following.
\begin{corollary}
	Consider a finite order $ \hat{f}_i \in \hat{\mathcal{F}}_i^{<\infty} $, with coupling components $ \{f_i^{\mathbf{k}}\}_{\mathbf{k} \geq \mathbf{0}_{\vert\tset\vert}} $ and with basis components $ \{\leftidx{^b}{}f_i^{\mathbf{k}}\}_{\mathbf{k} \geq \mathbf{0}_{\vert\tset\vert}} $. Then,
	\begin{align}
	f_i^{\mathbf{0}} = \leftidx{^b}{}f_i^{\mathbf{0}}.
	\end{align}
\hfill$ \square $
\end{corollary}
This can be generalized with the help of the following definition.
\begin{defi}
	Consider a family of functions $ \{f_i^{\mathbf{k}}\}_{\mathbf{k} \geq \mathbf{0}_{\vert\tset\vert}} $ defined on $ f_i^{\mathbf{k}} \colon
	\xset_i\times
	\mathcal{M}_i^{\mathbf{k}} \times \xset^{\mathbf{k}}
	\to \yset_i$.
	We say that a given index $ \mathbf{k} \geq \mathbf{0}_{\vert\tset\vert} $ is a \textbf{locally maximal order} if $ f_i^{\mathbf{k}} \neq 0_{\yset_i} $ and $ f_i^{\overline{\mathbf{k}}} = 0_{\yset_i} $ for all $ \overline{\mathbf{k}} > \mathbf{k} $ such that $ \mathbf{k}$ and $ \overline{\mathbf{k}} $ have zeros in the same entries.
	\label{defi:locally_maximal_order}
	\hfill$ \square $
\end{defi}
\begin{lemma}
	Consider a finite order $ \hat{f}_i \in \hat{\mathcal{F}}_i^{<\infty} $, with coupling components $ \{f_i^{\mathbf{k}}\}_{\mathbf{k} \geq \mathbf{0}_{\vert\tset\vert}} $ and with basis components $ \{\leftidx{^b}{}f_i^{\mathbf{k}}\}_{\mathbf{k} \geq \mathbf{0}_{\vert\tset\vert}} $.\\
	A given index $ \mathbf{k} \geq \mathbf{0}_{\vert\tset\vert} $ is a locally maximal order with respect to $ \{f_i^{\mathbf{k}}\}_{\mathbf{k} \geq \mathbf{0}_{\vert\tset\vert}}  $ if and only if it is a locally maximal order with respect to $ \{\leftidx{^b}{}f_i^{\mathbf{k}}\}_{\mathbf{k} \geq \mathbf{0}_{\vert\tset\vert}} $. Furthermore, if $ \mathbf{k} \geq \mathbf{0}_{\vert\tset\vert} $ is a locally maximal order, then,
	\begin{align}
	f_i^{\mathbf{k}} = \leftidx{^b}{}f_i^{\mathbf{k}}
	\label{eq:locally_maximal_order_same_func}.
	\end{align}
	\label{lemma:locally_maximal_order_same_func}
	\hfill$ \square $
\end{lemma}
\begin{proof}
	Assume $ \mathbf{k} \geq \mathbf{0}_{\vert\tset\vert} $ is a locally maximal order with respect to $ \{f_i^{\mathbf{k}}\}_{\mathbf{k} \geq \mathbf{0}_{\vert\tset\vert}} $. This implies, from \cref{eq:basis_from_component1_inf}, that $ \leftidx{^b}{}f_i^{\overline{\mathbf{k}}} = 0_{\yset_i} $ whenever $ \mathbf{k} < \overline{\mathbf{k}} $ and $ \mathbf{k}, \overline{\mathbf{k}} $ have zeros in the same entries. Moreover, when $ \overline{\mathbf{k}} = \mathbf{k} $, \cref{eq:basis_from_component1_inf} simplifies into \cref{eq:locally_maximal_order_same_func}.\\
	The exact same reasoning applies in order to prove the converse direction using \cref{eq:component_from_basis1_inf}.
\end{proof}
The following result allows us to build any $ \hat{f}_i \in \hat{\mathcal{F}}_i^{<\infty} $ directly from the specification of a simple and decoupled family of basis components  $ \{\leftidx{^b}{}f_i^{\mathbf{k}}\}_{\mathbf{k} \geq \mathbf{0}_{\vert\tset\vert}} $.
\begin{theorem}
	Every finite order oracle component $ \hat{f}_i \in \hat{\mathcal{F}}_i^{<\infty} $, can be directly expressed in terms of its basis components $ \{\leftidx{^b}{}f_i^{\mathbf{k}}\}_{\mathbf{k} \geq \mathbf{0}_{\vert\tset\vert}} $ according to
	\begin{align}
	\hat{f}_i\left(x; \mathbf{w}_{\mathbf{s}},\mathbf{x}_{\mathbf{s}}\right) 
	=
	\sum_{
			\mathbf{m} \geq \mathbf{0}_{\vert \mathbf{s} \vert}
	}
	\frac{1}{\prod_{c\in\mathbf{s}} m_c!}
	\leftidx{^b}{}f_i^{\mathcal{K}(\mathbf{m}\mathbf{s})}
	\left(x;
	\mathbf{m},
	\mathbf{w}_{\mathbf{s}},
	\mathbf{x}_{\mathbf{s}}
	\right).
	\label{eq:oracle_from_basis1_inf}
	\end{align}
	\label{thm:oracle_basis_relation_basis_to_oracle_to}
	\hfill$ \square $
\end{theorem}
\begin{proof}
We plug in \cref{eq:component_from_basis1_inf} on \cref{eq:composition_f}, which gives us
\begin{align*}
\hat{f}_i\left(x; \mathbf{w}_{\mathbf{s}},\mathbf{x}_{\mathbf{s}}\right) 
=
\sum_{\overline{\mathbf{s}}\subseteq\mathbf{s}}
\sum_{
		\mathbf{m} \geq \mathbf{1}_{\vert \overline{\mathbf{s}} \vert}
}
\frac{1}{\prod_{c\in\overline{\mathbf{s}}} m_c!}
\leftidx{^b}{}f_i^{\mathcal{K}(\mathbf{m}\overline{\mathbf{s}})}
\left(x;
\mathbf{m},
\mathbf{w}_{\overline{\mathbf{s}}},
\mathbf{x}_{\overline{\mathbf{s}}}\right).
\end{align*}
The result comes directly from merging the two sums.
\end{proof}
Similarly to \cref{lemma:coupling_additivity}, we see that the representation on this second decomposition is also component-wise linear.
\begin{lemma}
	For two finite order oracle components $ \hat{f}_i, \hat{g}_i \in \hat{\mathcal{F}}_i^{<\infty} $ with basis components $ \{\leftidx{^b}{}f_i^{\mathbf{k}}\}_{\mathbf{k} \geq \mathbf{0}_{\vert\tset\vert}} $ and $ \{\leftidx{^b}{}g_i^{\mathbf{k}}\}_{\mathbf{k} \geq \mathbf{0}_{\vert\tset\vert}} $ respectively, the basis components of $ \hat{h}_i = \alpha\hat{f}_i  + \hat{g}_i  $ are given by $ \{\alpha \leftidx{^b}{}f_i^{\mathbf{k}} + \leftidx{^b}{}g_i^{\mathbf{k}} \}_{\mathbf{k} \geq \mathbf{0}_{\vert\tset\vert}} $ for any scalar $ \alpha $.
	\label{lemma:additivity_basis}
	\hfill$ \square $
\end{lemma}
\begin{proof}
	This comes directly from writing the basis components explicitly in terms of the coupling components as in \cref{eq:basis_from_component1_inf}, together with \cref{lemma:coupling_additivity}.
	\begin{align*}
	&\leftidx{^b}{}h_i^{\mathcal{K}(\mathbf{s})}\left(x;\mathbf{w}_{\mathbf{s}},\mathbf{x}_{\mathbf{s}}\right)
	\\
	&= 
	\sum_{\mathbf{m} \geq \mathbf{1}_{\vert \mathbf{s} \vert}}
	\frac{(-1)^{\vert\mathbf{m}\vert-\vert\mathbf{s}\vert}}{\prod_{c\in\mathbf{s}} m_c}
	\left(
	\alpha f_i^{\mathcal{K}(\mathbf{m}\mathbf{s})}
	+
	g_i^{\mathcal{K}(\mathbf{m}\mathbf{s})}
	\right)
	\left(x;
	\mathbf{m},
	\mathbf{w}_{\mathbf{s}},
	\mathbf{x}_{\mathbf{s}}
	\right)\\
	&=
	\alpha
	\left(
	\sum_{\mathbf{m} \geq \mathbf{1}_{\vert \mathbf{s} \vert}}
	\frac{(-1)^{\vert\mathbf{m}\vert-\vert\mathbf{s}\vert}}{\prod_{c\in\mathbf{s}} m_c}
	f_i^{\mathcal{K}(\mathbf{m}\mathbf{s})}
	\left(x;
	\mathbf{m},
	\mathbf{w}_{\mathbf{s}},
	\mathbf{x}_{\mathbf{s}}
	\right)
	\right)
	+
	\sum_{\mathbf{m} \geq \mathbf{1}_{\vert \mathbf{s} \vert}}
	\frac{(-1)^{\vert\mathbf{m}\vert-\vert\mathbf{s}\vert}}{\prod_{c\in\mathbf{s}} m_c}
	g_i^{\mathcal{K}(\mathbf{m}\mathbf{s})}
	\left(x;
	\mathbf{m},
	\mathbf{w}_{\mathbf{s}},
	\mathbf{x}_{\mathbf{s}}
	\right)
	\\
	&=
	\alpha
	\leftidx{^b}{}f_i^{\mathcal{K}(\mathbf{s})}\left(x;\mathbf{w}_{\mathbf{s}},\mathbf{x}_{\mathbf{s}}\right)
	+
	\leftidx{^b}{}g_i^{\mathcal{K}(\mathbf{s})}\left(x;\mathbf{w}_{\mathbf{s}},\mathbf{x}_{\mathbf{s}}\right)
	\\
	&=
	\left(
	\alpha 
	\leftidx{^b}{}f_i^{\mathcal{K}(\mathbf{s})}
	+
	\leftidx{^b}{}g_i^{\mathcal{K}(\mathbf{s})}
	\right)
	\left(x;\mathbf{w}_{\mathbf{s}},\mathbf{x}_{\mathbf{s}}\right).
	\end{align*}
\end{proof}
The following examples illustrate the proposed decomposition.
\begin{exmp}
	Consider a single-type finite order oracle component $ \hat{f}_i\in \hat{\mathcal{F}}_i^{<\infty} $ with basis components $ \{\leftidx{^b}{}f_i^{k}\}_{k \geq 0} $ such that, for some fixed $ n>0 $
	\begin{align*}
	\leftidx{^b}{}f_i^{\vert \mathbf{s} \vert}
	\left(x;
	\mathbf{w}_{\mathbf{s}},
	\mathbf{x}_{\mathbf{s}}
	\right) 
	= 
	\begin{cases}
	n! 	\prod_{c\in\mathbf{s}}
	(w_c x_c) & \vert \mathbf{s} \vert = n,\\
	f_i^0(x) & \vert \mathbf{s} \vert  = 0,\\
	0 & \text{otherwise}.
	\end{cases}
	\end{align*}
	It is clear that $ \{\leftidx{^b}{}f_i^{k}\}_{k \geq 0} $ satisfy \cref{thm:basis_f_permutation_invariance,thm:basis_f_additive} of \cref{defi:decomp_pure_basis}. Using \cref{thm:oracle_basis_relation_basis_to_oracle_to} we can find the corresponding oracle component directly. That is,
	\begin{align*}
	\hat{f}_i\left(x; \mathbf{w}_{\mathbf{s}},\mathbf{x}_{\mathbf{s}}\right) 
	&=
	\sum_{
		\mathbf{m} \geq \mathbf{0}_{\vert \mathbf{s} \vert}
	}
	\frac{1}{\prod_{c\in\mathbf{s}} m_c!}
	\leftidx{^b}{}f_i^{\vert \mathbf{m} \vert}
	\left(x;
	\mathbf{m},
	\mathbf{w}_{\mathbf{s}},
	\mathbf{x}_{\mathbf{s}}
	\right)\\
	&=
	f_i^0(x)
	+
	n!
	\sum_{
		\substack{
			\mathbf{m} \geq \mathbf{0}_{\vert \mathbf{s} \vert}\\
			\vert \mathbf{m} \vert = n
		}
	}
	\prod_{c\in\mathbf{s}}
	\frac{(w_c x_c)^{m_c}}{m_c!}
	\\
	&=
	f_i^0(x)
	+
	\left(
	\sum_{c\in\mathbf{s}} w_c x_c
	\right)^{n},
	\end{align*}
	which is exactly the same oracle component as in \cref{lemma:power_n}.\\
	As a sanity check we can easily verify from \cref{eq:component_from_basis1_inf} that the coupling components $ \{f_i^{k}\}_{k \geq 0} $ match the previously calculated ones. In particular, for $ \vert \mathbf{s} \vert > 0$, 
	\begin{align*}
	f_i^{\vert \mathbf{s} \vert}
	\left(x;
	\mathbf{w}_{\mathbf{s}},
	\mathbf{x}_{\mathbf{s}}
	\right)
	= 
	\sum_{
		\mathbf{m} \geq \mathbf{1}_{\vert \mathbf{s} \vert}
	}
	\frac{1}{\prod_{c\in\mathbf{s}} m_c!}
	\leftidx{^b}{}f_i^{\vert \mathbf{m} \vert}
	\left(x;
	\mathbf{m},
	\mathbf{w}_{\mathbf{s}},
	\mathbf{x}_{\mathbf{s}}
	\right)
	=
	n!
	\sum_{
		\substack{
			\mathbf{m} \geq \mathbf{1}_{\vert \mathbf{s} \vert}\\
			\vert \mathbf{m} \vert = n
		}
	}
	\prod_{c\in\mathbf{s}}
	\frac{(w_c x_c)^{m_c}}{m_c!}.
	\end{align*}
	Finally, note that \cref{lemma:locally_maximal_order_same_func} is verified for $ \vert \mathbf{s} \vert = n $. That is, $ f_i^{n} = \leftidx{^b}{}f_i^{n} $.
	\label{lemma:power_n_basis}
	\hfill$ \square $
\end{exmp}
We now extend \cref{lemma:power_n_basis} to the polynomial case.
\begin{exmp}
	Consider a single-type finite order oracle component $ \hat{f}_i\in \hat{\mathcal{F}}_i^{<\infty} $ with basis components $ \{\leftidx{^b}{}f_i^{k}\}_{k \geq 0} $ such that, for some fixed $ N>0 $
	\begin{align*}
	\leftidx{^b}{}f_i^{\vert \mathbf{s} \vert}
	\left(x;
	\mathbf{w}_{\mathbf{s}},
	\mathbf{x}_{\mathbf{s}}
	\right) 
	&=
	\begin{cases}
	a_{\vert \mathbf{s} \vert }\vert \mathbf{s} \vert! 	\prod_{c\in\mathbf{s}}
	(w_c x_c) & 0< \vert \mathbf{s} \vert \leq N,\\
	f_i^0(x) & \vert \mathbf{s} \vert = 0,\\
	0 & \text{otherwise}.
	\end{cases}
	\end{align*}
	It is clear that $ \{\leftidx{^b}{}f_i^{k}\}_{k \geq 0} $ satisfy \cref{thm:basis_f_permutation_invariance,thm:basis_f_additive} of \cref{defi:decomp_pure_basis}.\\ From \cref{lemma:power_n_basis,lemma:additivity_basis}, we conclude that the corresponding oracle component is given by
	\begin{align*}
	\hat{f}_i
	\left(x; \mathbf{w}_{\mathbf{s}},\mathbf{x}_{\mathbf{s}}
	\right) 
	=
	f_i^0(x)
	+
	\sum_{n=1}^{N}
	a_{n}
	\left(
	\sum_{c\in\mathbf{s}} w_c x_c
	\right)^{n},
	\end{align*}
	which is exactly the same oracle component as in \cref{cor:polynomial}. Note that we could also obtain this directly through \cref{thm:oracle_basis_relation_basis_to_oracle_to}.
	\label{cor:polynomial_basis}
	\hfill$ \square $
\end{exmp}
We now extend the previous result for multi-type networks.
\begin{exmp}
	Consider a multi-type finite order oracle component $ \hat{f}_i\in \hat{\mathcal{F}}_i^{<\infty} $ with basis components $ \{\leftidx{^b}{}f_i^{\mathbf{k}}\}_{\mathbf{k} \geq \mathbf{0}_{\vert \tset \vert}} $ such that, for $ \{a_{\mathbf{n}}\}_{\mathbf{n} > \mathbf{0}_{\vert \tset \vert}} $ with finite support,
	\begin{align*}
	\leftidx{^b}{}f_i^{\mathbf{k}}
	\left(x;
	\mathbf{w}_{\mathbf{s}},
	\mathbf{x}_{\mathbf{s}}
	\right) 
	&=
	\begin{cases}
	a_{\mathbf{k}} \prod_{j\in\tset}k_j! \prod_{c\in\mathbf{s}_j}
	(w_c x_c) & \mathbf{k} > \mathbf{0}_{\vert \tset \vert},\\
	f_i^{\mathbf{0}}(x) & 
	\mathbf{k} = \mathbf{0}_{\vert \tset \vert}.
	\end{cases}
	\end{align*}
	It is clear that $ \{\leftidx{^b}{}f_i^{\mathbf{k}}\}_{\mathbf{k} \geq \mathbf{0}_{\vert \tset \vert}} $ satisfy \cref{thm:basis_f_permutation_invariance,thm:basis_f_additive} of \cref{defi:decomp_pure_basis}. The corresponding oracle component is given by
	\begin{align*}
	\hat{f}_i\left(x; \mathbf{w}_{\mathbf{s}},\mathbf{x}_{\mathbf{s}}\right) 
	&=
	\sum_{
		\mathbf{m} \geq \mathbf{0}_{\vert \mathbf{s} \vert}
	}
	\frac{1}{\prod_{c\in\mathbf{s}} m_c!}
	\leftidx{^b}{}f_i^{\mathcal{K}(\mathbf{m}\mathbf{s})}
	\left(x;
	\mathbf{m},
	\mathbf{w}_{\mathbf{s}},
	\mathbf{x}_{\mathbf{s}}
	\right)
	\\
	&=
	f_i^{\mathbf{0}}(x)
	+
	\sum_{\mathbf{n} > \mathbf{0}_{\vert \tset \vert}}
	\sum_{
		\substack{
			\mathbf{m} \geq \mathbf{0}_{\vert \mathbf{s} \vert}
			\\
			\vert \mathbf{m}_1 \vert = n_1\\
			\ldots\\
			\vert \mathbf{m}_{\vert \tset\vert} \vert = n_{\vert \tset\vert}
		}
	}
	a_{\mathbf{n}}
	\left(
	\prod_{j \in \tset}
	n_j!
	\right)
	\prod_{c\in\mathbf{s}}
	\frac{(w_c x_c)^{m_c}}{m_c!}
	\\
	&=
	f_i^{\mathbf{0}}(x)
	+
	\sum_{\mathbf{n} > \mathbf{0}_{\vert \tset \vert}}
	a_{\mathbf{n}}
	\prod_{j\in\tset}
	\sum_{
		\substack{
			\mathbf{m}_j \geq \mathbf{0}_{\vert \mathbf{s}_j \vert}
			\\
			\vert \mathbf{m}_j \vert = n_j
		}
	}
	n_j!
	\prod_{c\in\mathbf{s}_j}
	\frac{(w_c x_c)^{m_c}}{m_c!}
	\\
	&=
	f_i^{\mathbf{0}}(x)
	+
	\sum_{\mathbf{n} > \mathbf{0}_{\vert \tset \vert}}
	a_{\mathbf{n}}
	\prod_{j\in \tset}
	\left(
	\sum_{c\in\mathbf{s}_j} w_c x_c
	\right)^{n_j},
	\end{align*}
	which is exactly the same oracle component as in \cref{lemma:power_poly_multi}.
	\hfill$ \square $
\end{exmp}
We now consider a slightly more complicated type of basis components.
\begin{exmp}
Consider a single-type finite order oracle component $ \hat{f}_i\in \hat{\mathcal{F}}_i^{<\infty} $ with basis components $ \{\leftidx{^b}{}f_i^{k}\}_{k \geq 0} $ such that, for some fixed $ n,k $ with $ n\geq k >0 $
\begin{align*}
\leftidx{^b}{}f_i^{\vert \mathbf{s} \vert}
\left(x;
\mathbf{w}_{\mathbf{s}},
\mathbf{x}_{\mathbf{s}}
\right) 
= 
\begin{cases}
(n-k)! k! (\prod_{c\in\mathbf{s}} w_c) e_k(\mathbf{x}_{\mathbf{s}}) & \vert \mathbf{s} \vert = n,\\
f_i^0(x) & \vert \mathbf{s} \vert  = 0,\\
0 & \text{otherwise}.
\end{cases}
\end{align*}
where $ e_k $ denotes what is called elementary symmetric polynomials. With the multi-index notation this can be written as
\begin{align*}
e_k(\mathbf{x}_{\mathbf{s}})
=
\sum_{
	\substack{
		\mathbf{q} \geq \mathbf{0}_{\vert \mathbf{s} \vert}\\
		\mathbf{q} \leq \mathbf{1}_{\vert \mathbf{s} \vert}\\
		\vert \mathbf{q} \vert = k
	}
}
\prod_{c\in\mathbf{s}}
x_c^{q_c}.
\end{align*}
It is clear that $ \{\leftidx{^b}{}f_i^{k}\}_{k \geq 0} $ satisfy \cref{thm:basis_f_permutation_invariance,thm:basis_f_additive} of \cref{defi:decomp_pure_basis}. We show that the oracle components $ \{f_i^{k}\}_{k \geq 0} $ can be found to be
\begin{align*}
\hat{f}_i
\left(x;
\mathbf{w}_{\mathbf{s}}
,
\mathbf{x}_{\mathbf{s}}
\right)
&=
f_i^{\mathbf{0}}(x)
+
\left(
\sum_{c\in\mathbf{s}}
w_c
\right)^{n-k}
\left(
\sum_{c\in\mathbf{s}}
w_c x_c
\right)^{k}.
\end{align*}
\label{lemma:power_n_basis_symetric}
To prove this, firstly note that
\begin{align*}
e_k(\mathbf{m}\mathbf{x}_{\mathbf{s}})
=
\sum_{
	\substack{
		\mathbf{q} \geq \mathbf{0}_{\vert \mathbf{s} \vert}\\
		\mathbf{q} \leq \mathbf{m}\\
		\vert \mathbf{q} \vert = k
	}
}
\prod_{c\in\mathbf{s}}
x_c^{q_c}
\binom{m_c}{q_c}.
\end{align*}
Using \cref{thm:oracle_basis_relation_basis_to_oracle_to},
\begin{align*}
\hat{f}_i
\left(x;
\mathbf{w}_{\mathbf{s}}
,
\mathbf{x}_{\mathbf{s}}
\right)
&= 
\sum_{
	\mathbf{m} \geq \mathbf{0}_{\vert \mathbf{s} \vert}
}
\frac{1}{\prod_{c\in\mathbf{s}} m_c!}
\leftidx{^b}{}f_i^{\vert \mathbf{m} \vert}
\left(x;
\mathbf{m},
\mathbf{w}_{\mathbf{s}},
\mathbf{x}_{\mathbf{s}}
\right)
\\
&=
f_i^{\mathbf{0}}(x)
+
\sum_{
	\substack{
		\mathbf{m} \geq \mathbf{0}_{\vert \mathbf{s} \vert}\\
		\vert \mathbf{m} \vert = n
	}
}
\frac{1}{\prod_{c\in\mathbf{s}} m_c!}
\leftidx{^b}{}f_i^{\vert \mathbf{m} \vert}
\left(x;
\mathbf{m},
\mathbf{w}_{\mathbf{s}},
\mathbf{x}_{\mathbf{s}}
\right)
\\
&=
f_i^{\mathbf{0}}(x)
+
(n-k)! k!
\sum_{
	\substack{
		\mathbf{m} \geq \mathbf{0}_{\vert \mathbf{s} \vert}\\
		\vert \mathbf{m} \vert = n
	}
}
\frac{1}{\prod_{c\in\mathbf{s}} m_c!}
\left(
\prod_{c\in\mathbf{s}} w_c^{m_c}
\right)
e_k(\mathbf{m}\mathbf{x}_{\mathbf{s}})
\\
&=
f_i^{\mathbf{0}}(x)
+
(n-k)! k!
\sum_{
	\substack{
		\mathbf{m} \geq \mathbf{0}_{\vert \mathbf{s} \vert}\\
		\vert \mathbf{m} \vert = n
	}
}
\left(
\prod_{c\in\mathbf{s}} 
\frac{w_c^{m_c}}{m_c!}
\right)
\sum_{
	\substack{
		\mathbf{q} \geq \mathbf{0}_{\vert \mathbf{s} \vert}\\
		\mathbf{q} \leq \mathbf{m}\\
		\vert \mathbf{q} \vert = k
	}
}
\prod_{c\in\mathbf{s}}
x_c^{q_c}
\binom{m_c}{q_c}
\\
&=
f_i^{\mathbf{0}}(x)
+
(n-k)! k!
\sum_{
	\substack{
		\mathbf{m} \geq \mathbf{0}_{\vert \mathbf{s} \vert}\\
		\vert \mathbf{m} \vert = n
	}
}
\sum_{
	\substack{
		\mathbf{q} \geq \mathbf{0}_{\vert \mathbf{s} \vert}\\
		\mathbf{q} \leq \mathbf{m}\\
		\vert \mathbf{q} \vert = k
	}
}
\prod_{c\in\mathbf{s}}
\frac{w_c^{m_c-q_c} (w_c x_c)^{q_c}}{(m_c-q_c)!q_c!}.
\end{align*}
Define $ p_c:=m_c-q_c $. Then, we can write this in terms of $ \mathbf{p} $ as
\begin{align*}
\hat{f}_i
\left(x;
\mathbf{w}_{\mathbf{s}}
,
\mathbf{x}_{\mathbf{s}}
\right)
&=
f_i^{\mathbf{0}}(x)
+
(n-k)! k!
\sum_{
	\substack{
		\mathbf{p} \geq \mathbf{0}_{\vert \mathbf{s} \vert}\\
		\mathbf{q} \geq \mathbf{0}_{\vert \mathbf{s} \vert}\\
		\vert \mathbf{p} \vert = n-k\\
		\vert \mathbf{q} \vert = k
	}
}
\prod_{c\in\mathbf{s}}
\frac{w_c^{p_c} (w_c x_c)^{q_c}}{p_c!q_c!}
\\
&=
f_i^{\mathbf{0}}(x)
+
\left[
(n-k)!
\sum_{
	\substack{
		\mathbf{p} \geq \mathbf{0}_{\vert \mathbf{s} \vert}\\
		\vert \mathbf{p} \vert = n-k
	}
}
\prod_{c\in\mathbf{s}}
\frac{w_c^{p_c}}{p_c!}
\right]
\left[
k!
\sum_{
	\substack{
		\mathbf{q} \geq \mathbf{0}_{\vert \mathbf{s} \vert}\\
		\vert \mathbf{q} \vert = k
	}
}
\prod_{c\in\mathbf{s}}
\frac{(w_c x_c)^{q_c}}{q_c!}
\right]
\\
&=
f_i^{\mathbf{0}}(x)
+
\left(
\sum_{c\in\mathbf{s}}
w_c
\right)^{n-k}
\left(
\sum_{c\in\mathbf{s}}
w_c x_c
\right)^{k},
\end{align*}
which completes our proof.
\hfill$ \square $
\end{exmp}	
For completeness sake, we present in \cref{thm:oracle_basis_relation_oracle_to basis} the inverse result of \cref{thm:oracle_basis_relation_basis_to_oracle_to}. That is, we express the basis components in terms of the oracle components.
In this result, we use a generalization of Stirling numbers called $ r $-Stirling numbers, which are defined in \cite{broder1984r}.
\begin{defi}
	The unsigned r-Stirling numbers of the first kind, $ \stirstg{r}{n}{k} $, with $ r,n,k \geq 0$, are given by the recurrence relation
	\begin{align*}
	\stirstg{r}{n}{k} = (n-1) \stirstg{r}{n-1}{k} + \stirstg{r}{n-1}{k-1}
	,\quad n>r, k>0,
	\end{align*}
	together with the boundary conditions
	\begin{align*}
	\stirstg{r}{r}{k} &= \delta_{r,k},\\
	\stirstg{r}{n}{k} &= 0 \quad n < r,\\
	\stirstg{r}{n}{0} &= 0 \quad n > r.
	\end{align*}
	\label{defi:stirling_1_generalized}
	\hfill$ \square $
\end{defi}
\begin{remark}
	Note that $ \stirstg{0}{n}{k} = \stirst{n}{k} $. Moreover, $ \stirstg{1}{n}{k} = \stirst{n}{k} $ when $ n>0 $.
	\hfill$ \square $
\end{remark}
We denote by $ \hat{\mathcal{F}}_i^{\leq \mathbf{K}} $ the subset of all $ \hat{f}_i \in \hat{\mathcal{F}}_i $ such that all of their non-zero coupling components are inside the subset $ \{f_i^{\mathbf{k}}\}_{\mathbf{k} \leq \mathbf{K}} $. From \cref{lemma:coupling_additivity}, this forms a subspace.
\begin{theorem}
	For every finite order $ \hat{f}_i \in \hat{\mathcal{F}}_i^{<\infty} $, we can express the set of basis components $ \{\leftidx{^b}{}f_i^{\mathbf{k}}\}_{\mathbf{k} \geq \mathbf{0}_{\vert\tset\vert}} $, which have the properties as in \cref{defi:decomp_pure_basis}, in terms of its oracle components $ \hat{f}_i $.	In particular, for any $ \mathbf{K}\geq \mathbf{0}_{\vert\tset\vert} $ such that $ \hat{f}_i \in \hat{\mathcal{F}}_i^{\leq \mathbf{K}} $, we have that
\begin{align}
\leftidx{^b}{}f_i^{\mathcal{K}(\mathbf{s})}\left(x;\mathbf{w}_{\mathbf{s}},\mathbf{x}_{\mathbf{s}}\right)
=
(-1)^{\vert\mathbf{s}\vert}
\sum_{\overline{\mathbf{s}}\subseteq\mathbf{s}}
\sum_{
	\substack{
		\mathbf{M} \geq \mathbf{1}_{\vert \overline{\mathbf{s}} \vert}\\
		\mathcal{K}(\mathbf{M}\overline{\mathbf{s}}) \leq \mathbf{K}
	}
}
\frac{(-1)^{\vert\mathbf{M}\vert}}
{\prod_{c\in\overline{\mathbf{s}}} M_c}
\left[
\prod_{j \in \tset}
C(
K_j,
\vert \mathbf{M}_j \vert,
\vert  \mathbf{s}_j\setminus\overline{\mathbf{s}}_j \vert
)
\right]
\hat{f}_i
\left(x;
\mathbf{M},
\mathbf{w}_{\overline{\mathbf{s}}},
\mathbf{x}_{\overline{\mathbf{s}}}
\right)
\label{eq:basis1_inf_from_oracle}
\end{align}
where $ C(K,M,r) $, with $ K\geq M \geq 0 $ and $ r\geq 0 $, is defined as
\begin{align}
C(K, M, r)
&:=
\frac{r!}
{(K - M)!}
\stirstg{M +1 }{K+1}{r + M + 1}.
\label{eq:oracle_basis_relation_oracle_to basis_coefs}
\end{align}
\label{thm:oracle_basis_relation_oracle_to basis}
\hfill$ \square $
\end{theorem}
In order to prove this, we require \cref{thm:sum_prod_comb_stirstg}, which is proved in \cref{apend:stirling_generalized}.
	\begin{lemma}
	For $ \mathbf{M} \geq \mathbf{1}_{k} $, with $ n\geq \vert \mathbf{M} \vert $ and $ k,r\geq0 $, we have that
	\begin{align}
	\sum_{
		\substack{
			\mathbf{m} \geq \mathbf{M} \\
			\mathbf{p} \geq \mathbf{1}_{r}\\
			\vert \mathbf{m} \vert
			+
			\vert\mathbf{p}\vert \leq n
		}
	}
	\prod_{i=1}^{k}
	\binom{m_i -1}{M_i -1}
	\prod_{j=1}^r
	\frac{1}{p_j}
	=
	\frac{r!}{(n - \vert \mathbf{M} \vert)!}
	\stirstg{\vert \mathbf{M} \vert +1 }{n+1}{r + \vert \mathbf{M} \vert + 1}.
	\end{align}
	\label{thm:sum_prod_comb_stirstg}
	\hfill$ \square $
\end{lemma}
\begin{proof}[Proof of \cref{thm:oracle_basis_relation_oracle_to basis}]
We first note that \cref{eq:decomp_explicit} can be generalized into 
\begin{align*}
f_i^{\mathcal{K}(\mathbf{m}\mathbf{s})}\left(x;
\mathbf{m},
\mathbf{w}_{\mathbf{s}},
\mathbf{x}_{\mathbf{s}}
\right)
=
\sum_{
	\substack{
	\overline{\mathbf{m}} \geq \mathbf{0}_{\vert \mathbf{s} \vert}\\
	\overline{\mathbf{m}} \leq \mathbf{m}
	}
}
\left[
\prod_{c\in\mathbf{s}}
\binom{m_c}{\overline{m}_c}
\right]
(-1)^{\vert\mathbf{m}\vert-\vert\overline{\mathbf{m}}\vert}
\hat{f}_i
\left(x;
\overline{\mathbf{m}},
\mathbf{w}_{\mathbf{s}},
\mathbf{x}_{\mathbf{s}}
\right).
\end{align*}
Plugging in this result into \cref{eq:basis_from_component1_inf}, we obtain
\begin{align*}
\leftidx{^b}{}f_i^{\mathcal{K}(\mathbf{s})}\left(x;\mathbf{w}_{\mathbf{s}},\mathbf{x}_{\mathbf{s}}\right)
&= 
\sum_{
	\substack{
		\mathbf{m} \geq \mathbf{1}_{\vert \mathbf{s} \vert}\\
		\mathcal{K}(\mathbf{m}\mathbf{s}) \leq \mathbf{K}
	}
}
\frac{(-1)^{\vert\mathbf{m}\vert-\vert\mathbf{s}\vert}}{\prod_{c\in\mathbf{s}} m_c}
\sum_{
	\substack{
		\overline{\mathbf{m}} \geq \mathbf{0}_{\vert \mathbf{s} \vert}\\
		\overline{\mathbf{m}} \leq \mathbf{m}
	}
}
\left[
\prod_{c\in\mathbf{s}}
\binom{m_c}{\overline{m}_c}
\right]
(-1)^{\vert\mathbf{m}\vert-\vert\overline{\mathbf{m}}\vert}
\hat{f}_i
\left(x;
\overline{\mathbf{m}},
\mathbf{w}_{\mathbf{s}},
\mathbf{x}_{\mathbf{s}}
\right)
\\
&=
(-1)^{\vert\mathbf{s}\vert}
\sum_{
	\substack{
		\mathbf{m} \geq \mathbf{1}_{\vert \mathbf{s} \vert}\\
		\mathcal{K}(\mathbf{m}\mathbf{s}) \leq \mathbf{K}
	}
}
\sum_{
	\substack{
		\overline{\mathbf{m}} \geq \mathbf{0}_{\vert \mathbf{s} \vert}\\
		\overline{\mathbf{m}} \leq \mathbf{m}
	}
}
\left[
\prod_{c\in\mathbf{s}}
\frac{
\binom{m_c}{\overline{m}_c}}
{m_c}
\right]
(-1)^{\vert\overline{\mathbf{m}}\vert}
\hat{f}_i
\left(x;
\overline{\mathbf{m}},
\mathbf{w}_{\mathbf{s}},
\mathbf{x}_{\mathbf{s}}
\right).
\end{align*}
Rearranging the two sums we get
\begin{align*}
\leftidx{^b}{}f_i^{\mathcal{K}(\mathbf{s})}\left(x;\mathbf{w}_{\mathbf{s}},\mathbf{x}_{\mathbf{s}}\right)
&=
(-1)^{\vert\mathbf{s}\vert}
\sum_{
	\substack{
		\overline{\mathbf{m}} \geq \mathbf{0}_{\vert \mathbf{s} \vert}\\
		\mathcal{K}(\overline{\mathbf{m}}\mathbf{s}) \leq \mathbf{K}
	}
}
(-1)^{\vert\overline{\mathbf{m}}\vert}
\left[
\sum_{
	\substack{
		\mathbf{m} \geq \overline{\mathbf{m}}\\
		\mathbf{m} \geq \mathbf{1}_{\vert \mathbf{s} \vert}\\
		\mathcal{K}(\mathbf{m}\mathbf{s}) \leq \mathbf{K}
	}
}
\prod_{c\in\mathbf{s}}
\frac{
	\binom{m_c}{\overline{m}_c}}
{m_c}
\right]
\hat{f}_i
\left(x;
\overline{\mathbf{m}},
\mathbf{w}_{\mathbf{s}},
\mathbf{x}_{\mathbf{s}}
\right).
\end{align*}
We now break the first sum into
\begin{align*}
\sum_{
	\substack{
		\overline{\mathbf{m}} \geq \mathbf{0}_{\vert \mathbf{s} \vert}\\
		\mathcal{K}(\overline{\mathbf{m}}\mathbf{s}) \leq \mathbf{K}
	}
}
=
\sum_{\overline{\mathbf{s}}\subseteq\mathbf{s}}
\sum_{
	\substack{
		\mathbf{M} \geq \mathbf{1}_{\vert \overline{\mathbf{s}} \vert}\\
		\mathcal{K}(\mathbf{M}\overline{\mathbf{s}}) \leq \mathbf{K}
	}
},
\end{align*}
that is, we correspond a given $ \overline{\mathbf{m}} \geq \mathbf{0}_{\vert \mathbf{s}\vert} $ to the subset $ \overline{\mathbf{s}}\subseteq\mathbf{s} $, of its non-zero entries. Therefore, we have that $ \overline{\mathbf{m}}_{\overline{\mathbf{s}}} = \mathbf{M} $ and $ \overline{\mathbf{m}}_{\mathbf{s}\setminus\overline{\mathbf{s}}} = \mathbf{0}_{\vert\mathbf{s}\setminus\overline{\mathbf{s}}\vert}  $. Note that according to this split, the condition $ \mathbf{m} \geq \overline{\mathbf{m}} $ becomes $ \mathbf{m}_{\overline{\mathbf{s}}} \geq \mathbf{M} $ and $ \mathbf{m}_{\mathbf{s}\setminus\overline{\mathbf{s}}} \geq \mathbf{0}_{\vert\mathbf{s}\setminus\overline{\mathbf{s}}\vert} $. On the other hand, the condition $ \mathbf{m} \geq \mathbf{1}_{\vert \mathbf{s}\vert} $ becomes $ \mathbf{m}_{\overline{\mathbf{s}}} \geq \mathbf{1}_{\vert \overline{\mathbf{s}}\vert} $ and $ \mathbf{m}_{\mathbf{s}\setminus\overline{\mathbf{s}}} \geq \mathbf{1}_{\vert \mathbf{s}\setminus\overline{\mathbf{s}}\vert}  $. Out of the resulting four conditions, the non-redundant ones are clearly $ \mathbf{m}_{\overline{\mathbf{s}}} \geq \mathbf{M} $ and $  \mathbf{m}_{\mathbf{s}\setminus\overline{\mathbf{s}}} \geq \mathbf{1}_{\vert \mathbf{s}\setminus\overline{\mathbf{s}}\vert} $. This results in
\begin{align*}
&\leftidx{^b}{}f_i^{\mathcal{K}(\mathbf{s})}\left(x;\mathbf{w}_{\mathbf{s}},\mathbf{x}_{\mathbf{s}}\right)
\\
&=
(-1)^{\vert\mathbf{s}\vert}
\sum_{\overline{\mathbf{s}}\subseteq\mathbf{s}}
\sum_{
	\substack{
		\mathbf{M} \geq \mathbf{1}_{\vert \overline{\mathbf{s}} \vert}\\
		\mathcal{K}(\mathbf{M}\overline{\mathbf{s}}) \leq \mathbf{K}
	}
}
(-1)^{\vert\mathbf{M}\vert}
\left[
\sum_{
	\substack{
		\mathbf{m}_{\overline{\mathbf{s}}} \geq \mathbf{M} \\
		 \mathbf{m}_{\mathbf{s}\setminus\overline{\mathbf{s}}} \geq \mathbf{1}_{\vert \mathbf{s}\setminus\overline{\mathbf{s}}\vert}\\
		\mathcal{K}(\mathbf{m}\mathbf{s}) \leq \mathbf{K}
	}
}
\prod_{c\in\overline{\mathbf{s}}}
\frac{\binom{m_c}{M_c}}{m_c}
\prod_{d\in\mathbf{s}\setminus\overline{\mathbf{s}}}
\frac{1}{m_d}
\right]
\hat{f}_i
\left(x;
\mathbf{M},
\mathbf{w}_{\overline{\mathbf{s}}},
\mathbf{x}_{\overline{\mathbf{s}}}
\right)
\\
&=
(-1)^{\vert\mathbf{s}\vert}
\sum_{\overline{\mathbf{s}}\subseteq\mathbf{s}}
\sum_{
	\substack{
		\mathbf{M} \geq \mathbf{1}_{\vert \overline{\mathbf{s}} \vert}\\
		\mathcal{K}(\mathbf{M}\overline{\mathbf{s}}) \leq \mathbf{K}
	}
}
\frac{(-1)^{\vert\mathbf{M}\vert}}
{\prod_{c\in\overline{\mathbf{s}}} M_c}
\left[
\sum_{
	\substack{
		\mathbf{m}_{\overline{\mathbf{s}}} \geq \mathbf{M} \\
		\mathbf{m}_{\mathbf{s}\setminus\overline{\mathbf{s}}} \geq \mathbf{1}_{\vert \mathbf{s}\setminus\overline{\mathbf{s}}\vert}\\
		\mathcal{K}(\mathbf{m}\mathbf{s}) \leq \mathbf{K}
	}
}
\prod_{c\in\overline{\mathbf{s}}}
\binom{m_c -1}{M_c -1}
\prod_{d\in\mathbf{s}\setminus\overline{\mathbf{s}}}
\frac{1}{m_d}
\right]
\hat{f}_i
\left(x;
\mathbf{M},
\mathbf{w}_{\overline{\mathbf{s}}},
\mathbf{x}_{\overline{\mathbf{s}}}
\right).
\end{align*}
Note that $ \frac{\binom{m_c}{M_c}}{m_c} = \frac{\binom{m_c -1 }{M_c -1}}{M_c} $, whenever $ M_c \geq 1 $.\\
We now show that the expression in brackets gives us $ \prod_{j \in \tset}
C(
K_j,
\vert \mathbf{M}_j \vert,
\vert  \mathbf{s}_j\setminus\overline{\mathbf{s}}_j \vert
) $, as given by \cref{eq:oracle_basis_relation_oracle_to basis_coefs}.
We rearrange it by breaking all the multi-indices according to the typing of their cells. That is,
\begin{align*}
\prod_{j \in \tset}
\sum_{
	\substack{
		\mathbf{m}_{\overline{\mathbf{s}}_j} \geq \mathbf{M}_j \\
		\mathbf{m}_{\mathbf{s}_j\setminus\overline{\mathbf{s}}_j} \geq \mathbf{1}_{\vert \mathbf{s}_j\setminus\overline{\mathbf{s}}_j\vert}\\
		\vert \mathbf{m}_{\overline{\mathbf{s}}_j} \vert
		+
		\vert\mathbf{m}_{\mathbf{s}_j\setminus\overline{\mathbf{s}}_j}\vert \leq K_j
	}
}
\prod_{c\in\overline{\mathbf{s}}_j}
\binom{m_c -1}{M_c -1}
\prod_{d\in\mathbf{s}_j\setminus\overline{\mathbf{s}}_j}
\frac{1}{m_d}.
\end{align*}
from \cref{thm:sum_prod_comb_stirstg}, the result is proven.
\end{proof}
\begin{remark}
	Note that if $ \hat{f}_i \in \hat{\mathcal{F}}_i^{\leq \mathbf{K}_1} \cap \hat{\mathcal{F}}_i^{\leq \mathbf{K}_2} $, with $ \mathbf{K}_1 \neq \mathbf{K}_2 $, the associated coefficients $C(
	K_j,
	\vert \mathbf{M}_j \vert,
	\vert  \mathbf{s}_j\setminus\overline{\mathbf{s}}_j \vert
	) $
	will be different when we apply \cref{eq:basis1_inf_from_oracle} to $ \mathbf{K}_1 $ and $\mathbf{K}_2 $.\\
	The fact that there are multiple valid formulas that express $ \{\leftidx{^b}{}f_i^{\mathbf{k}}\}_{\mathbf{k} \geq \mathbf{0}_{\vert\tset\vert}} $ as a function of $ \hat{f}_i $ might seem unexpected. One way to convince ourselves that this is reasonable, is to consider a simple case like $ \hat{f}_i \in \hat{\mathcal{F}}_i^{\leq \mathbf{0}}$. In this case, we have that $  
	\leftidx{^b}{}f_i^{\mathbf{0}}
	\left(x	\right)
	=
	\hat{f}_i
	\left(x;
	\mathbf{w}_{\mathbf{s}},
	\mathbf{x}_{\mathbf{s}}
	\right)
	$ and it is easy to get creative and find multiple formulas that are all valid under the (very strict) assumption that $ \hat{f}_i \in \hat{\mathcal{F}}_i^{\leq \mathbf{0}}$.
	\hfill$ \square $
\end{remark}
\begin{remark}
	Note that the coefficients $C(
	K_j,
	\vert \mathbf{M}_j \vert,
	\vert  \mathbf{s}_j\setminus\overline{\mathbf{s}}_j \vert
	) $ diverge as $ \mathbf{K} \to \infty $. In particular, $ C(K_j, 1, 0) = \stirstg{2}{K_j+1}{2}/(K_j-1)! = K_j!/(K_j-1)! = K_j$ for $ K_j \geq 1 $. Therefore, the limit case of \cref{thm:oracle_basis_relation_oracle_to basis} does not give us a universal formula that works for all $ \hat{f}_i\in \hat{\mathcal{F}}_i^{<\infty} $.
	\hfill$ \square $
\end{remark}
\subsection{Infinite coupling order}
All the results of \cref{subsec:finite_order} fall under the assumption that the oracle functions are in $ \hat{\mathcal{F}}_i^{<\infty} $. That is, they have finite order. There are, however, plentiful useful functions that lie outside this subspace, such as the exponential function in \cref{cor:exponential}. Our goal is to create a useful extension of this theory that applies to at least some important functions, such as the exponential and the trigonometric functions.
The first idea that comes to mind is to simply allow the family of basis components $ \{\leftidx{^b}{}f_i^{\mathbf{k}}\}_{\mathbf{k} \geq \mathbf{0}_{\vert\tset\vert}} $ to have infinite support. There is an important issue with such an approach. When dealing with infinite sums we are actually talking about limits on a sequence of partial sums. For this to be well-defined we need to be clear about the meaning of infinite sums of the type $ \sum_{\mathbf{m} \geq \mathbf{1}_{\vert \mathbf{s} \vert}} a_{\mathbf{m}}$. There are plenty of possible definitions, with some of the more obvious ones being 
\begin{align*}
\lim\limits_{N\to\infty} 
\sum_{n=0}^{N}
\sum_{
	\substack{
		\mathbf{m} \geq \mathbf{1}_{\vert \mathbf{s} \vert}\\
		\vert \mathbf{m} \vert = n
	}
}
a_{\mathbf{m}}
,\quad \text{or} \quad
\lim\limits_{N\to\infty} 
\sum_{n=0}^{N}
\sum_{
	\substack{
		\mathbf{m} \geq \mathbf{1}_{\vert \mathbf{s} \vert}\\
		\max(\mathbf{m})  = n
	}
}
a_{\mathbf{m}}.
\end{align*}
However, there is no clear reason for why one definition would be preferable to the other. If we chose one of them and developed our theory based on that, we would only be restricting ourselves to that choice. A different (and better) approach is to simply choose to give up on a $ \{\leftidx{^b}{}f_i^{\mathbf{k}}\}_{\mathbf{k} \geq \mathbf{0}_{\vert\tset\vert}} $ representation for oracle components outside $ \hat{\mathcal{F}}_i^{<\infty} $ and use the following result instead.
\begin{lemma}
	Consider the related set $ \yset_i $ to be a Hausdorff vector space. Then, for every sequence $ (\leftidx{^N}{}\hat{f}_i)_{N\in\mathbb{N}} $, with $ \leftidx{^N}{}\hat{f}_i \in \hat{\mathcal{F}}_i^{<\infty} $ for all $ N\in\mathbb{N} $ such that $ \hat{f}_i := \lim\limits_{N\to\infty} 
	\leftidx{^N}{}\hat{f}_i$ converges pointwise, we have that $ \hat{f}_i \in \hat{\mathcal{F}}_i$.
	\label{cor:build_oracle_from_basis_sequence}
	\hfill$ \square $
\end{lemma}
\begin{proof}
	This is direct from the fact that $ \hat{\mathcal{F}}_i $ is sequentially closed in the topology of pointwise convergence (\cref{lemma:F_closed}) and $ \hat{\mathcal{F}}_i^{<\infty} \subset \hat{\mathcal{F}}_i $. We assume $ \yset_i $ is a vector space so that $ \hat{\mathcal{F}}_i^{<\infty} $ can be defined.
\end{proof}
This provides us with a framework that allows us to build a set of oracle components with infinite order, in particular the ones in $ scl({\hat{\mathcal{F}}_i^{<\infty}}) $, the sequential closure of $ \hat{\mathcal{F}}_i^{<\infty} $. We illustrate this with the following example.
\begin{exmp}
	Consider the sequence of oracle components with finite coupling order \\$ (\leftidx{^N}{}\hat{f}_i)_{N\in\mathbb{N}} $ such that the basis components of $ \leftidx{^N}{}\hat{f}_i \in \hat{\mathcal{F}}_i^{<\infty} $ are given according to
	\begin{align*}
	\leftidx{^{bN}}{}f_i^{\vert \mathbf{s} \vert}
	\left(x;
	\mathbf{w}_{\mathbf{s}},
	\mathbf{x}_{\mathbf{s}}
	\right) 
	&=
	\begin{cases}
	a_{\vert \mathbf{s} \vert }\vert \mathbf{s} \vert! 
	\prod_{c\in\mathbf{s}}
	(w_c x_c) & 0< \vert \mathbf{s} \vert \leq N,\\
	f_i^0(x) & \vert \mathbf{s} \vert = 0,\\
	0 & \text{otherwise}.
	\end{cases}
	\end{align*}
	From \cref{cor:polynomial_basis}, we know that this corresponds to the oracle component
	\begin{align*}
	\leftidx{^N}{}\hat{f}_i
	\left(x; \mathbf{w}_{\mathbf{s}},\mathbf{x}_{\mathbf{s}}
	\right) 
	=
	f_i^0(x)
	+
	\sum_{n=1}^{N}
	a_{n}
	\left(
	\sum_{c\in\mathbf{s}} w_c x_c
	\right)^{n}.
	\end{align*}
	Then, if the infinite series given by $ F(x) = 	\sum_{n=1}^{\infty}
	a_{n}
	x^{n} $	converges for all $ x $, we know from \cref{cor:build_oracle_from_basis_sequence} that $ \hat{f}_i := \lim\limits_{N\to\infty} 
	\leftidx{^N}{}\hat{f}_i $ is given by
	\begin{align*}
	\hat{f}_i
	\left(x; \mathbf{w}_{\mathbf{s}},\mathbf{x}_{\mathbf{s}}
	\right) 
	=
	f_i^0(x)
	+
	F
	\left(
	\sum_{c\in\mathbf{s}} w_c x_c
	\right)
	\end{align*}
	and it is a valid oracle component.
	\label{exmp:taylor_infinite_case}
	\hfill$ \square $
\end{exmp}
\begin{remark}
	Note that this example covers functions such as $ F(x) = \exp(x)-1 $, $ F(x) = \sin(x) $ and $ F(x) = \cos(x)-1 $.
	\hfill$ \square $
\end{remark}
\begin{lemma}
	The set $scl( \hat{\mathcal{F}}_i^{<\infty}) $ is a vector space.
	\hfill$ \square $
\end{lemma}
\begin{proof}
	Consider $ \hat{f}_i,\hat{g}_i \in scl( \hat{\mathcal{F}}_i^{<\infty})$. From assumption, there are sequences $ (\leftidx{^N}{}\hat{f}_i)_{N\in\mathbb{N}} $, $ (\leftidx{^N}{}\hat{g}_i)_{N\in\mathbb{N}} $ with $ \leftidx{^N}{}\hat{f}_i,\leftidx{^N}{}\hat{g}_i \in \hat{\mathcal{F}}_i^{<\infty} $ such that $ \lim\limits_{N\to\infty} 
	\leftidx{^N}{}\hat{f}_i = \hat{f}_i $ and $ \lim\limits_{N\to\infty} 
	\leftidx{^N}{}\hat{g}_i = \hat{g}_i $. Then, the elements of the sequence $ (\alpha \leftidx{^N}{}\hat{f}_i + \leftidx{^N}{}\hat{g}_i)_{N\in\mathbb{N}} $ are also in $ \hat{\mathcal{F}}_i^{<\infty} $ and the sequence converges into $ \alpha \hat{f}_i + \hat{g}_i $. Therefore $ \alpha \hat{f}_i + \hat{g}_i \in scl(\hat{\mathcal{F}}_i^{<\infty}) $ and $ scl(\hat{\mathcal{F}}_i^{<\infty})$ is a vector space.
\end{proof}
\begin{remark}
	Note that $ \hat{\mathcal{F}}_i^{<\infty} \subseteq scl(\hat{\mathcal{F}}_i^{<\infty}) \subseteq \hat{\mathcal{F}}_i $.
	\hfill$ \square $
\end{remark}
\section{Conclusion}
This paper makes CCN theory more useful for practical application thanks to the two decompositions here derived. These decompositions allow us to verify and model dynamical systems (or some other first-order property of a network, such as a measurement function) in a systematic way. Multiple examples illustrating their use are provided.

\bibliographystyle{siamplain} 
\bibliography{references}

\end{document}


\maketitle
	
	This supplement presents intermediate results that are required for some proofs in the main text.
	In particular, in \cref{apend:decomposition_finite_intermediate_results} we derive \cref{lemma:stirling_sums_multi_2,lemma:stirling_sums_alternated_multi_2,lemma:stirling_sums_multi_1,lemma:stirling_sums_alternated_multi_1,lemma:basis_comp_multi_expansion,lemma:coupl_comp_multi_expansion,lemma:comb_sum_coupling_to_basis_major}, which are used to prove \cref{thm:decomposition_finite}. \Cref{apend:stirling_generalized} derives \cref{thm:sum_prod_comb_stirstg} which is used to prove \cref{thm:oracle_basis_relation_oracle_to basis}.
	\appendix
	\section{Intermediate results used in \cref{thm:decomposition_finite}}
	\label{apend:decomposition_finite_intermediate_results}
	\begin{lemma}
		For every $ n\in\mathbb{Z} $, $ k\in\mathbb{N} $, we have that
		\begin{align}
		\sum_{
			\substack{
				\mathbf{m} \geq \mathbf{1}_{k}\\
				\vert \mathbf{m} \vert = n
			}
		}
		\frac{n}{\prod_{i=1}^{k}m_i}
		=
		\sum_{
			\substack{
				\mathbf{m} \geq \mathbf{1}_{k}\\
				\vert \mathbf{m} \vert = n-1
			}
		}
		\frac{n-1}{\prod_{i=1}^{k}m_i}
		+
		\sum_{
			\substack{
				\mathbf{m} \geq \mathbf{1}_{k-1}\\
				\vert \mathbf{m} \vert = n-1
			}
		}
		\frac{k}{\prod_{i=1}^{k-1}m_i}.
		\label{eq:s1_my_express_recursive}
		\end{align}
		\label{lemma:s1_my_express_recursive}
		\hfill$ \square $
	\end{lemma}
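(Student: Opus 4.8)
The plan is to introduce the auxiliary quantity
$A(n,k) := \sum_{\mathbf{m}\geq\mathbf{1}_{k},\ \vert\mathbf{m}\vert=n}\big(\prod_{i=1}^{k}m_{i}\big)^{-1}$,
so that \cref{eq:s1_my_express_recursive} is exactly the statement
$n\,A(n,k)=(n-1)\,A(n-1,k)+k\,A(n-1,k-1)$,
and then to deduce both terms on the right from a single ``summation'' recursion for $A$.

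The first step is to rewrite the constant numerator $n$ appearing on the left using $n=\sum_{i=1}^{k}m_{i}$ inside the sum, which gives
$n\,A(n,k)=\sum_{i=1}^{k}\sum_{\mathbf{m}}m_{i}\big/\prod_{j=1}^{k}m_{j}=\sum_{i=1}^{k}\sum_{\mathbf{m}}1\big/\prod_{j\neq i}m_{j}$.
Since the index set $\{\mathbf{m}\geq\mathbf{1}_{k}:\vert\mathbf{m}\vert=n\}$ is invariant under permutations of the coordinates, each of the $k$ inner sums equals the one with $i=k$, so $n\,A(n,k)=k\sum_{\mathbf{m}\geq\mathbf{1}_{k},\,\vert\mathbf{m}\vert=n}1\big/\prod_{j=1}^{k-1}m_{j}$. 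In this remaining sum the coordinate $m_{k}$ no longer appears in the summand; summing first over $(m_{1},\dots,m_{k-1})$ with fixed partial sum $s=m_{1}+\dots+m_{k-1}$ — for which there is exactly one admissible $m_{k}=n-s$, valid precisely when $1\le n-s$, i.e.\ $s\le n-1$ — yields the key identity
\begin{equation*}
 n\,A(n,k)=k\sum_{s=k-1}^{\,n-1}A(s,k-1).
\end{equation*}

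Applying this identity with $n$ replaced by $n-1$ gives $(n-1)\,A(n-1,k)=k\sum_{s=k-1}^{n-2}A(s,k-1)$; subtracting the two sums telescopes and leaves exactly $k\,A(n-1,k-1)$, which is \cref{eq:s1_my_express_recursive}. I do not expect a genuine obstacle: the single substantive idea is the substitution $n=\sum_{i}m_{i}$ followed by the symmetry reduction, and everything else is routine. The only points requiring care are the bookkeeping of the summation ranges and the degenerate cases ($n\le k$, or $k=1$, where some index set is empty or reduces to the zero-length vector with sum $0$); in each such case one checks directly that both sides collapse to $0$ or to $k$ simultaneously, so the identity persists for all $n\in\mathbb{Z}$, $k\in\mathbb{N}$.
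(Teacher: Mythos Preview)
Your proof is correct and follows essentially the same route as the paper: both replace the numerator $n$ by $\sum_i m_i$, cancel one factor $m_i$ from the product, sum out the freed coordinate to obtain the cumulative identity $n\,A(n,k)=k\sum_{s\le n-1}A(s,k-1)$, and then subtract the same identity at $n-1$ to telescope down to \cref{eq:s1_my_express_recursive}. The only cosmetic difference is that you invoke the permutation symmetry explicitly to reduce to the $i=k$ term, whereas the paper keeps the sum over $j$ and performs the coordinate-removal change of variables for each $j$ separately before observing that the $k$ contributions coincide.
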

	\begin{proof}
		Using the fact that $ \vert \mathbf{m} \vert = n $, we can rewrite the left hand side into
		\begin{align*}
		\sum_{
			\substack{
				\mathbf{m} \geq \mathbf{1}_{k}\\
				\vert \mathbf{m} \vert = n
			}
		}
		\frac{n}{\prod_{i=1}^{k}m_i}
		=
		\sum_{
			\substack{
				\mathbf{m} \geq \mathbf{1}_{k}\\
				\vert \mathbf{m} \vert = n
			}
		}
		\sum_{j = 1}^{k}
		\frac{m_j}{\prod_{i=1}^{k}m_i}
		=
		\sum_{j = 1}^{k}
		\sum_{
			\substack{
				\mathbf{m} \geq \mathbf{1}_{k}\\
				\vert \mathbf{m} \vert = n
			}
		}
		\frac{1}{\prod_{
				\substack{
					i=1\\
					i\neq j}
			}^{k}m_i}.
		\end{align*}
		Note that $ m_j \geq 1 $, therefore, the quotients are always well-defined. We perform a change of variables by removing the entry $ j $ of the multi-index $ \mathbf{m} $. The other conditions in the sum have to be adjusted accordingly, in particular, $ \vert \mathbf{m} \vert = n $ becomes $ \vert \mathbf{m} \vert \leq n-1 $, that is,
		\begin{align*}
		\sum_{j = 1}^{k}
		\sum_{
			\substack{
				\mathbf{m} \geq \mathbf{1}_{k}\\
				\vert \mathbf{m} \vert = n
			}
		}
		\frac{1}{\prod_{
				\substack{
					i=1\\
					i\neq j}
			}^{k}m_i
		}
		=
		\sum_{j = 1}^{k}
		\sum_{
			\substack{
				\mathbf{m} \geq \mathbf{1}_{k-1}\\
				\vert \mathbf{m} \vert \leq n-1
			}
		}
		\frac{1}{\prod_{i=1}^{k-1}m_i
		}
		=
		\sum_{
			\substack{
				\mathbf{m} \geq \mathbf{1}_{k-1}\\
				\vert \mathbf{m} \vert \leq n-1
			}
		}
		\frac{k}{\prod_{i=1}^{k-1}m_i
		}.
		\end{align*}
		In summary, we have proven that
		\begin{align*}
		\sum_{
			\substack{
				\mathbf{m} \geq \mathbf{1}_{k}\\
				\vert \mathbf{m} \vert = n
			}
		}
		\frac{n}{\prod_{i=1}^{k}m_i}
		=
		\sum_{
			\substack{
				\mathbf{m} \geq \mathbf{1}_{k-1}\\
				\vert \mathbf{m} \vert \leq n-1
			}
		}
		\frac{k}{\prod_{i=1}^{k-1}m_i
		}.
		\end{align*}
		This expression is valid for every $ n\in\mathbb{Z} $, therefore, changing variable $ n $ into $ n-1 $, one obtains
		\begin{align*}
		\sum_{
			\substack{
				\mathbf{m} \geq \mathbf{1}_{k}\\
				\vert \mathbf{m} \vert = n-1
			}
		}
		\frac{n-1}{\prod_{i=1}^{k}m_i}
		=
		\sum_{
			\substack{
				\mathbf{m} \geq \mathbf{1}_{k-1}\\
				\vert \mathbf{m} \vert \leq n-2
			}
		}
		\frac{k}{\prod_{i=1}^{k-1}m_i
		}.
		\end{align*}
		These two equations can be merged in the following way
		\begin{align*}
		\sum_{
			\substack{
				\mathbf{m} \geq \mathbf{1}_{k}\\
				\vert \mathbf{m} \vert = n
			}
		}
		\frac{n}{\prod_{i=1}^{k}m_i}
		&=
		\sum_{
			\substack{
				\mathbf{m} \geq \mathbf{1}_{k-1}\\
				\vert \mathbf{m} \vert \leq n-1
			}
		}
		\frac{k}{\prod_{i=1}^{k-1}m_i
		}
		\\
		&=
		\sum_{
			\substack{
				\mathbf{m} \geq \mathbf{1}_{k-1}\\
				\vert \mathbf{m} \vert \leq n-2
			}
		}
		\frac{k}{\prod_{i=1}^{k-1}m_i
		}
		+
		\sum_{
			\substack{
				\mathbf{m} \geq \mathbf{1}_{k-1}\\
				\vert \mathbf{m} \vert = n-1
			}
		}
		\frac{k}{\prod_{i=1}^{k-1}m_i
		}
		\\
		&=
		\sum_{
			\substack{
				\mathbf{m} \geq \mathbf{1}_{k}\\
				\vert \mathbf{m} \vert = n-1
			}
		}
		\frac{n-1}{\prod_{i=1}^{k}m_i}
		+
		\sum_{
			\substack{
				\mathbf{m} \geq \mathbf{1}_{k-1}\\
				\vert \mathbf{m} \vert = n-1
			}
		}
		\frac{k}{\prod_{i=1}^{k-1}m_i
		},
		\end{align*}
		which concludes the proof.
	\end{proof}
	\begin{lemma}
		For every $ n\in\mathbb{Z} $, $ k\in\mathbb{N} $, we have that
		\begin{align}
		\sum_{
			\substack{
				\mathbf{m} \geq \mathbf{1}_{k}\\
				\vert \mathbf{m} \vert = n
			}
		}
		\frac{n}{\prod_{i=1}^{k}m_i!}
		=
		\sum_{
			\substack{
				\mathbf{m} \geq \mathbf{1}_{k}\\
				\vert \mathbf{m} \vert = n-1
			}
		}
		\frac{k}{\prod_{i=1}^{k}m_i!}
		+
		\sum_{
			\substack{
				\mathbf{m} \geq \mathbf{1}_{k-1}\\
				\vert \mathbf{m} \vert = n-1
			}
		}
		\frac{k}{\prod_{i=1}^{k-1}m_i!}.
		\label{eq:s2_my_express_recursive}
		\end{align}
		\label{lemma:s2_my_express_recursive}
		\hfill$ \square $
	\end{lemma}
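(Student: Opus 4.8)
The plan is to mimic the proof of \cref{lemma:s1_my_express_recursive}, replacing the identity used there by the factorial identity $m_j/m_j! = 1/(m_j-1)!$, and then handling the extra bookkeeping that comes from whether the shifted index stays positive or drops to zero. No merging of two recursions will be needed here; the argument is more direct.

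First I would use $|\mathbf{m}| = n$ to rewrite $n = \sum_{j=1}^k m_j$ on the left-hand side, obtaining
\[
\sum_{\substack{\mathbf{m}\geq\mathbf{1}_k\\ |\mathbf{m}|=n}} \frac{n}{\prod_{i=1}^k m_i!}
= \sum_{j=1}^k \sum_{\substack{\mathbf{m}\geq\mathbf{1}_k\\ |\mathbf{m}|=n}} \frac{m_j}{\prod_{i=1}^k m_i!}
= \sum_{j=1}^k \sum_{\substack{\mathbf{m}\geq\mathbf{1}_k\\ |\mathbf{m}|=n}} \frac{1}{(m_j-1)!\prod_{i\neq j} m_i!},
\]
where the last equality uses $m_j\geq 1$, so that $m_j/m_j! = 1/(m_j-1)!$ is well defined. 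Next I would perform, for each $j$, the change of variables $m_j \mapsto m_j - 1$ in the inner sum. The $j$-th entry then ranges over the nonnegative integers while the remaining entries stay $\geq 1$, the total becomes $n-1$, and—using $0!=1$—the summand is $1/\prod_{i=1}^k m_i!$ on this shifted domain. I would then split the inner sum according to whether the (shifted) $j$-th entry is zero or at least one: in the zero case that entry contributes a harmless factor $0!=1$ to the denominator and can be deleted, producing a sum over length-$(k-1)$ multi-indices with all entries $\geq 1$ and total $n-1$; in the other case one gets a sum over length-$k$ multi-indices with all entries $\geq 1$ and total $n-1$. Both resulting sums are independent of $j$, so summing over the $k$ values of $j$ yields
\[
\sum_{j=1}^k\!\left(\sum_{\substack{\mathbf{m}\geq\mathbf{1}_{k}\\ |\mathbf{m}|=n-1}} \frac{1}{\prod_{i=1}^{k} m_i!}
+ \sum_{\substack{\mathbf{m}\geq\mathbf{1}_{k-1}\\ |\mathbf{m}|=n-1}} \frac{1}{\prod_{i=1}^{k-1} m_i!}\right)
= \sum_{\substack{\mathbf{m}\geq\mathbf{1}_{k}\\ |\mathbf{m}|=n-1}} \frac{k}{\prod_{i=1}^{k} m_i!}
+ \sum_{\substack{\mathbf{m}\geq\mathbf{1}_{k-1}\\ |\mathbf{m}|=n-1}} \frac{k}{\prod_{i=1}^{k-1} m_i!},
\]
which is exactly the right-hand side of \eqref{eq:s2_my_express_recursive}.

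The only delicate point—more a matter of getting the bookkeeping right than a real obstacle—is this case split after the shift: unlike in \cref{lemma:s1_my_express_recursive}, where removing an entry forced $|\mathbf{m}|\leq n-1$, here the shift $m_j\mapsto m_j-1$ keeps the length at $k$ but permits a zero entry, and it is precisely that zero-entry term that reproduces the length-$(k-1)$ sum. Finally I would note that the identity holds for all $n\in\mathbb{Z}$, including $n\leq 0$, where both sides are empty sums equal to $0$, and for $k=1$, where the length-$0$ sum contributes $1$ only when $n=1$ (using the conventions that the empty multi-index has $|\mathbf{m}|=0$ and empty product $1$); hence no separate base cases are required.
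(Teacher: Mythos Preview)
Your proposal is correct and follows essentially the same approach as the paper. The only cosmetic difference is ordering: the paper splits the inner sum according to $m_j\geq 2$ versus $m_j=1$ and then performs the change of variables on each piece, whereas you shift $m_j\mapsto m_j-1$ first and then split according to whether the shifted entry is positive or zero; the two procedures are equivalent and lead to the same two sums with the same factor~$k$.
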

	\begin{proof}
		Using the fact that $ \vert \mathbf{m} \vert = n $, we can rewrite the left hand side into
		\begin{align*}
		\sum_{
			\substack{
				\mathbf{m} \geq \mathbf{1}_{k}\\
				\vert \mathbf{m} \vert = n
			}
		}
		\frac{n}{\prod_{i=1}^{k}m_i!}
		=
		\sum_{
			\substack{
				\mathbf{m} \geq \mathbf{1}_{k}\\
				\vert \mathbf{m} \vert = n
			}
		}
		\sum_{j = 1}^{k}
		\frac{m_j}{\prod_{i=1}^{k}m_i!}
		=
		\sum_{j = 1}^{k}
		\sum_{
			\substack{
				\mathbf{m} \geq \mathbf{1}_{k}\\
				\vert \mathbf{m} \vert = n
			}
		}
		\frac{1}{
			(m_j-1)!
			\prod_{
				\substack{
					i=1\\
					i\neq j}
			}^{k}m_i!}.
		\end{align*}
		Note that $ m_j \geq 1 $, therefore, the quotients are always well-defined. We now split the multi-index $ \mathbf{m} $ according to whenever $ m_j \geq 2 $ or $ m_j = 1 $, that is,
		\begin{align*}
		\sum_{j = 1}^{k}
		\left(
		\sum_{
			\substack{
				\mathbf{m} \geq \mathbf{1}_{k}\\
				m_j\geq 2\\
				\vert \mathbf{m} \vert = n
			}
		}
		\frac{1}{
			(m_j-1)!
			\prod_{
				\substack{
					i=1\\
					i\neq j}
			}^{k}m_i!}
		+\sum_{
			\substack{
				\mathbf{m} \geq \mathbf{1}_{k}\\
				m_j = 1\\
				\vert \mathbf{m} \vert = n
			}
		}
		\frac{1}{
			\prod_{
				\substack{
					i=1\\
					i\neq j}
			}^{k}m_i!}
		\right).
		\end{align*}
		On the first inner sum, we perform a change of variables so that $ m_j-1 $ becomes $ m_j $. This means that the condition $ m_j \geq 2 $ becomes $ m_j \geq 1 $. Therefore, we can compress the conditions $ \mathbf{m} \geq \mathbf{1}_{k} $ and $ m_j \geq 2 $ in the old coordinates into just $ \mathbf{m} \geq \mathbf{1}_{k} $ in the new ones.\\
		On the second inner sum we perform a change of variables by removing the entry $ j $ of the multi-index $ \mathbf{m} $. For both sums, the conditions $ \vert \mathbf{m} \vert = n $ become $ \vert \mathbf{m} \vert = n-1 $. That is,
		\begin{align*}
		\sum_{
			\substack{
				\mathbf{m} \geq \mathbf{1}_{k}\\
				\vert \mathbf{m} \vert = n
			}
		}
		\frac{n}{\prod_{i=1}^{k}m_i!}
		&=
		\sum_{j = 1}^{k}
		\left(
		\sum_{
			\substack{
				\mathbf{m} \geq \mathbf{1}_{k}\\
				\vert \mathbf{m} \vert = n-1
			}
		}
		\frac{1}{
			\prod_{i=1}^{k}m_i!}
		+
		\sum_{
			\substack{
				\mathbf{m} \geq \mathbf{1}_{k-1}\\
				\vert \mathbf{m} \vert = n-1
			}
		}
		\frac{1}{
			\prod_{i=1}^{k-1}m_i!
		}
		\right)
		\\
		&=
		\sum_{
			\substack{
				\mathbf{m} \geq \mathbf{1}_{k}\\
				\vert \mathbf{m} \vert = n-1
			}
		}
		\frac{k}{
			\prod_{i=1}^{k}m_i!}
		+
		\sum_{
			\substack{
				\mathbf{m} \geq \mathbf{1}_{k-1}\\
				\vert \mathbf{m} \vert = n-1
			}
		}
		\frac{k}{
			\prod_{i=1}^{k-1}m_i!
		},
		\end{align*}
		which concludes the proof.
	\end{proof}
	We are now ready to introduce our new formulas for the Stirling numbers of the first and second kinds.
	\begin{theorem}
		The unsigned Stirling numbers of the first kind, $ \stirst{n}{k} $, with $ n,k \geq 0$ are given by
		\begin{align}
		\stirst{n}{k}
		=
		\frac{n!}{k!}
		\sum_{
			\substack{
				\mathbf{m} \geq \mathbf{1}_{k}\\
				\vert \mathbf{m} \vert = n
			}
		}
		\frac{1}{\prod_{i=1}^{k}m_i}. 
		\label{eq:stirling_sums_1}
		\end{align}
		\label{lemma:stirling_sums_1}
		\hfill$ \square $
	\end{theorem}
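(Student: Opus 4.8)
The plan is to prove the closed form by induction on $n$, using \cref{lemma:s1_my_express_recursive} as the engine and the classical Pascal-type recurrence for the unsigned Stirling numbers of the first kind, namely $\stirst{n}{k} = (n-1)\stirst{n-1}{k} + \stirst{n-1}{k-1}$, together with the boundary values $\stirst{0}{0}=1$ and $\stirst{n}{0}=\stirst{0}{k}=0$ for $n,k>0$.

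\textbf{Base cases and degenerate cases.} First I would dispatch the trivial situations. When $k=0$: the right-hand side of \cref{eq:stirling_sums_1} is an empty sum over multi-indices (the only multi-index of length $0$ is the empty one, with $\vert\mathbf{m}\vert=0$ and empty product equal to $1$), so the expression evaluates to $\frac{n!}{0!}\cdot[n=0] = [n=0]$, matching $\stirst{n}{0}$. When $k>n$ the constraint $\mathbf{m}\geq\mathbf{1}_k$, $\vert\mathbf{m}\vert=n$ has no solutions, so the sum is $0 = \stirst{n}{k}$; in particular this covers $n=0<k$. For $n=k\geq 1$ the only admissible multi-index is $\mathbf{m}=\mathbf{1}_k$, giving $\frac{k!}{k!}\cdot 1 = 1 = \stirst{k}{k}$. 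These settle the induction base.

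\textbf{Inductive step.} Assume \cref{eq:stirling_sums_1} holds for $n-1$ and all $k$; I want it for $n$ and all $k\geq 1$ (the $k=0$ case being already done for all $n$). Denote $T(n,k) := \frac{n!}{k!}\sum_{\mathbf{m}\geq\mathbf{1}_k,\,\vert\mathbf{m}\vert=n}\frac{1}{\prod_{i=1}^k m_i}$, so the goal is $T(n,k)=\stirst{n}{k}$. Multiplying the identity in \cref{eq:s1_my_express_recursive} through by $\frac{(n-1)!}{k!}$ and recognizing the two sums on its right-hand side, one gets
\begin{align*}
T(n,k) \;=\; (n-1)\,\frac{(n-1)!}{k!}\sum_{\substack{\mathbf{m}\geq\mathbf{1}_k\\\vert\mathbf{m}\vert=n-1}}\frac{1}{\prod_{i=1}^k m_i} \;+\; \frac{(n-1)!}{(k-1)!}\sum_{\substack{\mathbf{m}\geq\mathbf{1}_{k-1}\\\vert\mathbf{m}\vert=n-1}}\frac{1}{\prod_{i=1}^{k-1}m_i} \;=\; (n-1)\,T(n-1,k) + T(n-1,k-1).
\end{align*}
Here I used $\frac{n!}{k!} = n\cdot\frac{(n-1)!}{k!}$ to match the first term's coefficient (the $\frac{n}{\prod m_i}$ in \cref{eq:s1_my_express_recursive} carries the extra factor), and $\frac{n!}{k!}\cdot\frac{k}{n} = \frac{(n-1)!}{(k-1)!}$ — wait, more carefully: the left side of \cref{eq:s1_my_express_recursive} times $\frac{(n-1)!}{k!}$ is not quite $T(n,k)$, so I would instead divide \cref{eq:s1_my_express_recursive} by $n$ first to turn the numerator $n$ into $1$, then multiply by $\frac{n!}{k!}$; the first right-hand term becomes $\frac{n!}{k!}\cdot\frac{n-1}{n}\sum\frac{1}{\prod m_i}$ over $\vert\mathbf{m}\vert=n-1$, which equals $(n-1)\cdot\frac{(n-1)!}{k!}\sum\frac{1}{\prod m_i} = (n-1)T(n-1,k)$, and the second becomes $\frac{n!}{k!}\cdot\frac{k}{n}\sum\frac{1}{\prod m_i} = \frac{(n-1)!}{(k-1)!}\sum\frac{1}{\prod m_i} = T(n-1,k-1)$. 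By the induction hypothesis $T(n-1,k)=\stirst{n-1}{k}$ and $T(n-1,k-1)=\stirst{n-1}{k-1}$, so $T(n,k) = (n-1)\stirst{n-1}{k} + \stirst{n-1}{k-1} = \stirst{n}{k}$, completing the induction.

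\textbf{Main obstacle.} The only real subtlety is bookkeeping the factorial prefactors so that the recursion in \cref{eq:s1_my_express_recursive} (which has a bare $n$, $n-1$, and $k$ as numerators) lines up exactly with the Stirling recurrence coefficients $(n-1)$ and $1$; one must be careful to divide out the $n$ in the numerator on the left before attaching $\frac{n!}{k!}$, and to check that the index shift $k\mapsto k-1$ in the last sum produces precisely $\frac{(n-1)!}{(k-1)!}$ rather than $\frac{(n-1)!}{k!}$. A secondary point is making sure the induction is organized correctly — inducting on $n$ with $k$ ranging freely — and that the degenerate sums (empty multi-index, no admissible multi-indices) are interpreted with the standard conventions, which is why I handle $k=0$ and $k>n$ separately up front.
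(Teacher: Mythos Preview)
Your proposal is correct and follows essentially the same approach as the paper: both verify that the right-hand side of \cref{eq:stirling_sums_1} satisfies the same initial conditions and the same recurrence $\stirst{n}{k}=(n-1)\stirst{n-1}{k}+\stirst{n-1}{k-1}$ as the unsigned Stirling numbers, reducing the recurrence check to \cref{lemma:s1_my_express_recursive} via the same factorial rescaling. The only cosmetic difference is that the paper phrases it as ``show the closed form obeys the defining recurrence'' while you phrase it as an explicit induction on $n$; your mid-proof self-correction about how to attach the $\frac{n!}{k!}$ prefactor lands on exactly the manipulation the paper performs (multiply the lemma by $\frac{(n-1)!}{k!}$, or equivalently divide by $n$ then multiply by $\frac{n!}{k!}$).
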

	\begin{proof}
		We have to prove that the right hand side of \cref{eq:stirling_sums_1} has the properties of \cref{defi:stirling_1}.\\
		Consider $ n,k = 0 $, the initial condition is satisfied since the only valid argument of the sum is the 0-tuple. For both $ n>0 $, $ k = 0 $ and $ n=0 $, $ k > 0 $ there are no valid arguments in the sum, which results in zero and those initial conditions are also satisfied.\\
		For the remaining values, $ n,k > 0 $, we have to show that they follow the recurrence relation, that is,
		\begin{align*}
		\frac{n!}{k!}
		\sum_{
			\substack{
				\mathbf{m} \geq \mathbf{1}_{k}\\
				\vert \mathbf{m} \vert = n
			}
		}
		\frac{1}{\prod_{i=1}^{k}m_i}
		=
		(n-1)
		\frac{(n-1)!}{k!}
		\sum_{
			\substack{
				\mathbf{m} \geq \mathbf{1}_{k}\\
				\vert \mathbf{m} \vert = n-1
			}
		}
		\frac{1}{\prod_{i=1}^{k}m_i}
		+
		\frac{(n-1)!}{(k-1)!}
		\sum_{
			\substack{
				\mathbf{m} \geq \mathbf{1}_{k-1}\\
				\vert \mathbf{m} \vert = n-1
			}
		}
		\frac{1}{\prod_{i=1}^{k-1}m_i}. 
		\end{align*}
		Multiplying both sides by $ \frac{k!}{(n-1)!} $ we obtain
		\begin{align*}
		\sum_{
			\substack{
				\mathbf{m} \geq \mathbf{1}_{k}\\
				\vert \mathbf{m} \vert = n
			}
		}
		\frac{n}{\prod_{i=1}^{k}m_i}
		=
		\sum_{
			\substack{
				\mathbf{m} \geq \mathbf{1}_{k}\\
				\vert \mathbf{m} \vert = n-1
			}
		}
		\frac{n-1}{\prod_{i=1}^{k}m_i}
		+
		\sum_{
			\substack{
				\mathbf{m} \geq \mathbf{1}_{k-1}\\
				\vert \mathbf{m} \vert = n-1
			}
		}
		\frac{k}{\prod_{i=1}^{k-1}m_i} ,
		\end{align*}
		which is true from \cref{lemma:s1_my_express_recursive}.
	\end{proof}
	\begin{theorem}
		The Stirling numbers of the second kind, $ \stirnd{n}{k} $, with $ n,k \geq 0$ are given by
		\begin{align}
		\stirnd{n}{k}
		=
		\frac{n!}{k!}
		\sum_{
			\substack{
				\mathbf{m} \geq \mathbf{1}_{k}\\
				\vert \mathbf{m} \vert = n
			}
		}
		\frac{1}{\prod_{i=1}^{k}m_i!} .
		\label{eq:stirling_sums_2}
		\end{align}
		\label{lemma:stirling_sums_2}
		\hfill$ \square $
	\end{theorem}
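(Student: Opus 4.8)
The plan is to mirror exactly the strategy used for \cref{lemma:stirling_sums_1}, now exploiting \cref{lemma:s2_my_express_recursive} in place of \cref{lemma:s1_my_express_recursive}. Let $f(n,k)$ denote the right-hand side of \cref{eq:stirling_sums_2}. I would first recall (or cite \cref{defi:stirling_2}, the defining recurrence for the Stirling numbers of the second kind) that $\stirnd{n}{k}$ is characterized by the boundary values $\stirnd{0}{0}=1$, $\stirnd{n}{0}=0$ for $n>0$, $\stirnd{0}{k}=0$ for $k>0$, together with the recurrence $\stirnd{n}{k} = k\,\stirnd{n-1}{k} + \stirnd{n-1}{k-1}$ for $n,k>0$. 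It then suffices to verify that $f$ satisfies the same boundary conditions and the same recurrence.

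For the boundary conditions the argument is verbatim the one in the previous proof: when $n=k=0$ the only multi-index is the empty tuple and the empty product is $1$, giving $f(0,0)=\tfrac{0!}{0!}\cdot 1 = 1$; when $n>0,k=0$ or $n=0,k>0$ the constraint set $\{\mathbf m \geq \mathbf 1_k,\ |\mathbf m| = n\}$ is empty (no positive $k$-tuple sums to $0$, and a positive $0$-tuple cannot sum to $n>0$), so the sum is $0$ and $f$ vanishes.

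For the recurrence, fix $n,k>0$. I would substitute the closed form into $f(n,k) = k\,f(n-1,k) + f(n-1,k-1)$ and clear the common prefactors. Writing $f(n,k)=\tfrac{n!}{k!}\,T_2(n,k)$ with $T_2(n,k):=\sum_{\mathbf m\geq\mathbf 1_k,\ |\mathbf m|=n}\prod_{i=1}^k \tfrac{1}{m_i!}$, the claimed recurrence becomes
\begin{align*}
\frac{n!}{k!}\,T_2(n,k) = k\cdot\frac{(n-1)!}{k!}\,T_2(n-1,k) + \frac{(n-1)!}{(k-1)!}\,T_2(n-1,k-1).
\end{align*}
Multiplying through by $\tfrac{k!}{(n-1)!}$ reduces this to
\begin{align*}
\sum_{\substack{\mathbf m\geq\mathbf 1_k\\ |\mathbf m|=n}}\frac{n}{\prod_{i=1}^k m_i!} = \sum_{\substack{\mathbf m\geq\mathbf 1_k\\ |\mathbf m|=n-1}}\frac{k}{\prod_{i=1}^k m_i!} + \sum_{\substack{\mathbf m\geq\mathbf 1_{k-1}\\ |\mathbf m|=n-1}}\frac{k}{\prod_{i=1}^{k-1} m_i!},
\end{align*}
which is precisely \cref{eq:s2_my_express_recursive} from \cref{lemma:s2_my_express_recursive}. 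This closes the proof.

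The only place that requires any care — and the analogue of the ``main obstacle'' — is bookkeeping the prefactor arithmetic so that the $n/k$ factor on the left and the two $k$ factors on the right line up exactly with the statement of \cref{lemma:s2_my_express_recursive}; note in particular that unlike the first-kind case, \emph{both} terms on the right carry a factor $k$ rather than an $(n-1)$, because the $k\,\stirnd{n-1}{k}$ term contributes $k\cdot\tfrac{(n-1)!}{k!} = \tfrac{(n-1)!}{(k-1)!}$, matching the coefficient structure of \cref{eq:s2_my_express_recursive}. Everything else is a direct transcription of the proof of \cref{lemma:stirling_sums_1}.
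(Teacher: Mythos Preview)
Your proposal is correct and follows essentially the same approach as the paper: verify the boundary values of \cref{defi:stirling_2} directly, then reduce the recurrence $\stirnd{n}{k}=k\,\stirnd{n-1}{k}+\stirnd{n-1}{k-1}$ for the closed form to \cref{eq:s2_my_express_recursive} by multiplying through by $k!/(n-1)!$. The paper's proof is identical in structure and in the lemma it invokes.
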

	\begin{proof}
		We have to prove that the right hand side of \cref{eq:stirling_sums_2} has the properties of \cref{defi:stirling_2}.\\
		Consider $ n,k = 0 $, the initial condition is satisfied since the only valid argument of the sum is the 0-tuple. For both $ n>0 $, $ k = 0 $ and $ n=0 $, $ k > 0 $ there are no valid arguments in the sum, which results in zero and those initial conditions are also satisfied.\\
		For the remaining values, $ n,k > 0 $, we have to show that they follow the recurrence relation, that is,
		\begin{align*}
		\frac{n!}{k!}
		\sum_{
			\substack{
				\mathbf{m} \geq \mathbf{1}_{k}\\
				\vert \mathbf{m} \vert = n
			}
		}
		\frac{1}{\prod_{i=1}^{k}m_i!}
		=
		k
		\frac{(n-1)!}{k!}
		\sum_{
			\substack{
				\mathbf{m} \geq \mathbf{1}_{k}\\
				\vert \mathbf{m} \vert = n-1
			}
		}
		\frac{1}{\prod_{i=1}^{k}m_i!}
		+
		\frac{(n-1)!}{(k-1)!}
		\sum_{
			\substack{
				\mathbf{m} \geq \mathbf{1}_{k-1}\\
				\vert \mathbf{m} \vert = n-1
			}
		}
		\frac{1}{\prod_{i=1}^{k-1}m_i!}. 
		\end{align*}
		Multiplying both sides by $ \frac{k!}{(n-1)!} $ we obtain
		\begin{align*}
		\sum_{
			\substack{
				\mathbf{m} \geq \mathbf{1}_{k}\\
				\vert \mathbf{m} \vert = n
			}
		}
		\frac{n}{\prod_{i=1}^{k}m_i!}
		=
		\sum_{
			\substack{
				\mathbf{m} \geq \mathbf{1}_{k}\\
				\vert \mathbf{m} \vert = n-1
			}
		}
		\frac{k}{\prod_{i=1}^{k}m_i!}
		+
		\sum_{
			\substack{
				\mathbf{m} \geq \mathbf{1}_{k-1}\\
				\vert \mathbf{m} \vert = n-1
			}
		}
		\frac{k}{\prod_{i=1}^{k-1}m_i!}, 
		\end{align*}
		which is true from \cref{lemma:s2_my_express_recursive}.
	\end{proof}
	\begin{proof}[Proof of \cref{lemma:stirling_sums_multi_1}]
		We can break the multi-index $ \overline{\mathbf{m}} $ with $ \vert \mathbf{m} \vert $ elements into the set of multi-indexes $ (\overline{\mathbf{m}}^i) $, with $ 1\leq i\leq k $, such that each $ \overline{\mathbf{m}}^i $ has $ m_i $ elements and is associated with $ M_i $. In particular, this means that $ \overline{\mathbf{m}}\mathbf{m} = \mathbf{M} $ becomes $ \vert \overline{\mathbf{m}}^i \vert =  M_i $, for every $ i $ with $ 1\leq i\leq k $. Using this, the left hand side of \cref{eq:stirling_sums_multi_1} can be written as the product
		\begin{align*}
		\prod_{i=1}^{k}
		\left(
		\sum_{ 
			\substack{
				\overline{\mathbf{m}}^i \geq \mathbf{1}_{m_i}
				\\
				\vert \overline{\mathbf{m}}^i \vert = M_i
			}
		}
		\frac{1}{\prod_{j=1}^{m_i} \overline{m}^i_j}
		\right).
		\end{align*}
		The result comes directly from applying \cref{lemma:stirling_sums_1}.
	\end{proof}
	\begin{proof}[Proof of \cref{lemma:stirling_sums_multi_2}]
		Using the exact same approach as in the proof of \cref{lemma:stirling_sums_multi_1}, the left hand side of \cref{eq:stirling_sums_multi_2} can be written as the product
		\begin{align*}
		\prod_{i=1}^{k}
		\left(
		\sum_{ 
			\substack{
				\overline{\mathbf{m}}^i \geq \mathbf{1}_{m_i}
				\\
				\vert \overline{\mathbf{m}}^i \vert = M_i
			}
		}
		\frac{1}{\prod_{j=1}^{m_i} \overline{m}^i_j !}
		\right).
		\end{align*}
		The result comes directly from applying \cref{lemma:stirling_sums_2}.
	\end{proof}
	\begin{lemma}
		For $ n \geq 0 $, we have that
		\begin{align}
		\sum_{k\geq 1} (-1)^k  \stirst{n}{k} 
		=
		\begin{cases}
		-1 & \text{if $n=1$},\\
		0 & \text{otherwise}.
		\end{cases}
		\end{align}
		\label{lemma:stirling_alternated_1}
		\hfill$ \square $
	\end{lemma}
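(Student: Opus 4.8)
The plan is to argue by induction on $n$ using the defining recurrence $\stirst{n}{k} = (n-1)\stirst{n-1}{k} + \stirst{n-1}{k-1}$ (valid for $n,k>0$) from \cref{defi:stirling_1}. Set $a_n := \sum_{k\geq 1}(-1)^k\stirst{n}{k}$. The base cases follow from the initial conditions: for $n=0$ every term with $k\geq 1$ vanishes, so $a_0=0$; and for $n=1$ only $k=1$ contributes, giving $a_1=-\stirst{1}{1}=-1$.

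For the inductive step fix $n\geq 2$ and insert the recurrence into each term:
\begin{align*}
a_n
=
(n-1)\sum_{k\geq 1}(-1)^k\stirst{n-1}{k}
+
\sum_{k\geq 1}(-1)^k\stirst{n-1}{k-1}.
\end{align*}
The first sum is $(n-1)a_{n-1}$. In the second, substitute $j=k-1$ to get $-\sum_{j\geq 0}(-1)^j\stirst{n-1}{j} = -\bigl(\stirst{n-1}{0}+a_{n-1}\bigr)$; since $n-1\geq 1$ we have $\stirst{n-1}{0}=0$, so the second sum equals $-a_{n-1}$. Hence $a_n=(n-2)a_{n-1}$.

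Reading off the conclusion: for $n=2$ the prefactor $n-2$ vanishes, so $a_2=0$; for $n\geq 3$ the inductive hypothesis gives $a_{n-1}=0$ (since $n-1\geq 2$), so $a_n=0$ as well. Combined with $a_0=0$ and $a_1=-1$ this is precisely the claimed formula.

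The only subtle point is the index bookkeeping: after the shift $j=k-1$ the term $\stirst{n-1}{0}$ appears, and discarding it is legitimate exactly because $n\geq 2$ forces $\stirst{n-1}{0}=0$; this is also why the ``$-1$'' survives only at $n=1$. As an alternative one could invoke the classical generating-function identity $\sum_{k\geq 0}\stirst{n}{k}x^k = x(x+1)\cdots(x+n-1)$ for $n\geq 1$ and evaluate at $x=-1$, where the factor $x+1$ annihilates the value for $n\geq 2$ while yielding $-1$ for $n=1$, and then subtract $\stirst{n}{0}$ to restrict the sum to $k\geq 1$.
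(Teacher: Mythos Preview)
Your proof is correct and follows essentially the same approach as the paper: induction on $n$ using the recurrence $\stirst{n}{k}=(n-1)\stirst{n-1}{k}+\stirst{n-1}{k-1}$, splitting the alternating sum into two pieces and shifting the index in the second. Your derivation of the closed relation $a_n=(n-2)a_{n-1}$ is a slightly cleaner way to handle the base case $n=2$ than the paper's direct computation $\stirst{2}{2}-\stirst{2}{1}=0$, and your remark about the rising-factorial generating function is a nice alternative, but the substance is the same.
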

	\begin{proof}
		The first cases can easily be verified from inspection.
		Note that the sum is finite since $ \stirst{n}{k} = 0 $ when $ k>n $. For $ n=0 $ this is a zero sum and for $ n=1 $ we have $ (-1)\stirst{1}{1} = -1 $.\\
		The remaining terms are proven by induction. Assume the sum to be zero for $ n > 1 $. We expand $ \stirst{n+1}{k} $ according to its recurrence relation
		\begin{align*}
		\sum_{k\geq 1} (-1)^k \stirst{n+1}{k}  
		&=
		\sum_{k\geq 1} (-1)^k 
		\left[
		n\stirst{n}{k} 
		+
		\stirst{n}{k-1} 
		\right]
		\\
		&=
		n
		\sum_{k\geq 1} (-1)^k
		\stirst{n}{k} 
		+ 
		\sum_{k\geq 1} (-1)^k
		\stirst{n}{k-1} 
		\\
		&=
		0.
		\end{align*}
		The base case of the induction process, $ n=2 $, is simply $ \stirst{2}{2} - \stirst{2}{1} = 1 -1 = 0 $.
	\end{proof}
	\begin{lemma}
		For $ n \geq 0 $, we have that
		\begin{align}
		\sum_{k\geq 1} (-1)^k
		(k-1)!
		\stirnd{n}{k} 
		=
		\begin{cases}
		-1 & \text{if $n=1$},\\
		0 & \text{otherwise}.
		\end{cases}
		\label{eq:stirling_alternated_2}
		\end{align}
		\label{lemma:stirling_alternated_2}
		\hfill$ \square $
	\end{lemma}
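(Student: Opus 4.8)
The plan is to mirror the proof of \cref{lemma:stirling_alternated_1}: dispatch the small cases by inspection and handle the rest through the recurrence $\stirnd{n+1}{k} = k\,\stirnd{n}{k} + \stirnd{n}{k-1}$. Set $a_n := \sum_{k\geq 1}(-1)^k (k-1)!\,\stirnd{n}{k}$; this sum is finite since $\stirnd{n}{k}=0$ for $k>n$. For $n=0$ every summand vanishes because $\stirnd{0}{k}=0$ for $k\geq 1$, so $a_0=0$; for $n=1$ only the term $k=1$ survives and $a_1 = (-1)^1\,0!\,\stirnd{1}{1} = -1$; and as a sanity check $a_2 = -\stirnd{2}{1} + 1!\,\stirnd{2}{2} = -1+1 = 0$.

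For $n\geq 1$ I would substitute the recurrence and split the sum:
\begin{align*}
a_{n+1}
= \sum_{k\geq 1}(-1)^k (k-1)!\,k\,\stirnd{n}{k}
+ \sum_{k\geq 1}(-1)^k (k-1)!\,\stirnd{n}{k-1}.
\end{align*}
In the first sum rewrite $(k-1)!\,k = k!$. In the second sum shift the summation index via $j = k-1$, which turns it into $-\sum_{j\geq 0}(-1)^j j!\,\stirnd{n}{j}$; since $n\geq 1$ we have $\stirnd{n}{0}=0$, so the $j=0$ term disappears and the second sum equals $-\sum_{j\geq 1}(-1)^j j!\,\stirnd{n}{j}$. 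The two contributions are then exact negatives of one another, giving $a_{n+1}=0$ for every $n\geq 1$; equivalently $a_m = 0$ for all $m\geq 2$. Combining this with $a_0 = 0$ and $a_1 = -1$ yields the statement.

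The argument is essentially a one-line cancellation once the recurrence is in place, so there is no real obstacle; the only place that needs attention is the boundary term $\stirnd{n}{0}$ produced by the index shift. It vanishes precisely when $n\geq 1$, which is exactly why $n=1$ — where shifting at the previous level would reach the nonzero value $\stirnd{0}{0}=1$ — is the unique exceptional case. Note that, in contrast to the first-kind identity, no induction hypothesis is needed: the weight $(k-1)!$ is chosen so that $k\,(k-1)! = k!$ matches the natural weight attached to $\stirnd{n}{k-1}$ after re-indexing, producing the exact cancellation.
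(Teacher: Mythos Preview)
Your proof is correct and follows essentially the same route as the paper: apply the recurrence $\stirnd{n}{k}=k\,\stirnd{n-1}{k}+\stirnd{n-1}{k-1}$, shift the index in the second sum, and observe the two sums cancel up to the boundary term. The only cosmetic difference is that the paper keeps the $j=0$ term after the shift and reads off the result directly as $-\stirnd{n-1}{0}$ (which is $-1$ when $n=1$ and $0$ otherwise), whereas you discard that term for $n\geq 1$ and verify the case $n=1$ separately by inspection.
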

	\begin{proof}
		Firstly, note that for $ n=0 $, the sum is trivially $ 0 $.
		Consider now $ n>0 $. Then, from \cref{defi:stirling_2}
		\begin{align*}
		\sum_{k\geq 1} (-1)^k  (k-1)!\stirnd{n}{k} 
		=
		\sum_{k\geq 1} (-1)^k  k!\stirnd{n-1}{k}
		+
		\sum_{k\geq 1} (-1)^k  (k-1)!\stirnd{n-1}{k-1}. 
		\end{align*}
		We perform the change of variables $ \overline{k} = k-1 $ on the second sum of the right hand side, which gives
		\begin{align*}
		\sum_{k\geq 1} (-1)^k  (k-1)!\stirnd{n}{k} 
		&=
		\sum_{k\geq 1} (-1)^k  k!\stirnd{n-1}{k}
		+
		\sum_{\overline{k}\geq 0} (-1)^{\overline{k}+1}  \overline{k}!\stirnd{n-1}{\overline{k}} 
		\\
		&=
		-\stirnd{n-1}{0} ,
		\end{align*}
		which is $ -1 $ for $ n = 1 $ and $ 0 $ for $ n>1 $.
	\end{proof}
	\begin{proof}[Proof of \cref{lemma:stirling_sums_alternated_multi_1}]
		We can write the left hand side of \cref{eq:stirling_sums_alternated_multi_1} as the product
		\begin{align*}
		\prod_{i=1}^{k}
		\sum_{m_i \geq 1}
		(-1)^{m_i}
		\stirst{M_i}{m_i}.
		\end{align*}
		Note that although the outer sum of \cref{eq:stirling_sums_alternated_multi_1} looks infinite, it only has a finite number of non-zero elements. Therefore, there are no convergence issues when we do this rearrangement.\\
		Consider $ k>0 $. If $ \mathbf{M} \neq \mathbf{1}_{k} $, then there will be at least one $ i $ with $ 1\leq i\leq k $ such that $ M_i \neq 1 $. From \cref{lemma:stirling_alternated_1}, that term will be zero, which means that the whole product is zero. For $ \mathbf{M} = \mathbf{1}_{k} $ the result is immediate. In the case $ k=0 $ we have on the left hand side a sum over one valid index (the 0-tuple) of an empty product, which results in $ 1 = (-1)^0 $.
	\end{proof}
	\begin{proof}[Proof of \cref{lemma:stirling_sums_alternated_multi_2}]
		We can write the left hand side of \cref{eq:stirling_sums_alternated_multi_2} as the product
		\begin{align*}
		\prod_{i = 1}^{k}
		\sum_{m_i \geq 1}
		(-1)^{m_i}
		(m_i -1)!
		\stirnd{M_i}{m_i}.
		\end{align*}
		Using the exact same approach as in the proof of \cref{lemma:stirling_sums_alternated_multi_1}, the result is straightforward from \cref{lemma:stirling_alternated_2}.
	\end{proof}
	\begin{proof}[Proof of \cref{lemma:basis_comp_multi_expansion}]
		The proof is by induction. Assume this to be satisfied for $ m_{12} = a-1 \geq 0$. Then, for the case $ m_{12} =a $, the left hand side of \cref{eq:basis_component_multiplicity_expansion} can be written as
		\begin{align*}
		\leftidx{^b}{}f_i^{\mathbf{k}}
		\left(x;
		\begin{bmatrix}
		a-1\\
		1\\
		\overline{\mathbf{m}}
		\end{bmatrix},
		\begin{bmatrix}
		w_{j_1} \| w_{j_2}\\
		w_{j_1} \| w_{j_2}\\
		\mathbf{w}_{\overline{\mathbf{s}}}
		\end{bmatrix},
		\begin{bmatrix}
		x_{j_{12}} \\
		x_{j_{12}} \\
		\mathbf{x}_{\overline{\mathbf{s}}}
		\end{bmatrix}
		\right).
		\end{align*}
		From assumption, this can be expanded into
		\begin{align*}
		\sum_{
			\substack{
				m_1,m_2 \geq 0\\
				m_1 + m_2 = a-1	
			}
		}
		\binom{a-1}{m_1,m_2}
		\leftidx{^b}{}f_i^{\mathbf{k}}
		\left(x;
		\begin{bmatrix}
		m_{1}\\
		m_{2}\\
		1\\
		\overline{\mathbf{m}}
		\end{bmatrix}
		,
		\begin{bmatrix}
		w_{j_1}\\
		w_{j_2}\\
		w_{j_1} \| w_{j_2}\\ 
		\mathbf{w}_{\overline{\mathbf{s}}}
		\end{bmatrix}
		,
		\begin{bmatrix}
		x_{j_{12}} \\ 
		x_{j_{12}} \\
		x_{j_{12}} \\
		\mathbf{x}_{\overline{\mathbf{s}}}
		\end{bmatrix}
		\right).
		\end{align*}
		Using \cref{thm:basis_f_additive} of \cref{defi:decomp_pure_basis} in order to expand over the weight $ w_{j_1} \| w_{j_2} $, we get
		\begin{align*}
		\sum_{
			\substack{
				m_1,m_2 \geq 0\\
				m_1 + m_2 = a-1	
			}
		}
		\binom{a-1}{m_1,m_2}
		&\left[
		\leftidx{^b}{}f_i^{\mathbf{k}}
		\left(x;
		\begin{bmatrix}
		m_{1}+1\\
		m_{2}\\
		\overline{\mathbf{m}}
		\end{bmatrix}
		,
		\begin{bmatrix}
		w_{j_1}\\
		w_{j_2}\\
		\mathbf{w}_{\overline{\mathbf{s}}}
		\end{bmatrix}
		,
		\begin{bmatrix}
		x_{j_{12}} \\ 
		x_{j_{12}} \\
		\mathbf{x}_{\overline{\mathbf{s}}}
		\end{bmatrix}
		\right)
		\right.
		\\
		&\quad+
		\left.
		\leftidx{^b}{}f_i^{\mathbf{k}}
		\left(x;
		\begin{bmatrix}
		m_{1}\\
		m_{2}+1\\
		\overline{\mathbf{m}}
		\end{bmatrix}
		,
		\begin{bmatrix}
		w_{j_1}\\
		w_{j_2}\\
		\mathbf{w}_{\overline{\mathbf{s}}}
		\end{bmatrix}
		,
		\begin{bmatrix}
		x_{j_{12}} \\ 
		x_{j_{12}} \\
		\mathbf{x}_{\overline{\mathbf{s}}}
		\end{bmatrix}
		\right)
		\right].
		\end{align*}
		We split this sum into two according to the two terms. Furthermore, we change variables such that $ m_1 +1 $ becomes $ m_1 $ on the first sum and $ m_2+1 $ becomes $ m_2 $ on the second sum. This gives us
		\begin{align*}
		\left[
		\sum_{
			\substack{
				m_1 \geq 1\\
				m_2 \geq 0\\
				m_1 + m_2 = a	
			}
		}
		\binom{a-1}{m_1-1,m_2}
		+
		\sum_{
			\substack{
				m_1 \geq 0\\
				m_2 \geq 1\\
				m_1 + m_2 = a	
			}
		}
		\binom{a-1}{m_1,m_2-1}
		\right]
		\leftidx{^b}{}f_i^{\mathbf{k}}
		\left(x;
		\begin{bmatrix}
		m_{1}\\
		m_{2}\\
		\overline{\mathbf{m}}
		\end{bmatrix}
		,
		\begin{bmatrix}
		w_{j_1}\\
		w_{j_2}\\
		\mathbf{w}_{\overline{\mathbf{s}}}
		\end{bmatrix}
		,
		\begin{bmatrix}
		x_{j_{12}} \\ 
		x_{j_{12}} \\
		\mathbf{x}_{\overline{\mathbf{s}}}
		\end{bmatrix}
		\right).
		\end{align*}
		These two sums can be unified into one by appending cases that correspond to zero terms until their index sets match. In particular, in the first sum we can freely append the cases $ m_1 = 0$ and in the second sum we can append the cases $ m_2 = 0 $. Then, we end up with a single sum over the index set $ m_1,m_2 \geq 0 $, with $ m_1+m_2 = a $.\\
		Then, from the fact that $ \binom{n}{m_1,m_2} = \binom{n-1}{m_1-1,m_2} + \binom{n-1}{m_1,m_2-1} $, our expression simplifies into the right hand side of \cref{eq:basis_component_multiplicity_expansion} and the result applies to $ m_{12}=a $.\\		
		In base case $ m_{12} = 0$, the only valid term in the sum is the one indexed with $ m_1,m_2 = 0 $. It is clear that for this case the equality is satisfied, which concludes the proof.
	\end{proof}
	\begin{proof}[Proof of \cref{lemma:coupl_comp_multi_expansion}]
		The proof is by induction. Assume this to be satisfied for $m_{12} = a-1 \geq 0 $. Then, for the case $ m_{12} = a $, the left hand side of	\cref{eq:coupl_component_multiplicity_expansion} can be written as
		\begin{align*}
		f_i^{
			\overline{\mathbf{k}}
			+ a \, 1_j
		}
		\left(x;
		\begin{bmatrix}
		a-1\\
		1\\
		\overline{\mathbf{m}}
		\end{bmatrix}
		,
		\begin{bmatrix}
		w_{j_1} \| w_{j_2}\\
		w_{j_1} \| w_{j_2}\\ 
		\mathbf{w}_{\overline{\mathbf{s}}}
		\end{bmatrix}
		,
		\begin{bmatrix}
		x_{j_{12}}\\
		x_{j_{12}}\\
		\mathbf{x}_{\overline{\mathbf{s}}}
		\end{bmatrix}
		\right).
		\end{align*}
		From assumption, this can be expanded into
		\begin{align*}
		\sum_{ 
			\substack{
				m_1,m_2 \geq 0\\
				m_1,m_2 \leq a-1\\
				m_1 + m_2 \geq a-1
			}
		}
		B(m_1,m_2,a-1)
		f_i^{
			\overline{\mathbf{k}}
			+ (m_{1}+m_{2}+1) \, 1_j 
		}
		\left(x;
		\begin{bmatrix}
		m_{1}\\
		m_{2}\\
		1\\
		\overline{\mathbf{m}}
		\end{bmatrix}
		,
		\begin{bmatrix}
		w_{j_1} \\
		w_{j_2}\\
		w_{j_1} \| w_{j_2}\\ 
		\mathbf{w}_{\overline{\mathbf{s}}}
		\end{bmatrix}
		,
		\begin{bmatrix}
		x_{j_{12}} \\ 
		x_{j_{12}} \\
		x_{j_{12}} \\ 
		\mathbf{x}_{\overline{\mathbf{s}}}
		\end{bmatrix}
		\right).
		\end{align*}
		Using \cref{thm:decomp_f_dependence} of \cref{thm:decomp_f} in order to expand over the weight $ w_{j_1} \| w_{j_2} $, we get
		\begin{align*}
		\sum_{ 
			\substack{
				m_1,m_2 \geq 0\\
				m_1,m_2 \leq a-1\\
				m_1 + m_2 \geq a-1
			}
		}
		B(m_1,m_2,a-1)
		&\left[
		f_i^{
			\overline{\mathbf{k}}
			+ (m_{1}+m_{2}+1) \, 1_j 
		}
		\left(x;
		\begin{bmatrix}
		m_{1}+1\\
		m_{2}\\
		\overline{\mathbf{m}}
		\end{bmatrix}
		,
		\begin{bmatrix}
		w_{j_1} \\
		w_{j_2}\\
		\mathbf{w}_{\overline{\mathbf{s}}}
		\end{bmatrix}
		,
		\begin{bmatrix}
		x_{j_{12}} \\ 
		x_{j_{12}} \\
		\mathbf{x}_{\overline{\mathbf{s}}}
		\end{bmatrix}
		\right)
		\right.
		\\
		&\quad+
		f_i^{
			\overline{\mathbf{k}}
			+ (m_{1}+m_{2}+1) \, 1_j 
		}
		\left(x;
		\begin{bmatrix}
		m_{1}\\
		m_{2}+1\\
		\overline{\mathbf{m}}
		\end{bmatrix}
		,
		\begin{bmatrix}
		w_{j_1} \\
		w_{j_2}\\
		\mathbf{w}_{\overline{\mathbf{s}}}
		\end{bmatrix}
		,
		\begin{bmatrix}
		x_{j_{12}} \\ 
		x_{j_{12}} \\
		\mathbf{x}_{\overline{\mathbf{s}}}
		\end{bmatrix}
		\right)
		\\
		&\quad+
		\left.
		f_i^{
			\overline{\mathbf{k}}
			+ (m_{1}+m_{2}+2) \, 1_j 
		}
		\left(x;
		\begin{bmatrix}
		m_{1}+1\\
		m_{2}+1\\
		\overline{\mathbf{m}}
		\end{bmatrix}
		,
		\begin{bmatrix}
		w_{j_1} \\
		w_{j_2}\\
		\mathbf{w}_{\overline{\mathbf{s}}}
		\end{bmatrix}
		,
		\begin{bmatrix}
		x_{j_{12}} \\ 
		x_{j_{12}} \\
		\mathbf{x}_{\overline{\mathbf{s}}}
		\end{bmatrix}
		\right)
		\right].
		\end{align*}
		We split this sum into three according to the three terms. Furthermore, we change variables such that $ m_1 +1 $ becomes $ m_1 $ on the first sum, $ m_2+1 $ becomes $ m_2 $ on the second sum and we apply both changes on the third sum. This gives us
		\begin{align*}
		&
		\left[
		\sum_{ 
			\substack{
				m_1 \geq 1\\
				m_2 \geq 0\\
				m_1 \leq a\\
				m_2 \leq a-1\\
				m_1 + m_2 \geq a
			}
		}
		B(m_1-1,m_2,a-1)
		+
		\sum_{ 
			\substack{
				m_1 \geq 0\\
				m_2 \geq 1\\
				m_1 \leq a-1\\
				m_2 \leq a\\
				m_1 + m_2 \geq a
			}
		}
		B(m_1,m_2-1,a-1)
		\right.
		\\
		&+
		\left.
		\sum_{ 
			\substack{
				m_1,m_2 \geq 1\\
				m_1,m_2 \leq a\\
				m_1 + m_2 \geq a+1
			}
		}
		B(m_1-1,m_2-1,a-1)
		\right]
		f_i^{
			\overline{\mathbf{k}}
			+ (m_{1}+m_{2}) \, 1_j 
		}
		\left(x;
		\begin{bmatrix}
		m_{1}\\
		m_{2}\\
		\overline{\mathbf{m}}
		\end{bmatrix}
		,
		\begin{bmatrix}
		w_{j_1} \\
		w_{j_2}\\
		\mathbf{w}_{\overline{\mathbf{s}}}
		\end{bmatrix}
		,
		\begin{bmatrix}
		x_{j_{12}} \\ 
		x_{j_{12}} \\
		\mathbf{x}_{\overline{\mathbf{s}}}
		\end{bmatrix}
		\right).
		\end{align*}
		These three sums can be unified into one by appending cases that correspond to zero terms until their index sets match. In particular, in the first sum we can freely append the cases $ m_1 = 0$ and the cases $ m_2 =a $. Similarly, in the second sum, we can append the cases $ m_1 = a $ and the cases $ m_2 = 0 $. Finally, in the last sum, we can append the cases $ m_1 = 0 $, the cases $ m_2 =0 $ and the cases $ m_1+m_2 = a $. Then, we end up with a single sum over the index set $ m_1,m_2 \geq 0 $, with $ m_1,m_2 \leq a$, and $ m_1+m_2 \geq a  $.
		Note that
		\begin{align*}
		B(m_1-1,m_2,a-1)
		+
		B(m_1,m_2-1,a-1)
		+
		B(m_1-1,m_2-1,a-1),
		\end{align*}
		which is equal to
		\begin{align*}
		\binom{a-1}{
			a - m_1,
			a - m_2-1,
			m_1 + m_2 - a}
		&+
		\binom{a-1}{
			a - m_1-1,
			a - m_2,
			m_1 + m_2 - a}
		\\
		&+
		\binom{a-1}{
			a - m_1,
			a - m_2,
			m_1 + m_2 - a-1},
		\end{align*}
		gives us
		\begin{align*}
		\binom{a}{a - m_1,a - m_2,m_1 + m_2 - a} = B(m_1,m_2,a),
		\end{align*}
		from the fact that $ \binom{n}{m_1,m_2,m_3} = \binom{n-1}{m_1-1,m_2,m_3} + \binom{n-1}{m_1,m_2-1,m_3} +
		\binom{n-1}{m_1,m_2,m_3-1} $. Therefore, our expression simplifies into the right hand side of \cref{eq:coupl_component_multiplicity_expansion} and the result applies to $ m_{12} = a $.\\		
		In base case $ m_{12} = 0$, the only valid term in the sum is the one indexed with $ m_1,m_2 = 0 $. It is clear that for this case the equality is satisfied, which concludes the proof.
	\end{proof}
	\begin{lemma}
		For every $ m_1,m_2\in\mathbb{N} $, we have that
		\begin{align}
		\sum_{n = 0}^{m_1} (-1)^{n}\binom{m_1}{n} \binom{n +m_2-1}{m_1-1}
		=0.
		\label{eq:comb_sum_coupling_to_basis_minor}
		\end{align}
		\label{lemma:comb_sum_coupling_to_basis_minor}
		\hfill$ \square $
	\end{lemma}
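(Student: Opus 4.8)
The plan is to recognize the inner binomial coefficient as a coefficient in a generating function: since $(1+x)^{N}=\sum_{k}\binom{N}{k}x^{k}$, we have $\binom{n+m_2-1}{m_1-1}=[x^{m_1-1}]\,(1+x)^{n+m_2-1}$, where $[x^{j}]$ denotes extraction of the coefficient of $x^{j}$. Substituting this into the left-hand side of \cref{eq:comb_sum_coupling_to_basis_minor} and interchanging the finite sum over $n$ with the (linear) coefficient extraction is the crux of the argument; everything else is the binomial theorem.

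Carrying this out, I would first write
\[
\sum_{n=0}^{m_1}(-1)^n\binom{m_1}{n}\binom{n+m_2-1}{m_1-1}
=[x^{m_1-1}]\,(1+x)^{m_2-1}\sum_{n=0}^{m_1}\binom{m_1}{n}\bigl(-(1+x)\bigr)^{n}.
\]
By the binomial theorem the inner sum is $\bigl(1-(1+x)\bigr)^{m_1}=(-x)^{m_1}$, so the whole expression equals
\[
(-1)^{m_1}\,[x^{m_1-1}]\;x^{m_1}(1+x)^{m_2-1}.
\]
Finally, the formal power series $x^{m_1}(1+x)^{m_2-1}$ contains no monomial of degree strictly less than $m_1$ (it is $x^{m_1}$ times a series with no negative-degree terms), hence its coefficient of $x^{m_1-1}$ is $0$, which proves the claim.

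I do not expect a genuine obstacle here; the only care needed is with the conventions and degenerate cases. The identity really only needs $m_1\ge 1$ so that $\binom{m_1}{n}$ and the index $m_1-1$ make sense, and one may sanity-check $m_1=1$ directly, where the sum is $\binom{1}{0}\binom{m_2-1}{0}-\binom{1}{1}\binom{m_2}{0}=1-1=0$, in agreement with the general computation. If one prefers to avoid generating functions, an equivalent route is to observe that $n\mapsto\binom{n+m_2-1}{m_1-1}$ is a polynomial in $n$ of degree $m_1-1$, together with the standard fact that $\sum_{n=0}^{N}(-1)^{n}\binom{N}{n}P(n)=(-1)^{N}\Delta^{N}P(0)=0$ for every polynomial $P$ of degree less than $N$; applying this with $N=m_1$ again yields \cref{eq:comb_sum_coupling_to_basis_minor}.
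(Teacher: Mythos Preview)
Your proof is correct and takes a genuinely different route from the paper's. The paper argues by induction on $m_1$: assuming the identity at $m_1$, it applies $\binom{n+m_2-1}{m_1-1}=\binom{n+m_2}{m_1}-\binom{n+m_2-1}{m_1}$, shifts the index in one of the two resulting sums, pads both with zero terms, and recombines via Pascal's rule $\binom{m_1}{n-1}+\binom{m_1}{n}=\binom{m_1+1}{n}$ to obtain the identity at $m_1+1$; the base case $m_1=1$ is checked by hand. Your generating-function computation (and the equivalent finite-difference observation that $n\mapsto\binom{n+m_2-1}{m_1-1}$ has degree $m_1-1$) is shorter and explains \emph{why} the sum vanishes, while the paper's argument stays entirely within elementary binomial manipulations and needs no formal-power-series language. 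Both are fine; yours is more conceptual, the paper's more in keeping with the purely combinatorial style of the surrounding lemmas.
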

	\begin{proof}
		The proof is by induction on $ m_1 $. Assume \cref{eq:comb_sum_coupling_to_basis_minor} is valid for a particular $ m_1\geq 1 $. Using $ \binom{n-1}{k-1} = \binom{n}{k} - \binom{n-1}{k} $, we expand \cref{eq:comb_sum_coupling_to_basis_minor} into
		\begin{align*}
		\sum_{n = 0}^{m_1} (-1)^{n}\binom{m_1}{n} \left[\binom{n +m_2}{m_1}-\binom{n +m_2-1}{m_1}\right]
		&=0
		\\
		\sum_{n = 0}^{m_1} (-1)^{n}\binom{m_1}{n} \binom{n +m_2}{m_1}
		-
		\sum_{n = 0}^{m_1} (-1)^{n}\binom{m_1}{n} \binom{n +m_2-1}{m_1}
		&=0.
		\end{align*}
		On the first sum we change variables so that $ n+1 $ becomes $ n $, which gives us
		\begin{align*}
		\sum_{n = 1}^{m_1+1} (-1)^{n-1}\binom{m_1}{n-1} \binom{n+m_2-1}{m_1}
		-
		\sum_{n = 0}^{m_1} (-1)^{n}\binom{m_1}{n} \binom{n +m_2-1}{m_1}
		&=0.
		\end{align*}
		We can append the case $ n = 0 $ on the first sum and the case $ n=m_1+1 $ on the second, which correspond to zero terms. Then, we can merge the two sums back into
		\begin{align*}
		-
		\sum_{n = 0}^{m_1+1} (-1)^{n}
		\left[
		\binom{m_1}{n-1}
		+
		\binom{m_1}{n}
		\right]
		\binom{n+m_2-1}{m_1}
		&=0
		\\
		-
		\sum_{n = 0}^{m_1+1} (-1)^{n}
		\binom{m_1+1}{n}
		\binom{n+m_2-1}{m_1}
		&=0.
		\end{align*}
		That is, \cref{eq:comb_sum_coupling_to_basis_minor} is also valid for $ m_1+1 $.
		In the base case $ m_1 = 1 $, the sum gives us $ 	1-1=0 $, which concludes de proof.
	\end{proof}
	\begin{proof}[Proof of \cref{lemma:comb_sum_coupling_to_basis_major}]
		For the case $ m_1 = 0,m_2 = 0 $ the sum is empty, therefore the result is $ 0 $. Consider now the case $ m_1\geq 1, m_2 = 0 $. Then, the sum consists of only the term indexed with $ n=m_1 $, which is equal to $ \frac{(-1)^{m_1}}{m_1} $. Note that we only need to study the cases with $ m_1 \leq m_2 $, since the other ones can be trivially deduced thanks to the symmetry of this expression with regard to $ m_1 $ and $ m_2 $. Therefore, to study the remaining cases $ m_1, m_2\geq 1$, we will now consider the case $ 1 \leq m_1 \leq m_2 $, without loss of generality. We multiply the expression by $ \frac{m_1!}{m_1 (m_1-1)!} $, where we have that $ m_1! = m_1 (m_1-1)! $ since we know from assumption that $ m_1 > 0 $. This gives us
		\begin{align*}
		\frac{1}{m_1}
		\sum_{n \geq m_2}^{m_1 + m_2}
		(-1)^n
		\frac{m_1!}{n (m_1-1)!}  \binom{n}{n-m_1,n-m_2, m_1+m_2-n}.
		\end{align*}
		We change variables such that $ n $ becomes $ n+m_2 $
		\begin{align*}
		\frac{1}{m_1}
		\sum_{n \geq 0}^{m_1}
		(-1)^{n+m_2}
		\frac{m_1!}{(n+m_2) (m_1-1)!}  \binom{n+m_2}{n+m_2-m_1,n, m_1-n}.
		\end{align*}
		This can be further simplified as follows
		\begin{align*}
		&\frac{(-1)^{m_2}}{m_1}
		\sum_{n \geq 0}^{m_1}
		(-1)^{n}
		\frac{m_1!}{(n+m_2)(m_1-1)!}
		\frac{(n+m_2)(n+m_2-1)!}{(n+m_2-m_1)! n!(m_1-n)!}
		\\
		&=
		\frac{(-1)^{m_2}}{m_1}
		\sum_{n \geq 0}^{m_1}
		(-1)^{n}
		\frac{m_1!}{ n!(m_1-n)!}
		\frac{(n+m_2-1)!}{(m_1-1)!(n+m_2-m_1)!}
		\\
		&=
		\frac{(-1)^{m_2}}{m_1}
		\sum_{n \geq 0}^{m_1}
		(-1)^{n}
		\binom{m_1}{n}
		\binom{n+m_2-1}{m_1-1},
		\end{align*}
		which is $ 0 $ from \cref{lemma:comb_sum_coupling_to_basis_minor}.
	\end{proof}
	\section{Intermediate results used in \cref{thm:oracle_basis_relation_oracle_to basis}} \label{apend:stirling_generalized}
	\begin{lemma}[Theorem 3 of \cite{broder1984r_sup}]
		The r-Stirling numbers of the first kind are related according to the cross-recurrence formula
		\begin{align}
		\stirstg{r}{n}{k}
		=
		r\stirstg{r+1}{n}{k+1}
		+
		\stirstg{r+1}{n}{k},
		\label{eq:stirstg1_cross_rec}
		\end{align}
		for all $ n>r\geq 0 $ and $ k\geq 0 $.
		\label{lemma:stirstg1_cross_rec}
		\hfill$ \square $
	\end{lemma}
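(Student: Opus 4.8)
The plan is to give the standard combinatorial argument for the $r$-Stirling numbers of the first kind, which is also how this identity is obtained in \cite{broder1984r_sup}; an alternative is a short induction on $n$ that applies the triangular recurrence $\stirstg{r}{n}{k} = (n-1)\stirstg{r}{n-1}{k} + \stirstg{r}{n-1}{k-1}$ (valid for $n>r$) to both sides and reduces the claim for $n$ to the claim for $n-1$ after collecting terms. I would present the combinatorial version.

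Recall that $\stirstg{r}{n}{k}$ counts the permutations of $\{1,\dots,n\}$ with exactly $k$ cycles in which $1,\dots,r$ lie in $r$ pairwise distinct cycles. Fix $n>r\ge 0$ and $k\ge 0$, and sort a permutation $\pi$ counted by $\stirstg{r}{n}{k}$ according to the cycle $C$ that contains $r+1$. Since $1,\dots,r$ occupy distinct cycles, $C$ contains at most one of them. If $C$ contains none of $1,\dots,r$, then $1,\dots,r+1$ already sit in distinct cycles, so $\pi$ is exactly one of the permutations counted by $\stirstg{r+1}{n}{k}$, and every such permutation arises this way; this case contributes $\stirstg{r+1}{n}{k}$. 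If instead $C$ contains a (necessarily unique) index $i\in\{1,\dots,r\}$, write $C$ as a cyclic word anchored at $i$, say $(i, e_1, \dots, e_p, r+1, f_1, \dots, f_q)$, and replace it by the two cycles $(i, e_1, \dots, e_p)$ and $(r+1, f_1, \dots, f_q)$, leaving all other cycles unchanged; this produces a permutation with $k+1$ cycles in which $1,\dots,r+1$ lie in distinct cycles, i.e.\ one counted by $\stirstg{r+1}{n}{k+1}$, and recording the label $i$ remembers which of the $r$ cycles of $1,\dots,r$ had absorbed $r+1$. The reverse map takes a permutation counted by $\stirstg{r+1}{n}{k+1}$ together with a choice of $i\in\{1,\dots,r\}$ and appends the cycle of $r+1$ to the end of the cycle of $i$. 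The two maps are mutually inverse, so this case contributes $r\,\stirstg{r+1}{n}{k+1}$. Summing the two cases yields \cref{eq:stirstg1_cross_rec}.

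The one point needing care is checking that the cut-and-splice operations in the second case genuinely are inverse as operations on cyclic words: once each cyclic word is anchored at a canonical element ($i$ on the merged side, $r+1$ on the split side), ``cut just before $r+1$'' exactly undoes ``append the $r+1$-cycle after the tail of $i$'s cycle'', and conversely. Everything else is routine bookkeeping of the cycle count and of the distinctness condition (including the degenerate case $r=0$, where the second case is empty, matching the coefficient $r=0$). Since the statement is quoted verbatim as Theorem~3 of \cite{broder1984r_sup}, it may also simply be invoked as cited.
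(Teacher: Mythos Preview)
Your combinatorial argument is correct and is essentially Broder's original proof of this identity. Note, however, that the paper does not supply its own proof of this lemma at all: it is simply quoted as Theorem~3 of \cite{broder1984r_sup}, followed by a remark explaining why the range can be extended from $n>r>0$ (as in Broder) to $n>r\ge 0$, using $\stirstg{0}{n}{k}=\stirstg{1}{n}{k}$ for $n>0$. Your bijective argument covers the $r=0$ case directly (the second case is vacuous), so in that sense it is slightly more self-contained than what the paper does; your closing sentence, that one may also just invoke the cited result, is exactly the paper's approach.
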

	\begin{remark}
		Note that if one takes the original form of Theorem 3 of \cite{broder1984r_sup} and manipulates it until the present form is reached, the domain obtained would be $ n>r>0 $. This can be easily extended for the case $ r=0 $ as well, since we have that $ \stirstg{0}{n}{k} = \stirstg{1}{n}{k} $ for $ n>0 $. This case was missed in the original formula because it corresponded to a singularity.
	\end{remark}
	\begin{lemma}
		For $ n>r>0$ and $ k>0 $, we have that
		\begin{align}
		\stirstg{r}{n}{k}
		=
		(n-r) \stirstg{r}{n-1}{k}
		+
		\stirstg{r-1}{n-1}{k-1}.
		\label{eq:stirstg1_cross_rec2}
		\end{align}
		\label{lemma:stirstg1_cross_rec2}
		\hfill$ \square $
	\end{lemma}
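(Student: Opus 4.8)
The plan is to derive \cref{eq:stirstg1_cross_rec2} algebraically by combining the cross-recurrence of \cref{lemma:stirstg1_cross_rec} with the basic triangular recurrence for the $r$-Stirling numbers of the first kind (Theorem~1 of \cite{broder1984r_sup}),
\begin{align*}
\stirstg{r}{n}{k} = (n-1)\stirstg{r}{n-1}{k} + \stirstg{r}{n-1}{k-1}, \qquad n > r .
\end{align*}
I would start from the right-hand side of \cref{eq:stirstg1_cross_rec2} and rewrite the term $\stirstg{r-1}{n-1}{k-1}$ using \cref{eq:stirstg1_cross_rec} under the substitution $(r,n,k)\mapsto(r-1,n-1,k-1)$, which is legitimate precisely when $n-1>r-1\geq 0$ (i.e.\ $n>r$ and $r>0$) and $k-1\geq 0$ (i.e.\ $k>0$) --- exactly the hypotheses of the lemma. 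Since $(r-1)+1=r$ and $(k-1)+1=k$, this turns $\stirstg{r-1}{n-1}{k-1}$ into $(r-1)\stirstg{r}{n-1}{k}+\stirstg{r}{n-1}{k-1}$.

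Substituting back, the right-hand side of \cref{eq:stirstg1_cross_rec2} becomes
\begin{align*}
(n-r)\stirstg{r}{n-1}{k} + (r-1)\stirstg{r}{n-1}{k} + \stirstg{r}{n-1}{k-1},
\end{align*}
and the key (elementary) observation is that the two leading coefficients combine, $(n-r)+(r-1)=n-1$, so the expression collapses to $(n-1)\stirstg{r}{n-1}{k} + \stirstg{r}{n-1}{k-1}$. Applying the triangular recurrence above (valid because $n>r$) this is exactly $\stirstg{r}{n}{k}$, which finishes the proof.

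There is essentially no hard step here; the only care needed is bookkeeping --- verifying that the shifted indices stay inside the admissibility range of \cref{eq:stirstg1_cross_rec} and that the triangular recurrence is invoked only for $n>r$. Should one wish to avoid citing the triangular recurrence as a known fact, an alternative is a double induction on $n$ (outer) and $k$ (inner), anchored at the boundary values $\stirstg{r}{r}{r}=1$ and $\stirstg{r}{n}{k}=0$ whenever $n<r$, $k<r$, or $k>n$, expanding each $r$-Stirling number via its defining recurrence; but the two-recurrence derivation above is shorter and is the one I would present.
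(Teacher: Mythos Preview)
Your proof is correct and is essentially the same as the paper's: both combine the triangular recurrence for $\stirstg{r}{n}{k}$ with the cross-recurrence of \cref{lemma:stirstg1_cross_rec} applied at the shifted indices $(r-1,n-1,k-1)$, using the identity $(n-r)+(r-1)=n-1$. The only cosmetic difference is direction --- the paper starts from $\stirstg{r}{n}{k}$, adds and subtracts $(r-1)\stirstg{r}{n-1}{k}$, and arrives at the right-hand side, whereas you start from the right-hand side and collapse it back to $\stirstg{r}{n}{k}$.
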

	\begin{proof}
		We take the recurrence relation in \cref{defi:stirling_1_generalized} and we add and subtract the term $ (r-1)\stirstg{r}{n-1}{k} $ to it. That is,
		\begin{align*}
		\stirstg{r}{n}{k} =
		(n-1) \stirstg{r}{n-1}{k}
		-
		(r-1)\stirstg{r}{n-1}{k}
		+
		(r-1)\stirstg{r}{n-1}{k} 
		+
		\stirstg{r}{n-1}{k-1}.
		\end{align*}
		The first two terms of the right hand side simplify into $ (n-r) \stirstg{r}{n-1}{k}$ while the last two, according to \cref{lemma:stirstg1_cross_rec}, simplify into $ \stirstg{r-1}{n-1}{k-1} $ whenever $ n>r>0 $ and $ k>0 $, which concludes the proof.
	\end{proof}
	\begin{lemma}
		For $ r,N,k\geq0 $, we have that,
		\begin{align}
		\sum_{n = 0}^{N}
		\frac{\stirstg{r}{n+r}{k+r}}{n!}
		=
		\frac{\stirstg{r+1}{N+r+1}{k+r+1}}{N!}
		\label{eq:sum_frac_stirstg_factorial_1}.
		\end{align}
		\label{lemma:sum_frac_stirstg_factorial_1}
		\hfill$ \square $
	\end{lemma}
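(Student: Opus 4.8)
The plan is to prove \cref{eq:sum_frac_stirstg_factorial_1} by induction on $N$, treating $r$ and $k$ as fixed parameters. For the base case $N=0$ the left-hand side is just the single term $\stirstg{r}{r}{k+r}/0! = \stirstg{r}{r}{k+r}$, and the right-hand side is $\stirstg{r+1}{r+1}{k+r+1}/0! = \stirstg{r+1}{r+1}{k+r+1}$. Both of these are diagonal $r$-Stirling numbers of the first kind: $\stirstg{r}{r}{k+r}$ is nonzero only when $k=0$ (in which case it equals $1$), and likewise for the right-hand side. So the base case reduces to checking $\stirstg{r}{r}{k+r} = \stirstg{r+1}{r+1}{k+r+1}$ for all $k\geq 0$, which follows from the standard boundary values of the $r$-Stirling numbers (both sides are $\delta_{k,0}$).

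For the inductive step, assume the identity holds for $N-1$ (with $N\geq 1$). Write
\begin{align*}
\sum_{n=0}^{N}\frac{\stirstg{r}{n+r}{k+r}}{n!}
=
\sum_{n=0}^{N-1}\frac{\stirstg{r}{n+r}{k+r}}{n!}
+
\frac{\stirstg{r}{N+r}{k+r}}{N!}
=
\frac{\stirstg{r+1}{N+r}{k+r+1}}{(N-1)!}
+
\frac{\stirstg{r}{N+r}{k+r}}{N!},
\end{align*}
using the induction hypothesis on the first sum. Multiplying through by $N!$, the goal becomes
\begin{align*}
\stirstg{r+1}{N+r+1}{k+r+1}
=
N\,\stirstg{r+1}{N+r}{k+r+1}
+
\stirstg{r}{N+r}{k+r}.
\end{align*}
This is exactly an instance of \cref{lemma:stirstg1_cross_rec2}: setting $n = N+r+1$, the parameter "$r$" there equal to $r+1$, and "$k$" there equal to $k+r+1$, the lemma reads $\stirstg{r+1}{N+r+1}{k+r+1} = (n-(r+1))\stirstg{r+1}{N+r}{k+r+1} + \stirstg{r}{N+r}{k+r}$, and $n-(r+1) = N$. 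One must check the hypotheses of \cref{lemma:stirstg1_cross_rec2}, namely $n > r+1 > 0$ and $k+r+1 > 0$; the second is automatic, and $n = N+r+1 > r+1$ holds since $N\geq 1$. The edge cases where some index is $0$ (e.g. $k+r+1$ could force degenerate behaviour, or $r=0$) should be handled separately by direct inspection using the boundary conventions, or absorbed by the earlier remark that $\stirstg{0}{n}{k}=\stirstg{1}{n}{k}$ for $n>0$.

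The main obstacle I anticipate is bookkeeping at the boundary: \cref{lemma:stirstg1_cross_rec2} is stated only for $n>r>0$ and $k>0$, so the inductive step as written applies cleanly only when $r+1>0$ (always true) but the case $r=0$ needs the cross-recurrence in a form valid there, and the case $N=1$ sits right at the threshold $n=r+2>r+1$, which is fine but worth double-checking. I would therefore first verify the identity directly for small $N$ and for $r=0$ to make sure the conventions line up, and only then run the general induction; the algebra in the step itself is a one-line application of the recurrence and should present no difficulty.
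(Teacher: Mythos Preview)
Your proposal is correct and follows essentially the same route as the paper: induction on $N$, the same base case via $\stirstg{r}{r}{k+r}=\delta_{k,0}=\stirstg{r+1}{r+1}{k+r+1}$, and the inductive step reduced to a single application of \cref{lemma:stirstg1_cross_rec2} with parameters $(\overline{n},\overline{r},\overline{k})=(N+r+1,\,r+1,\,k+r+1)$. Your edge-case worry about $r=0$ is unnecessary: since the ``$r$'' of \cref{lemma:stirstg1_cross_rec2} is $r+1\geq 1$ and its ``$k$'' is $k+r+1\geq 1$, the hypotheses $n>r>0$, $k>0$ are automatically met for every $N\geq 1$, $r\geq 0$, $k\geq 0$, exactly as the paper notes.
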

	\begin{proof}
		The proof is by induction. Assume \cref{eq:sum_frac_stirstg_factorial_1} to be satisfied for some $ N=  a-1 \geq 0$. Then,
		\begin{align*}
		\sum_{n = 0}^{a}
		\frac{\stirstg{r}{n+r}{k+r}}{n!}
		&=
		\sum_{n = 0}^{a-1}
		\frac{\stirstg{r}{n+r}{k+r}}{n!}
		+
		\frac{\stirstg{r}{a+r}{k+r}}{a!}
		\\
		&=
		\frac{\stirstg{r+1}{a+r}{k+r+1}}{(a-1)!}
		+
		\frac{\stirstg{r}{a+r}{k+r}}{a!}
		\\
		&=
		\frac{1}{a!}
		\left[
		a\stirstg{r+1}{a+r}{k+r+1}
		+
		\stirstg{r}{a+r}{k+r}
		\right].
		\end{align*}
		Consider the change of variables $ \overline{n}:= a+r+1 $, $ \overline{r} = r+1 $ and $ \overline{k} = k+r+1 $. Then, this becomes
		\begin{align*}
		\frac{1}{a!}
		\left[
		(\overline{n}-\overline{r})\stirstg{\overline{r}}{\overline{n}-1}{\overline{k}}
		+
		\stirstg{\overline{r}-1}{\overline{n}-1}{\overline{k}-1}
		\right].
		\end{align*}
		Note that the prerequisites for applying \cref{lemma:stirstg1_cross_rec2} are satisfied. That is, $ \overline{n}>\overline{r} $, $ \overline{r}>0 $ and $ \overline{k}>0 $ correspond to $ a+r+1 > r+1 $, $ r+1>0 $ and $ k+r+1>0 $ respectively. Therefore, this simplifies into
		\begin{align*}
		\frac{\stirstg{\overline{r}}{\overline{n}}{\overline{k}}}{a!}
		=
		\frac{\stirstg{r+1}{a+r+1}{k+r+1}}{a!},
		\end{align*}
		which concludes the induction step. For the base case $ N = 0 $, we have that $ \stirstg{r}{r}{k+r}
		=
		\stirstg{r+1}{r+1}{k+r+1} $, which is always satisfied since $ \delta_{r,k+r} = \delta_{r+1,k+r+1}$.
	\end{proof}
	\begin{lemma}
		For $ n\geq r\geq 0$ and $ k\geq 0 $, we have that
		\begin{align}
		\sum_{p=k}^{n-r} \binom{n-p}{r} \frac{\stirst{p}{k}}{p!}
		=
		\frac{\stirstg{r+1}{n+1}{k + r +1}}{(n-r)!}
		\label{eq:stirstg_frac_sum_1}.
		\end{align}
		\label{lemma:stirstg_frac_sum_1}
		\hfill$ \square $
	\end{lemma}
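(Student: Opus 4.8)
The plan is to prove \cref{eq:stirstg_frac_sum_1} by induction on $r$, using \cref{lemma:sum_frac_stirstg_factorial_1} to strip one unit off the superscript at each step.

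For the base case $r=0$ I would argue directly. Since $\binom{n-p}{0}=1$ for every $p$ with $0\le p\le n$, the left-hand side of \cref{eq:stirstg_frac_sum_1} equals $\sum_{p=k}^{n}\stirst{p}{k}/p!$, and because $\stirst{p}{k}=0$ for $p<k$ this is the same as $\sum_{p=0}^{n}\stirstg{0}{p}{k}/p!$, where I use $\stirstg{0}{p}{k}=\stirst{p}{k}$ (the identification highlighted in the remark after \cref{lemma:stirstg1_cross_rec}). Invoking \cref{lemma:sum_frac_stirstg_factorial_1} with its superscript parameter set to $0$ and its upper limit set to $n$ then yields $\stirstg{1}{n+1}{k+1}/n!$, which is precisely the right-hand side of \cref{eq:stirstg_frac_sum_1} for $r=0$.

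For the inductive step, assume \cref{eq:stirstg_frac_sum_1} holds with $r$ replaced by $r-1$ for all admissible arguments, and fix $n\ge r\ge 1$ and $k\ge 0$. First, \cref{lemma:sum_frac_stirstg_factorial_1} applied with upper limit $n-r$ rewrites the right-hand side of \cref{eq:stirstg_frac_sum_1} as
\begin{align*}
\frac{\stirstg{r+1}{n+1}{k+r+1}}{(n-r)!}
=
\sum_{m=0}^{n-r}\frac{\stirstg{r}{m+r}{k+r}}{m!}.
\end{align*}
Next, the induction hypothesis, used at the value $m+r-1\ge r-1$ in place of $n$, gives
\begin{align*}
\stirstg{r}{m+r}{k+r}
=
m!\sum_{p=k}^{m}\binom{m+r-1-p}{r-1}\frac{\stirst{p}{k}}{p!}.
\end{align*}
Substituting this in and interchanging the two finite sums so that $p$ runs from $k$ to $n-r$ and, for each fixed $p$, $m$ runs from $p$ to $n-r$, the right-hand side becomes
\begin{align*}
\sum_{p=k}^{n-r}\frac{\stirst{p}{k}}{p!}\sum_{m=p}^{n-r}\binom{m+r-1-p}{r-1}
=
\sum_{p=k}^{n-r}\frac{\stirst{p}{k}}{p!}\sum_{j=0}^{n-r-p}\binom{j+r-1}{r-1},
\end{align*}
and the inner sum collapses to $\binom{n-p}{r}$ by the hockey-stick identity $\sum_{j=0}^{M}\binom{j+r-1}{r-1}=\binom{M+r}{r}$. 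This is exactly the left-hand side of \cref{eq:stirstg_frac_sum_1}, which closes the induction.

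I expect the only real difficulty to be index bookkeeping: getting the substitution $n\mapsto m+r-1$ right when invoking the induction hypothesis, checking that after swapping the order of summation the lower limit for $m$ is $p$ (and that $p$ runs only up to $n-r$), and verifying that the hockey-stick evaluation lands on $\binom{n-p}{r}$ exactly, with no leftover boundary terms. It is also worth confirming separately the degenerate case $n=r$, where both sides reduce to $\delta_{k,0}$, and noting that applying \cref{lemma:sum_frac_stirstg_factorial_1} at superscript $0$ in the base case is legitimate precisely because of the extension recorded in the remark following \cref{lemma:stirstg1_cross_rec}.
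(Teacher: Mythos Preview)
Your argument is correct, but it follows a genuinely different route from the paper's. The paper does an induction on $n$: it first handles the boundary cases $n=r$ and $r=0$ (the latter via \cref{lemma:sum_frac_stirstg_factorial_1}, as you do), and for $n>r>0$ it splits $\binom{n-p}{r}=\binom{n-1-p}{r-1}+\binom{n-1-p}{r}$ by Pascal's rule, applies the induction hypothesis at $(n-1,r-1,k)$ and $(n-1,r,k)$, and then recombines the two resulting $r$-Stirling terms using the cross recurrence of \cref{lemma:stirstg1_cross_rec2}. You instead induct on $r$: you unfold the right-hand side via \cref{lemma:sum_frac_stirstg_factorial_1}, feed each summand through the hypothesis at level $r-1$, swap the finite sums, and collapse the inner sum with the hockey-stick identity. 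Your approach has the virtue of bypassing \cref{lemma:stirstg1_cross_rec2} (and hence \cref{lemma:stirstg1_cross_rec}) entirely, at the cost of invoking hockey-stick; the paper's approach trades that for the simpler Pascal split but then needs the cross recurrence to glue the pieces back together. One small remark: your appeal to the remark after \cref{lemma:stirstg1_cross_rec} in the base case is slightly misplaced, since \cref{lemma:sum_frac_stirstg_factorial_1} is already stated for $r\ge 0$; what you actually use there is the identification $\stirstg{0}{p}{k}=\stirst{p}{k}$, which the paper also relies on implicitly in its own $r=0$ case.
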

	\begin{proof}
		Consider $ n=r $. Then, the expression becomes
		\begin{align*}
		\sum_{p=k}^{0} \binom{n-p}{n} \frac{\stirst{p}{k}}{p!}
		=
		\frac{\stirstg{n+1}{n+1}{k + n +1}}{0!}.
		\end{align*}
		If $ k=0 $, the left hand side is $ \binom{n}{n}\frac{\stirst{0}{0}}{0!} =1 $. If $ k>0 $, the sum is empty so it is $ 0 $. The generalized Stirling number on the right simplifies into $ \delta_{n+1,k+n+1} $, which is one if $ k=0 $ and zero if $ k>0 $. Therefore, equality is achieved for all $ k\geq 0 $.\\
		Consider now $ r =0 $. Then, we have
		\begin{align*}
		\sum_{p=k}^{n} \binom{n-p}{0} \frac{\stirst{p}{k}}{p!}
		=
		\frac{\stirstg{1}{n+1}{k +1}}{n!}.
		\end{align*}
		If $ n\geq k $, this reduces to \cref{lemma:sum_frac_stirstg_factorial_1} (note that the missing indexes of the sum correspond to zero terms). If $ n<k $ then the left hand side is an empty sum and the Stirling number on the right is zero.
		The remaining cases that we have to prove are $ n>r>0 $, $ k\geq 0 $, which we prove by induction. Assume \cref{eq:stirstg_frac_sum_1} is satisfied for all $ (n,r,k) $ such that $ n=a-1 $ with $ n\geq r \geq 0 $ and $ k\geq 0 $. We now prove that it is satisfied for the cases $ (a,r,k) $ with $ a>r>0 $ and $ k\geq 0 $.
		Note that we have $ a-p\geq 1 $ for all $ p $ in the sum due to the fact that $ r>0 $ from assumption. Therefore, we can split 		\cref{eq:stirstg_frac_sum_1} into
		\begin{align*}
		&\sum_{p=k}^{a-r}
		\left[
		\binom{a-p-1}{r-1}
		+
		\binom{a-p-1}{r}
		\right] 
		\frac{\stirst{p}{k}}{p!}
		\\
		&=
		\sum_{p=k}^{(a-1)-(r-1)}
		\binom{(a-1)-p}{r-1}
		\frac{\stirst{p}{k}}{p!}
		+
		\sum_{p=k}^{(a-1)-r}
		\binom{(a-1)-p}{r}
		\frac{\stirst{p}{k}}{p!}.
		\end{align*}
		Note that the cases $ (a-1,r-1,k) $ and $ (a-1,r,k) $ satisfy the assumption. That is, $ a-1\geq r-1 \geq 0$ and $ a-1\geq r \geq 0 $ are true if $ a > r > 0 $. We can apply \cref{eq:stirstg_frac_sum_1} to those cases and obtain
		\begin{align*}
		\frac{1}{(a-r)!}
		\left[
		\stirstg{r}{a}{k+r}+(a-r)\stirstg{r+1}{a}{k+r+1}
		\right].
		\end{align*} 
		 From \cref{lemma:stirstg1_cross_rec2}, this simplifies into the right hand side of what we want to prove as long as $ a+1>r+1 $, $ r+1>0 $ and $ k+r+1>0 $, which are all satisfied under the current assumptions.\\
		 The base case $ n=0 $ has $ r=0 $, since $ n\geq r \geq 0 $. This was already covered by the previous cases $ n=r $ and $ r=0 $.
	\end{proof}
	\begin{lemma}
		For $ n\geq r\geq 0$ and $ k\geq 0 $, we have that
		\begin{align}
		\sum_{
			\substack{
				\mathbf{m} \geq \mathbf{1}_{k}\\
				\vert \mathbf{m} \vert \leq n - r
			}
		}
		\frac{1}{\prod_{i=1}^{k}m_i}
		\binom{n-\vert \mathbf{m} \vert}{r}
		=
		\frac{k!}{(n-r)!
		}
		\stirstg{r+1}{n+1}{ k + r +1}.
		\end{align}
		\label{lemma:stirstg_frac_sum_2}.
		\hfill$ \square $
	\end{lemma}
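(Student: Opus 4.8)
The plan is to derive this identity directly from \cref{lemma:stirstg_frac_sum_1} by inserting the closed form for the unsigned Stirling numbers of the first kind given in \cref{lemma:stirling_sums_1}.

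First I would start from the left-hand side of \cref{eq:stirstg_frac_sum_1}, namely $\sum_{p=k}^{n-r} \binom{n-p}{r} \frac{\stirst{p}{k}}{p!}$, and apply \cref{lemma:stirling_sums_1} to rewrite each term as
\begin{align*}
\frac{\stirst{p}{k}}{p!}
=
\frac{1}{k!}
\sum_{
	\substack{
		\mathbf{m} \geq \mathbf{1}_{k}\\
		\vert \mathbf{m} \vert = p
	}
}
\frac{1}{\prod_{i=1}^{k}m_i}.
\end{align*}
Substituting this into the sum over $p$ produces a double sum: an outer sum over $p$ from $k$ to $n-r$, and an inner sum over multi-indices $\mathbf{m} \geq \mathbf{1}_{k}$ with $\vert\mathbf{m}\vert = p$.

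Next I would merge the two sums into one. Since $\mathbf{m} \geq \mathbf{1}_{k}$ automatically forces $\vert\mathbf{m}\vert \geq k$, letting $p$ run over $\{k,\dots,n-r\}$ and then $\mathbf{m}$ over all multi-indices with $\vert\mathbf{m}\vert = p$ is exactly the same as letting $\mathbf{m}$ run over all multi-indices $\mathbf{m} \geq \mathbf{1}_{k}$ with $\vert\mathbf{m}\vert \leq n-r$; the factor $\binom{n-p}{r}$ then becomes $\binom{n-\vert\mathbf{m}\vert}{r}$. This gives
\begin{align*}
\sum_{p=k}^{n-r} \binom{n-p}{r} \frac{\stirst{p}{k}}{p!}
=
\frac{1}{k!}
\sum_{
	\substack{
		\mathbf{m} \geq \mathbf{1}_{k}\\
		\vert \mathbf{m} \vert \leq n - r
	}
}
\frac{1}{\prod_{i=1}^{k}m_i}
\binom{n-\vert \mathbf{m} \vert}{r}.
\end{align*}

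Finally I would equate this with the right-hand side of \cref{eq:stirstg_frac_sum_1}, which is $\frac{\stirstg{r+1}{n+1}{k+r+1}}{(n-r)!}$, and multiply both sides by $k!$ to reach the claimed formula. There is essentially no obstacle; the only points deserving a line of justification are that the index sets of the two nested sums genuinely coincide after the merge (no terms lost or spuriously added, because $\vert\mathbf{m}\vert \geq k$ is implied by $\mathbf{m} \geq \mathbf{1}_{k}$), and that the degenerate case $k = 0$ — where the left sum is just $\binom{n}{r}$ — is already covered by \cref{lemma:stirstg_frac_sum_1}, so it needs no separate treatment here.
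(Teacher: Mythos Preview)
Your proposal is correct and essentially identical to the paper's argument: both use \cref{lemma:stirling_sums_1} to pass between the multi-index sum and $\sum_{p=k}^{n-r}\binom{n-p}{r}\frac{\stirst{p}{k}}{p!}$, then invoke \cref{lemma:stirstg_frac_sum_1}. The only cosmetic difference is direction---the paper starts from the multi-index side and groups by $p=\vert\mathbf{m}\vert$, whereas you start from \cref{lemma:stirstg_frac_sum_1} and expand; the substance is the same.
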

	\begin{proof}
		Split the sum in the left hand side into the two sums
		\begin{align*}
		\sum_{p=k}^{n-r}
		\sum_{
			\substack{
				\mathbf{m} \geq \mathbf{1}_{k}\\
				\vert \mathbf{m} \vert = p
			}
		}
		\frac{1}{\prod_{i=1}^{k}m_i}
		\binom{n-\vert \mathbf{m} \vert}{r}
		&=
		\sum_{p=k}^{n-r}
		\sum_{
			\substack{
				\mathbf{m} \geq \mathbf{1}_{k}\\
				\vert \mathbf{m} \vert = p
			}
		}
		\frac{1}{\prod_{i=1}^{k}m_i}
		\binom{n-p}{r}
		\\
		&=
		\sum_{p=k}^{n-r}
		\binom{n-p}{r}
		\sum_{
			\substack{
				\mathbf{m} \geq \mathbf{1}_{k}\\
				\vert \mathbf{m} \vert = p
			}
		}
		\frac{1}{\prod_{i=1}^{k}m_i}.
		\end{align*}
		From \cref{lemma:stirling_sums_1}, this is
		\begin{align*}
		k!
		\sum_{p=k}^{n-r}
		\binom{n-p}{r}
		\frac{\stirst{p}{k}}{p!}.
		\end{align*}
		the result is now straightforward from \cref{lemma:stirstg_frac_sum_1}.
	\end{proof}
	\begin{lemma}
		For $ n\geq k \geq 0 $, we have that,
		\begin{align}
		\sum_{
			\substack{
				\mathbf{m} \geq \mathbf{1}_{k}\\
				\vert \mathbf{m} \vert \leq n
			}
		}
		1
		=
		\binom{n}{k}.
		\label{eq:sums_1_abs_leq_n}
		\end{align}	
		\label{corol:sums_1_abs_leq_n}
		\hfill$ \square $
	\end{lemma}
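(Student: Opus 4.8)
The plan is to recognize the left-hand side as counting the number of $k$-tuples of positive integers whose sum does not exceed $n$, and then to evaluate this count by a standard stars-and-bars argument after introducing a slack coordinate.

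First I would dispose of the degenerate cases. If $k=0$, the only admissible multi-index is the empty tuple, for which $\vert \mathbf{m} \vert = 0 \leq n$, so the left-hand side equals $1 = \binom{n}{0}$. If $n=k$, the constraints $\mathbf{m} \geq \mathbf{1}_k$ and $\vert \mathbf{m} \vert \leq k$ force $\mathbf{m} = \mathbf{1}_k$, so again the left-hand side equals $1 = \binom{k}{k}$.

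For the general case $n > k \geq 1$, I would introduce the slack coordinate $m_{k+1} := n - \vert \mathbf{m} \vert$, which is a nonnegative integer, thereby putting the summation index in bijection with the $(k+1)$-tuples $(m_1,\dots,m_{k+1})$ satisfying $m_i \geq 1$ for $1 \leq i \leq k$, $m_{k+1} \geq 0$, and $\sum_{i=1}^{k+1} m_i = n$. Shifting by setting $\tilde m_i := m_i - 1$ for $1 \leq i \leq k$ and $\tilde m_{k+1} := m_{k+1}$ turns this into the set of all $(k+1)$-tuples of nonnegative integers with $\sum_{i=1}^{k+1} \tilde m_i = n - k$. By the stars-and-bars formula the number of such tuples is $\binom{(n-k)+(k+1)-1}{(k+1)-1} = \binom{n}{k}$, which is exactly the right-hand side.

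The argument has no real obstacle; the only point requiring a little care is checking that the slack-variable substitution is genuinely a bijection onto the claimed index set and that the boundary cases are consistent. Alternatively, the identity follows by induction on $n$ from Pascal's rule $\binom{n}{k} = \binom{n-1}{k} + \binom{n-1}{k-1}$, obtained by splitting the sum according to whether $m_k = 1$ (remove the last entry, giving the count for the pair $(n-1,k-1)$) or $m_k \geq 2$ (replace $m_k$ by $m_k - 1$, giving the count for $(n-1,k)$), with empty sums for out-of-range parameters interpreted as $0$; this mirrors the splitting technique already used in the proofs of \cref{lemma:s1_my_express_recursive,lemma:s2_my_express_recursive}.
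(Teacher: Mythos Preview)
Your proof is correct. The primary argument via a slack coordinate and stars-and-bars is a genuinely different route from the paper's. The paper proceeds by induction on $n$: it writes
\[
\sum_{\substack{\mathbf{m}\geq\mathbf{1}_k\\ \vert\mathbf{m}\vert\leq n+1}}1
=
\sum_{\substack{\mathbf{m}\geq\mathbf{1}_k\\ \vert\mathbf{m}\vert\leq n}}1
+
\sum_{\substack{\mathbf{m}\geq\mathbf{1}_k\\ \vert\mathbf{m}\vert = n+1}}1,
\]
and then, for $k\geq 1$, bijects the second sum with $\sum_{\mathbf{m}\geq\mathbf{1}_{k-1},\,\vert\mathbf{m}\vert\leq n}1$ by dropping the last coordinate (recovering it as $n+1-\vert\mathbf{m}\vert$), so that Pascal's rule closes the induction. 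Your alternative inductive split, on whether $m_k=1$ or $m_k\geq 2$, is also correct but is yet a third decomposition, not the paper's. Your stars-and-bars route is shorter and avoids induction entirely; the paper's approach has the minor advantage of staying entirely within the multi-index summation language used throughout the supplement, which makes the subsequent lemma (\cref{lemma:pascal_prod_sum}) a natural continuation.
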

	\begin{proof}
		The proof is by induction. Assume that the result applies for a given $ n\geq 0 $ and all $ k $ such that $ 0 \leq k\leq n $. We can split the following sum
		\begin{align*}
		\sum_{
			\substack{
				\mathbf{m} \geq \mathbf{1}_{k}\\
				\vert \mathbf{m} \vert \leq n+1
			}
		}
		1
		&=
		\sum_{
			\substack{
				\mathbf{m} \geq \mathbf{1}_{k}\\
				\vert \mathbf{m} \vert \leq n
			}
		}
		1
		+
		\sum_{
			\substack{
				\mathbf{m} \geq \mathbf{1}_{k}\\
				\vert \mathbf{m} \vert = n+1
			}
		}
		1.
		\end{align*}
		If $ k\geq1 $, we can rearrange the last sum so that we obtain
		\begin{align*}
		\sum_{
			\substack{
				\mathbf{m} \geq \mathbf{1}_{k}\\
				\vert \mathbf{m} \vert \leq n+1
			}
		}
		1
		=
		\sum_{
			\substack{
				\mathbf{m} \geq \mathbf{1}_{k}\\
				\vert \mathbf{m} \vert \leq n
			}
		}
		1
		+
		\sum_{
			\substack{
				\mathbf{m} \geq \mathbf{1}_{k-1}\\
				\vert \mathbf{m} \vert \leq n
			}
		}
		1.
		\end{align*}
		From assumption, whenever $ k\leq n $, this simplifies into $ \binom{n}{k} + \binom{n}{k-1} = \binom{n+1}{k} $. To complete the induction step we now prove the remaining cases $ k=0 $ and $ k=n+1 $. For the first one, the only valid index is the 0-tuple so the sum is always $ 1 = \binom{n+1}{0} $. For the second one the only valid index is the (n+1)-tuple of all ones so the sum is always $ 1 =\binom{n+1}{n+1} $. Therefore, the result applies to $ n+1 $ and all $ k $ such that $ 0\leq k\leq n+1 $.\\
		In the base case $ n,k=0 $, the only valid index is again the 0-tuple and the sum gives us $ 1 =\binom{0}{0} $.
	\end{proof}
	\begin{lemma}
		For $ \mathbf{M} \geq \mathbf{1}_{k} $, with $ n\geq \vert \mathbf{M} \vert $ and $ k\geq 0 $, we have that
		\begin{align}
		\sum_{
			\substack{
				\mathbf{m} \geq \mathbf{M} \\
				\vert \mathbf{m} \vert \leq n
			}
		}
		\prod_{i=1}^{k}
		\binom{m_i-1}{M_i-1}
		=
		\binom{n}{\vert \mathbf{M}\vert}.
		\label{eq:pascal_prod_sum}
		\end{align}
		\label{lemma:pascal_prod_sum}
		\hfill$ \square $
	\end{lemma}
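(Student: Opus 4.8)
The plan is to give a direct combinatorial proof, showing that both sides of \cref{eq:pascal_prod_sum} count the $\vert\mathbf{M}\vert$-element subsets of $\{1,\dots,n\}$; the right-hand side does so by definition, so the work is all on the left.

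First I would fix a tuple $\mathbf{m}$ with $\mathbf{m}\geq\mathbf{M}$ and $\vert\mathbf{m}\vert\leq n$ and form the partial sums $s_0:=0$ and $s_i:=m_1+\dots+m_i$. Since $m_i\geq M_i\geq 1$, these satisfy $0<s_1<\dots<s_k=\vert\mathbf{m}\vert\leq n$, and the interval $I_i:=\{s_{i-1}+1,\dots,s_i-1\}$ contains exactly $m_i-1$ integers. To $\mathbf{m}$ I attach an arbitrary $(M_i-1)$-element subset $T_i\subseteq I_i$ for each $i$ — there are $\prod_{i=1}^{k}\binom{m_i-1}{M_i-1}$ such choices, independently over $i$ — and read off the set $S:=\bigcup_{i=1}^{k}\bigl(T_i\cup\{s_i\}\bigr)\subseteq\{1,\dots,n\}$, which has $\vert\mathbf{M}\vert$ elements. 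Summing over all admissible $\mathbf{m}$, the left-hand side of \cref{eq:pascal_prod_sum} then counts precisely the pairs $\bigl(\mathbf{m},(T_i)_i\bigr)$.

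The heart of the argument is to check that the map $\bigl(\mathbf{m},(T_i)_i\bigr)\mapsto S$ is a bijection onto the $\vert\mathbf{M}\vert$-subsets of $\{1,\dots,n\}$. The inverse takes $S$, lists its elements in increasing order, cuts the list into $k$ consecutive blocks of sizes $M_1,\dots,M_k$, sets $s_i$ to the maximum of block $i$ (with $s_0:=0$), $m_i:=s_i-s_{i-1}$, and $T_i$ to block $i$ with its maximum removed. Since block $i$ consists of $M_i$ distinct integers lying in $\{s_{i-1}+1,\dots,s_i\}$, we get $m_i\geq M_i$, and $s_k\leq n$ forces $\vert\mathbf{m}\vert\leq n$; thus the output tuple is admissible, $T_i\subseteq I_i$ with $\vert T_i\vert=M_i-1$, and a short check shows the two constructions are mutually inverse. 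This yields \cref{eq:pascal_prod_sum}.

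I expect the only real care to be needed in the boundary conventions, not in any computation: when $k=0$ the sum has the empty tuple as its unique term and reads $1=\binom{n}{0}$, and one should note that $\binom{m_i-1}{M_i-1}=0$ exactly when $I_i$ is too small to hold $T_i$, so no spurious terms appear. If a self-contained inductive proof is preferred instead, an alternative is induction on $k$: peel off the last coordinate $m_k$, apply \cref{eq:pascal_prod_sum} for $k-1$ to the inner sum over $(m_1,\dots,m_{k-1})$ with bound $n-m_k$ (the inner sum vanishing exactly when $\binom{n-m_k}{\vert\mathbf{M}\vert-M_k}=0$), and reduce to the single Vandermonde-type identity $\sum_{j}\binom{j-1}{M_k-1}\binom{n-j}{\vert\mathbf{M}\vert-M_k}=\binom{n}{\vert\mathbf{M}\vert}$, which follows from a short induction using Pascal's rule; but the bijective argument above is the one I would write up.
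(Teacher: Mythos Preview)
Your bijective argument is correct, and it is a genuinely different route from the paper's. The paper does not construct a bijection with subsets of $\{1,\dots,n\}$; instead it invokes \cref{corol:sums_1_abs_leq_n} twice. First it replaces each factor $\binom{m_i-1}{M_i-1}$ by the count of multi-indices $\overline{\mathbf{m}}_i \geq \mathbf{1}_{M_i}$ with $\vert \overline{\mathbf{m}}_i \vert = m_i$, then it distributes the product over these inner sums, concatenates the $\overline{\mathbf{m}}_i$ into a single multi-index $\overline{\mathbf{m}} \geq \mathbf{1}_{\vert \mathbf{M} \vert}$ with $\vert \overline{\mathbf{m}} \vert \leq n$, and finally applies \cref{corol:sums_1_abs_leq_n} once more to obtain $\binom{n}{\vert \mathbf{M}\vert}$. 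So the paper's proof is really a sum rearrangement that passes through compositions rather than subsets.

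The two arguments are close cousins under the standard correspondence between $k$-subsets of $\{1,\dots,n\}$ and compositions indexed by $\mathbf{1}_k$ with total at most $n$ (via partial sums); in effect your map $(\mathbf{m},(T_i)_i)\mapsto S$ is the subset-side incarnation of the paper's concatenation of multi-indices. What your approach buys is self-containment: it does not rely on \cref{corol:sums_1_abs_leq_n} at all and gives a one-shot explanation of why the answer is $\binom{n}{\vert \mathbf{M}\vert}$. What the paper's approach buys is uniformity with the surrounding lemmas, which are all phrased in the same multi-index summation language, so no new combinatorial objects need to be introduced. Your alternative inductive sketch, reducing to the Vandermonde-type identity $\sum_j \binom{j-1}{M_k-1}\binom{n-j}{\vert\mathbf{M}\vert-M_k}=\binom{n}{\vert\mathbf{M}\vert}$, is also valid and would work as a third independent proof.
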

	\begin{proof}
		Using \cref{corol:sums_1_abs_leq_n}, the left hand side can be expanded into
		\begin{align*}
				\sum_{
			\substack{
				\mathbf{m} \geq \mathbf{M} \\
				\vert \mathbf{m} \vert \leq n
			}
		}
		\prod_{i=1}^{k}
		\sum_{
			\substack{
				\overline{\mathbf{m}}_i \geq \mathbf{1}_{M_i-1}\\
				\vert \overline{\mathbf{m}}_i \vert \leq m_i-1
			}
		}
		1.
		\end{align*}
		The inner sum can be rearranged such that we get
		\begin{align*}
		\sum_{
			\substack{
				\mathbf{m} \geq \mathbf{M} \\
				\vert \mathbf{m} \vert \leq n
			}
		}
		\prod_{i=1}^{k}
		\sum_{
			\substack{
				\overline{\mathbf{m}}_i \geq \mathbf{1}_{M_i}\\
				\vert \overline{\mathbf{m}}_i \vert = m_i
			}
		}
		1.
		\end{align*}
		Distributing the product over the inner sum gives us
		\begin{align*}
		\sum_{
			\substack{
				\mathbf{m} \geq \mathbf{M} \\
				\vert \mathbf{m} \vert \leq n
			}
		}
		\sum_{
			\substack{
				\overline{\mathbf{m}}_1 \geq \mathbf{1}_{M_1}\\
				\ldots\\
				\overline{\mathbf{m}}_k \geq \mathbf{1}_{M_k}\\
				\vert \overline{\mathbf{m}}_1 \vert = m_1 \\
				\ldots\\
				\vert \overline{\mathbf{m}}_k \vert = m_k
			}
		}
		1.
		\end{align*}
		Note that we can completely remove the dependence on $ \mathbf{m} $ in this expression. In particular, note that for every $ i $ such that $ 1 \leq i\leq k $, we have that $ \overline{\mathbf{m}}_i \geq \mathbf{1}_{M_i} $. Then, $ \vert \overline{\mathbf{m}}_i \vert \geq M_i $. Since we also have that $ \vert \overline{\mathbf{m}}_i \vert = m_i$, this implies that $ m_i \geq M_i $ for all $ i $. Therefore, the expression $ \mathbf{m} \geq \mathbf{M} $ is redundant. Moreover, we can replace $ \vert \mathbf{m} \vert \leq n $ by $ \vert \overline{\mathbf{m}}_1 \vert + \ldots + \vert \overline{\mathbf{m}}_k \vert \leq n $. Defining $ \overline{\mathbf{m}} $ as the concatenation of $ \overline{\mathbf{m}}_1 , \ldots , \overline{\mathbf{m}}_k$, the expression simplifies into
		\begin{align*}
		\sum_{
			\substack{
				\overline{\mathbf{m}}\geq \mathbf{1}_{\vert \mathbf{M}\vert}\\
				\vert \overline{\mathbf{m}} \vert \leq n
			}
		}
		1.
		\end{align*} 
		Since $ n\geq \vert \mathbf{M} \vert $ from assumption and $ \vert \mathbf{M}\vert \geq k \geq 0 $, the result is now immediate from applying \cref{corol:sums_1_abs_leq_n} again.
	\end{proof}
	\begin{proof}[Proof of \cref{thm:sum_prod_comb_stirstg}]
		We first split the sum of the left hand side and reorganize it as
		\begin{align*}
		\sum_{
			\substack{
				\mathbf{p} \geq \mathbf{1}_{ r}\\
				\vert \mathbf{p} \vert \leq n - \vert \mathbf{M} \vert
			}
		}
		\frac{1}{\prod_{j=1}^{r}p_j}
		\left[
		\sum_{
			\substack{
				\mathbf{m} \geq \mathbf{M} \\
				\vert \mathbf{m} \vert \leq n - \vert \mathbf{p} \vert
			}
		}
		\prod_{i=1}^{k}
		\binom{m_i-1}{M_i-1}
		\right].
		\end{align*}
		Using \cref{lemma:pascal_prod_sum}, this simplifies into
		\begin{align*}
		\sum_{
			\substack{
				\mathbf{p} \geq \mathbf{1}_{r}\\
				\vert \mathbf{p} \vert \leq n - \vert \mathbf{M} \vert
			}
		}
		\frac{1}{\prod_{j=1}^{r}p_j}
		\binom{n-\vert \mathbf{p} \vert}{\vert \mathbf{M} \vert}.
		\end{align*}
		The result now follows from \cref{lemma:stirstg_frac_sum_2}.
	\end{proof}

	\bibliographystyle{siamplain}
	\bibliography{references_sup}